\DeclareFontFamily{U}{tipa}{}
\DeclareFontShape{U}{tipa}{m}{n}{<->tipa10}{}
\newcommand{\ark@char}{{\usefont{U}{tipa}{m}{n}\symbol{62}}}%
\newcommand{\ark}[1]{\mathpalette\ark@arc{$#1$}}
\newcommand{\ark@arc}[2]{%
	\sbox0{$\m@th#1#2$}%
	\vbox{
		\hbox{\resizebox{\wd0}{\height}{\ark@char}}
		\nointerlineskip
		\box0
	}%
}
\newcommand{\R}{\ensuremath{\mathbb{R}}}
\newcommand{\N}{\ensuremath{\mathbb{N}}}
\newcommand{\Z}{\ensuremath{\mathbb{Z}}}
\newtheorem {theorem} {Theorem} 
\newtheorem {proposition} {Proposition}
\newtheorem {definition} {Definition}
\newtheorem {remark} {Remark}
\newtheorem {example} {Example}
\newtheorem {corollary} {Corollary}
\begin{document}
	\renewcommand{\arraystretch}{1.5}
	
	\title[Entropy and Gurevich Pressure for piecewise smooth vector fields]
	{Entropy and Gurevich Pressure for piecewise smooth vector fields} 
\author[M. Florentino ]
{Marco  Florentino  }

\address{Instituto de Ciências Puras e Aplicadas-UNIFEI, CEP  35903-087, Rua Irm\~{a} Ivone Drumond, 200, Distrito Industrial II, Itabira, Minas Gerais, Brazil} \email{marcoflorentino@unifei.edu.br}



\subjclass[2020]{37C05, 37C15, 37C40, 34F05, 34C28}

\keywords{piecewise smooth vector fields, Filippov systems, symbolic dynamics, Gurevich entropy, subshifts on infinite alphabets}

\maketitle

\begin{abstract}
In this paper we provide a bridge between classical results concerning discrete dynamical systems and dynamical systems governed by nonsmooth vector fields. In fact, we obtain a set of piecewise smooth vector field trajectories where the time-one map is well defined. Afterwards, we obtain a topological conjugacy between the itinerary of a trajectory contained in this set and the one-sided shift space with a set of states countable with the discrete topology. We then use this fact to calculate the Gurevich pressure and entropy for non-smooth vector fields, and we also use the Lyapunnov function to estimate the mixing time.  Furthermore, we relate the topological entropy at infinity of the global trajectories of these non-smooth vector fields to the Hausdorff dimension of the set of recurrent points that escape on average. Finally, we approach it from the point of view of cofinite topology, thus obtaining relevant examples.
	\end{abstract}

\vspace{1cm}


\section{Introduction}
\label{intro}

The main goal of this paper is to formalize and obtain a way to calculate entropy and Gurevich pressure for piecewise smooth fields (PSVFs for short). PSVFs considered here does not present uniqueness of trajectories. Therefore, it is possible that a lot of trajectories are passing through a specific point of the phase space (see Definition \ref{def2} for the way that the trajectories are obtained). Assuming  that a particle passing through such a point has certain probabilities in following each one of the trajectories, this generates a kind of stochastic process when we consider what is the trajectory of a particle departing from such points.. In this way, it is possible to use symbolic dynamics in infinite alphabets to find out what happens to the trajectories of these fields, as long as there is an application between these two “worlds” that preserves certain properties.

Symbolic dynamical systems in finite alphabets are mathematical objects that provide a wealth of examples and have greatly influenced theoretical development in dynamical systems. In computer science, for example, topological Markov shift generated by finite graphs model the evolution of transition systems. The most important numerical invariant of dynamical systems is topological entropy. For these systems, entropy is equal to the exponential growth rate of the number of finite words of fixed length. In the case of a topological Markov shift, the entropy is equal to the natural logarithm of the spectral radius of the generating graph. Considering the graph as a linear map, the spectral radius measures the rate of dilation under the iterated application. On an exponential scale, this rate is equal to the growth in the number of finite words. In addition to theoretical interest, there is great interest in the field of computing due to the growing importance of infinite state systems.

A topological Markov shift  with a countable infinite state space is more complicated than a topological Markov shift with a finite state space, and inevitably, any attempt to study symbolic dynamics in infinite alphabets has to deal with the fact that the discrete topology employed in finite case, leads us to shift spaces that are not compact. Furthermore, such a topological Markov shift does not necessarily have a stationary distribution. Even if a stationary distribution exists and the topological Markov shift is mixing, there is no general algorithm for estimating mixing rates, however, it is possible to use a Lyapunov function to establish the existence of a steady-state distribution and estimate mixing rates. To obtain the formal results, we construct a topological Markov shift with a countable infinite state space formed by the trajectory arcs of a global trajectory of a PSVF. The topological Markov shift to be considered will be obtained using the time-one map. A huge objective is to prove the conjugacy between the time-one map of a PSVF and a one-sided shift and thus obtain the topological Markov shift. However, in a first view, a time-one map is not well defined in this context because, if we depart from a point where more than one trajectory is passing through, where will we stay after $t = 1$ ? We can stay either at a point of one trajectory or at a point of
the another one and in this way, the same trajectory could be mapped in more than one symbol. In order to address this issue, which is the most significant obstacle to overcome, we examine a quotient space and define a metric on it when using discrete topology. This approach produces a metric space that is not compact that exhibits a well-defined time-one map and topological Markov shifts under certain conditions for the PSVFs being considered.

The paper is organized as follows: in Section \ref{secao teoria basica CVSPs} we present the basic notions about PSVFs. In Section \ref{two-sided topological Markov shifts} we present the basic notions about two-sided topological Markov shifts. Next, in Section \ref{conjugacy}, we construct a PSVF and show that the quotient space of all global trajectories is topologically conjugate by the time-one map induced by a (sub)shift. In Section \ref{secao resultados principais} we present the main results of the article, which are: Theorem \ref{teo1} and its Corollary \ref{cor1} , where we establish conditions for obtaining the Gurevich pressure and entropy for the aforementioned PSVF. The Theorem \ref{teo2} and its Corollary \ref{cor2} show the conditions to obtain weak mixing, strong mixing and ergodicity for the aforementioned PSVF. Furthermore, the Theorem \ref{teoo3} establishes an upper limit for the mixing time, using a Lyapunov function. Finally, we present the Theorem \ref{teo-princ} and the Corollary \ref{cor-teo5}, showing that the canonical forms presented in the previous results represent a larger class of PSVFs conjugated to the mentioned (sub)shifts. In Section \ref{secao prova resultados principais} these results are proven. In  Section \ref{hausdorf-dim} we will relate the topological entropy at infinity of the global trajectories of $Z_{\infty}$ with the Hausdorff dimension of the set of recurrent points that escape on average. Finally in Section \ref{cofinity-topology}, we approach it from the point of view of cofinite topology, which as we know is not metrizable, but we still achieve a topological conjugacy, thus generating interesting examples for calculating Gurevich entropy. The Appendix \ref{apen} is dedicated to some technical (and already known) questions about stochastic processes and topological Markov shift used throughout the article.

\section{Preliminaries}\label{secao teoria basica CVSPs}
			
	Let $V$ be an open set of $\mathbb{R}^n$. Consider a manifold $\Sigma \subset V$ of codimension $1 $ in $\mathbb{R}^n$ given by $\Sigma = f^{-1} (0)= \left\lbrace q \in V : f(q) = 0 \right\rbrace $, where $f : V \rightarrow \mathbb{R} $ is smooth having $0 \in \mathbb{R}$ as a regular value (that is, $\nabla f(p) \neq 0$, for $p \in f^{-1} (0)$). We call $\Sigma$ a switching manifold whose boundary separates the regions $\Sigma^{+} = \left\lbrace q \in V : f(q) \geq 0 \right\rbrace $ and $\Sigma^{- } = \left\lbrace q \in V : f(q) \leq 0 \right\rbrace $.
				
Call $\mathfrak{X}^r$ the space of the $C^r$-vector fields in $V \subset \mathbb{R}^n$ endowed with the
$C^r$-topology, with $r \geq 1$ large enough for our purposes. Call $\mathcal{Z}^r$ the space of PSVFs $Z : V \rightarrow \mathbb{R}^n $ such that
				
	\begin{equation}
\label{sis}
	Z (q) = \left\{ \begin{array}{c}
	X^{+} (q) \, \, if \,\, q \, \in \Sigma^{+} \\
	X^{-} (q) \, \, if \,\, q \, \in \Sigma^{-}
	\end{array}, \right. 
	\end{equation}
where $X^{+} = \left( X_{1}^{+}, X_2^{+}, \cdots , X_n^{+} \right) $, $X^{-} = \left( X_1^{-}, X_2^{-}, \cdots , X_n^{-} \right) \in \mathfrak{X}^r .$ 
				
We denote \eqref{sis} simply by $Z = (X^{+}, X^{-})$ when there is no confusion about the switching manifold. Note that $Z$ is multi-valued in $\Sigma$. We equip $\mathcal{Z}^r$ with the product topology, i.e., $$\parallel Z \parallel_{C^r} = max \left\lbrace \mid X^{+} \mid_{C^r}, \mid X^{-} \mid_{C^r} \right\rbrace , $$ where $\mid \cdot \mid_{C^r}$ denotes the classical $C^r$-norm of the smooth vector fields $X^{+}$ and $X^{-}$ restricted to $\Sigma^{+}$ and $\Sigma^{-}$, respectively.
				
In order to define rigorously the flow of  $Z$ passing through a point $p \in V$, we distinguish whether this point is at $\Sigma^{\pm} \setminus \Sigma$ or $\Sigma$. For the first two regions, the local trajectory is defined by $X^{+}$ and $X^{-}$ respectively, as usual, but for $\Sigma$ we rely on the contact between the vector fields $X^{+},$ $X^{-}$ and $\Sigma$ characterized by the Lie derivative $X^{+} f(q) = \left\langle \nabla f(q), X^{+}(q) \right\rangle $, where $\left\langle \cdot, \cdot \right\rangle $ is the usual inner product. We also use higher order derivatives given by $(X^{+})^k f = X^{+} ((X^{+})^{k-1} f) = \left\langle \nabla (X^{+})^{k-1} f, X^{+} \right\rangle$, with $k > 1$ a positive integer.  
				
A \textbf{Crossing Region } is defined by $\Sigma^{c} = \left\lbrace p \in \Sigma \mid X^{+}f(q) X^{-}f(q) > 0 \right\rbrace $. In addition we denote  $$\Sigma^{c^+} = \left\lbrace p \in \Sigma \mid X^{+}f(q) > 0,  X^{-}f(q) > 0 \right\rbrace \; \textrm{and}\;\Sigma^{c^-} = \left\lbrace p \in \Sigma \mid X^{+}f(q) < 0,  X^{-}f(q) < 0 \right\rbrace .$$
This region $\Sigma^c$ is relatively open in $\Sigma$ and their definition exclude the points where \linebreak $X^{+}f(q) X^{-}f(q) = 0$. These points are on the boundary of this region.
				
	\begin{remark}
	In this paper we will not deal with points where $X^{+}f(q) X^{-}f(q) < 0$ and a sliding motion will not be considered.
	\end{remark} 
				
Any $q \in \Sigma$ such that $X^{+}f(q)X^{-}f(q) = 0$ is called a boundary singularity. The boundary singularities can be of two types: $(i)$ an equilibrium of $X^{+}$ or $X^{-}$ over $\Sigma$ or $(ii)$ a point where a trajectory of $X^{+}$ or $X^{-}$ is tangent to $\Sigma$ (and it is not an equilibrium of  $X^{+}$ or $X^{-}$). In the second case, we call $q \in \Sigma$ a tangential singularity (or tangency point) and we denote the set of these points by $\Sigma^t$. If there exists an orbit of the vector field $X^{+}\mid_{\Sigma^{+}}$ (respectively $X^{-}\mid_{\Sigma^{-}}$) reaching $q \in \Sigma^t$ in a finite time such that the trajectory continues in $\Sigma^{+}$ (respectively $\Sigma^{-})$, then such tangency is called a visible tangency for $X^{+}$ (respectively $X^{-}$), otherwise we call $q$ an invisible tangency for $X^{+}$ (respectively $X^{-}$). 
				
We may also distinguish a particular tangential singularity called two-fold, which is a 
	tangency $q$ of $X^{+}$ and $X^{-}$ simultaneously (that is, $X^{+}f(q) = X^{-}f(q) = 0$) satisfying $(X^{+})^ 2f(q) (X^{-})^2f(q) \neq 0$. A two-fold is called
\begin{enumerate}
	\item[$1.$] visible-visible, if it is a visible tangency for both $X^{+}$ and $X^{-}$;
	\item[$2.$] invisible-invisible, if it is an invisible tangency for $X^{+}$ and $X^{-}$;
	\item[$3.$] visible-invisible, whether it is a visible tangency for $X^{+}$ and an invisible tangency for $X^{-}$ or vice versa.
\end{enumerate} 
				
				
				
				%

				
\textbf{In the sequel of the paper, we will consider just planar PSVFs.} In this case, we say that a tangency point $p \in V$ is singular if $p$ is an invisible-invisible tangency for both $X^{+}$ and $X^{-}$. On the other hand, a tangency point $p \in V$ is regular if it is not singular.

\begin{definition}
\label{def2}
	The local trajectory (orbit) $\phi_Z (t, p)$ of a PSVF $Z = (X^{+}, X^{-})$ through a small neighborhood of $p \in U$ is defined as follows: \begin{enumerate}
	\item[$(i)$] For $p \in \Sigma^{+}$ and $p \in \Sigma^{-}$ the trajectory is given by $\phi_Z (t, p) = \phi_{X^{+}} ( t,p)$ and $\phi_Z (t,p) = \phi_{X^{-} }(t,p)$ respectively; 
	\item[$(ii)$] For $p \in \Sigma^{c^{+}}$ and taking the origin of time at $p$ the trajectory is defined as $\phi_Z (t, p) = \phi_{X^{-}} (t,p)$ for $t \leq 0$ and $\phi_Z (t,p) = \phi_{X^{+} }(t,p)$ for $t \geq 0.$ If $p \in \Sigma^{c^{-}}$ the definition is the same, but inverting time;
\item[$(iii)$] For $p$ a point of regular tangency and taking the origin of time at $p$ the trajectory is defined as $\phi_Z (t, p) = \phi_1 (t,p)$ for $t  \leq 0$ and $\phi_Z (t,p) = \phi_2 (t,p)$ for $t  \geq 0,$ where each $\phi_1, \phi_2$ is either $\phi_{X^{+}}$ or $\phi_{X^{-}}$; 
	\item[$(iv)$] For $p \in V \subset \mathbb{R}^2$ a singular tangency point, $\phi_Z (t, p) = p$ for all $t \in \mathbb {R}.$ 
\end{enumerate}
\end{definition} 
				

\begin{definition}
	\label{def1}
 A global trajectory $\Gamma_Z (t, p_0)$ is a concatenation of local trajectories. Moreover, a maximal trajectory  is a global trajectory that can not be extended to any other global trajectories by joining local ones, that is, if $\widetilde{\Gamma_Z} $ is a global trajectory containing $\Gamma_z $ then $\Gamma_z = \widetilde{\Gamma_Z}$. In this case, we call $I = (\tau^{-}(p_0), \tau^{+} (p_0))$ the maximal interval of the solution $\Gamma_Z$. A global trajectory is a \textbf{positive} (respectively, \textbf{negative}) global trajectory if $t > 0$ (respectively, $t < 0)$ and $t_0 = 0$.
\end{definition}

\begin{remark}
We should note that the maximal interval of the solution may not cover the interval $(- \infty , \infty)$, that is, $\tau^{\pm} (p_0)$ could be finite values.
\end{remark}

\begin{definition} 
An $\infty$-homoclinic loop of a planar PSVF is a global trajectory of $Z$ presenting $\infty$ visible-visible two-fold singularities $p_1, p_2, \cdots$ in such a way that, after passes through $p_i$, the trajectory reaches $\Sigma$ either in $p_{i-1}$ or $p_{i+1}$.
\end{definition}

\begin{definition}
Two PSVFs $Z, \widetilde{Z} \in \mathcal{Z}^r$, defined in $U, \widetilde{U}$ respectively and with switching manifold $\Sigma$ and $\widetilde{\Sigma}$ are $\Sigma$-equivalent if there exists an orientation preserving homeomorphism $h : U \to \widetilde{U}$ that sends $U \cap \Sigma$ to $\widetilde{U} \cap  \widetilde{\Sigma}$ , the orbits of $X^{+}$ restricted to $U \cap \Sigma^{+}$ to the orbits of $\widetilde{X}^{+}$ restricted to $\widetilde{U} \cap \widetilde{\Sigma}^{+}$, the orbits of $X^{-}$ restricted to $U \cap \Sigma^{-}$ to the orbits of $\widetilde{X}^{-}$ restricted to $\widetilde{U} \cap \widetilde{\Sigma^{-}}$. 
 
\end{definition}

In this paper, the smooth vector fields $X^{\pm}$ satisfy  $div(X^{\pm}) = 0 $ in $\Sigma^{\pm}$. In other words, they preserve the volume measure in $\Sigma^{\pm}$ (see \cite{viana}). Therefore, by the Corollary A of \cite{nova}, the PSVF $Z=(X^{+},X^-)$ preserves the volume measure, that is, the Lebesgue measure, here denoted by $med$.

\subsection{The set of all trajectories of PSVFs and the time-one map}


Let $Z = \left(X^{+}, X^{-} \right) $ be a PSVF defined over a compact $2$-dimensional surface $M$ and $\Omega = \{ \textrm{positive global}\\ \mbox{trajectory of} \, Z\}$.  Consider the function
$\rho : \Omega \times \Omega \to \R$ given by
\begin{equation*}
	\rho(\gamma_1,\gamma_2)=\underset{i \in \N }{\sum} \frac{1}{2^i }\int_{i}^{i+1} \mid\gamma_1(t)-\gamma_2(t)\mid\,dt,
\end{equation*}	
where $\mid\gamma_1(t)-\gamma_2(t)\mid$ denotes the distance between the points $\gamma_1(t)$ and $\gamma_2(t)$.

\begin{proposition}
	The space $(\Omega, \rho)$ is a metric space.
\end{proposition}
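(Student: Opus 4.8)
The plan is to verify the three metric-space axioms for $\rho$ on $\Omega$: non-negativity with identity of indiscernibles, symmetry, and the triangle inequality. Symmetry is immediate from the symmetry of the absolute value $|\gamma_1(t)-\gamma_2(t)|$ inside the integrand. For the triangle inequality, I would fix three positive global trajectories $\gamma_1,\gamma_2,\gamma_3$ and apply, for each fixed $t$, the triangle inequality in $\R^n$ (or on the surface $M$, using the ambient distance): $|\gamma_1(t)-\gamma_3(t)| \le |\gamma_1(t)-\gamma_2(t)| + |\gamma_2(t)-\gamma_3(t)|$. Integrating over each interval $[i,i+1]$, multiplying by $2^{-i}$, and summing over $i\in\N$ preserves the inequality term by term, giving $\rho(\gamma_1,\gamma_3)\le\rho(\gamma_1,\gamma_2)+\rho(\gamma_2,\gamma_3)$.

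Before any of that, though, I must check that $\rho$ is actually well defined and finite, i.e. that the series converges. Since $M$ is compact, it has finite diameter $D<\infty$, so $|\gamma_1(t)-\gamma_2(t)|\le D$ for all $t$, whence each integral $\int_i^{i+1}|\gamma_1(t)-\gamma_2(t)|\,dt$ is bounded by $D$, and $\rho(\gamma_1,\gamma_2)\le D\sum_{i\in\N}2^{-i}=2D<\infty$ (or $D$, depending on whether $\N$ includes $0$). This also shows each term is non-negative, hence $\rho\ge 0$.

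The one genuinely delicate point — and the step I expect to be the main obstacle — is the identity of indiscernibles: $\rho(\gamma_1,\gamma_2)=0$ should force $\gamma_1=\gamma_2$ as elements of $\Omega$. From $\rho(\gamma_1,\gamma_2)=0$ and non-negativity of every term, each $\int_i^{i+1}|\gamma_1(t)-\gamma_2(t)|\,dt=0$, so $\gamma_1(t)=\gamma_2(t)$ for Lebesgue-almost every $t\ge 0$. To upgrade this to $\gamma_1(t)=\gamma_2(t)$ for \emph{all} $t\ge 0$, I would invoke the regularity of global trajectories: by Definition \ref{def1} a global trajectory is a concatenation of local trajectories, each of which is a smooth flow arc of $X^+$ or $X^-$ (or a constant, in the singular-tangency case of Definition \ref{def2}(iv)); in particular $t\mapsto\gamma_j(t)$ is continuous on $[0,\tau^+(p_0))$. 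Two continuous functions agreeing almost everywhere agree everywhere on the interior of their common domain, and by continuity on the closure; one should also note that $\gamma_1(0)=\gamma_2(0)$ since both are positive global trajectories with $t_0=0$ and the starting point is pinned down by the a.e. agreement extended by continuity. The only subtlety to address is that the parametrizing maximal intervals could differ; but equality on a full-measure subset of $[0,\infty)$ forces the intervals to coincide and the trajectories to be literally the same curve with the same parametrization, hence equal in $\Omega$. With these four points in hand the proof is complete.
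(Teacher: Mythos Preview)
Your proposal is correct and follows essentially the same approach as the paper: finiteness from compactness of $M$, symmetry from $|\cdot|$, the triangle inequality termwise, and identity of indiscernibles from the vanishing of each integral. In fact you are more careful than the paper on the last point: the paper simply asserts that $\int_i^{i+1}|\gamma_1(t)-\gamma_2(t)|\,dt=0$ for all $i$ implies $\gamma_1(t)=\gamma_2(t)$ for all $t$, whereas you correctly pass through the intermediate ``a.e.'' conclusion and then invoke continuity of global trajectories (as concatenations of smooth flow arcs) to upgrade to equality everywhere.
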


\begin{proof}
Let $\gamma_1, \gamma_2 \in \Omega$. Observe that M being compact, implies $|\gamma(t)-\gamma_2(t)|$ is bounded for all $\in \R$, thus the series above converges for any $\gamma_1, \gamma_2$.	If $\rho(\gamma_1, \gamma_2) = 0$ then $\int_{i}^{i+1} |\gamma_1(t) - \gamma_2(t)|\, dt = 0$ for all $i \in \N$ which implies $\gamma_1(t) = \gamma_2(t)$ for all $t \in \R$ and therefore $\gamma_1 = \gamma_2.$	The fact $\rho(\gamma_1, \gamma_2)= \rho(\gamma_2, \gamma_1)$ follows immediately from $|\gamma_1(t) - \gamma_2(t)| =
|\gamma_2(t) - \gamma_1(t)|$. And, finally, for the triangle inequality part it is enough to notice that
	$|\gamma_1(t) - \gamma_3(t)| \leq |\gamma_1(t) - \gamma_2(t)|+|\gamma_2(t) - \gamma_3(t)|$ for all $t \in \R$ gives the inequality $\rho(\gamma_1, \gamma_3) \leq \rho(\gamma_1, \gamma_3) + \rho(\gamma_2, \gamma_3).$
\end{proof}

 Consider the map: 
\begin{equation}
	\label{acao-grupo}	
	\left. 
	\begin{array}{cc}
		T : \mathbb{R} \times \Omega \to  \Omega\\
		\quad \quad \quad \quad \quad \quad \quad \quad \quad (t,\gamma)\mapsto T(t,\gamma)(\cdot) = \gamma(\cdot + t) .
	\end{array}
	\right. 
\end{equation}	

Then we have the time-one  map  $T_1(\gamma) = \gamma( \cdot + 1).$  

\begin{proposition}
	The map $T_1 : \Omega \to \Omega$ defined above is continuous.
\end{proposition}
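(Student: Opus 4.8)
The plan is to establish something stronger than mere continuity, namely that $T_1$ is Lipschitz with respect to $\rho$ with Lipschitz constant at most $2$; continuity then follows at once. Concretely, I will show that
\[
\rho\bigl(T_1\gamma_1,T_1\gamma_2\bigr)\ \le\ 2\,\rho(\gamma_1,\gamma_2)\qquad\text{for all }\gamma_1,\gamma_2\in\Omega ,
\]
and the whole argument amounts to a change of variables in the integrals defining $\rho$. Given this estimate, for $\varepsilon>0$ one simply takes $\delta=\varepsilon/2$ in the definition of continuity at an arbitrary $\gamma\in\Omega$.

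For the computation, write $a_j=\int_j^{j+1}|\gamma_1(s)-\gamma_2(s)|\,ds\ge 0$ for $j\in\mathbb{N}$, so that $\rho(\gamma_1,\gamma_2)=\sum_{j\ge 0}2^{-j}a_j$. By definition,
\[
\rho\bigl(T_1\gamma_1,T_1\gamma_2\bigr)=\sum_{i\in\mathbb{N}}\frac{1}{2^{i}}\int_i^{i+1}\bigl|\gamma_1(t+1)-\gamma_2(t+1)\bigr|\,dt .
\]
Substituting $s=t+1$ in each summand converts the $i$-th integral into $\int_{i+1}^{i+2}|\gamma_1(s)-\gamma_2(s)|\,ds=a_{i+1}$, and reindexing with $j=i+1$ yields
\[
\rho\bigl(T_1\gamma_1,T_1\gamma_2\bigr)=\sum_{i\ge 0}\frac{a_{i+1}}{2^{i}}=2\sum_{j\ge 1}\frac{a_j}{2^{j}}\ \le\ 2\sum_{j\ge 0}\frac{a_j}{2^{j}}=2\,\rho(\gamma_1,\gamma_2),
\]
where the inequality is just dropping the non-negative term $a_0$. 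This proves the claimed Lipschitz bound.

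There is no genuine obstacle in this argument; the only points needing a word of justification are routine and rest on the compactness of $M$: one has $|\gamma_1(t)-\gamma_2(t)|\le \operatorname{diam}(M)$ for all $t$, so every series involved is dominated by $2\operatorname{diam}(M)$ and converges absolutely, and each integrand $t\mapsto|\gamma_1(t)-\gamma_2(t)|$ is bounded and measurable (being a concatenation of continuous pieces), so the termwise substitution and reindexing above are legitimate. I should also note that the same computation applied to $T_t$ for arbitrary $t\ge 0$ shows that every time-$t$ map of the flow $T$ of \eqref{acao-grupo} is Lipschitz for $\rho$ with a constant depending only on $\lceil t\rceil$, so in fact $T$ is continuous in the second variable uniformly on bounded time intervals, which is what will be needed later.
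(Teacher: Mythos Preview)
Your proof is correct and follows essentially the same approach as the paper: both perform the substitution $s=t+1$ to turn the $i$-th integral into the $(i+1)$-th one, reindex, and absorb the missing $j=0$ term to obtain the Lipschitz bound $\rho(T_1\gamma_1,T_1\gamma_2)\le 2\rho(\gamma_1,\gamma_2)$. Your write-up is slightly cleaner in naming the Lipschitz property explicitly and in packaging the integrals as the sequence $a_j$, but the underlying argument is identical.
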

\begin{proof}
	Note that $$\int_{i}^{i+1} \mid T_1(\gamma_1)(t)- T_1(\gamma_2)(t)\mid\,dt = \int_{i}^{i+1} \mid\gamma_1(t+1)-\gamma_2(t+1)\mid\,dt =$$ $$ \int_{i+1}^{i+2} \mid\gamma_1(t)-\gamma_2(t)\mid\,dt.$$
	Using the relation above, we obtain:
	\begin{equation*}
		\rho\left( T_1(\gamma_1)(t), T_1(\gamma_2)(t)\right) = \underset{i \in \mathbb{N}}{\sum} \frac{1}{2^{ i } }\int_{i}^{i+1} \mid T_1(\gamma_1)(t)- T_1(\gamma_2)(t)\mid\,dt =  
	\end{equation*}
	\begin{equation*}
		\underset{i \in \mathbb{N}}{\sum} \frac{1}{2^{ i } }\int_{i+1}^{i+2} \mid \gamma_1(t)- \gamma_2(t)\mid\,dt = \underset{n \to +\infty}{\lim} \left( \overset{n}{\underset{i = 0 }{\sum}} \frac{1}{2^{ i } }\int_{i+1}^{i+2} \mid \gamma_1(t)- \gamma_2(t)\mid\,dt\right) = 
	\end{equation*}
	\begin{equation*}
		\underset{n \to +\infty}{\lim} \left( 2\overset{n}{\underset{i = 0 }{\sum}} \frac{1}{2^{ i +1 } }\int_{i+1}^{i+2} \mid \gamma_1(t)- \gamma_2(t)\mid\,dt\right) =  \underset{n \to +\infty}{\lim} \left(2 \overset{n+1}{\underset{j = 1 }{\sum}} \frac{1}{2^{j} }\int_{j}^{j+1} \mid \gamma_1(t)- \gamma_2(t)\mid\,dt\right)
	\end{equation*}
	\begin{equation*}
		\leq \underset{n \to +\infty}{\lim} \left( 2\overset{n+1}{\underset{j = 1 }{\sum}} \frac{1}{2^{j} }\int_{j}^{j+1} \mid \gamma_1(t)- \gamma_2(t)\mid\,dt\right) + \int_{0}^{1} \mid \gamma_1(t)- \gamma_2(t)\mid\,dt  = 
	\end{equation*}
	\begin{equation*}
		2 \underset{n \to +\infty}{\lim} \left( \overset{n+1}{\underset{j = 1 }{\sum}} \frac{1}{2^{j} }\int_{j}^{j+1} \mid \gamma_1(t)- \gamma_2(t)\mid\,dt + \frac{1}{2} \int_{0}^{1} \mid \gamma_1(t)- \gamma_2(t)\mid\,dt \right)  = 2 \rho(\gamma_1,\gamma_2) .
	\end{equation*}
	Hence $T_1$ is continuous.
\end{proof}

\section{One-sided topological Markov shifts, Entropy Gurevich and Gurevich Pressure }
\label{two-sided topological Markov shifts}
Suppose $\mathcal{G} = \mathcal{G}(V,E)$ is a directed graph with a  countable collection of vertices $V$ and edges $E \subset V \times V$. 
The notation $a \to b$ means that $(a, b) \in E$, and the notation $a \stackrel{n}{\to} b$ means that there are vertices $i_1, \cdots i_{n-1}$, such that $a\to i_1 \to\cdots i_{n-1}\to b$. When $a$ connects to $b$ in $n$ steps, we say that it is a \textbf{path} of length $n$ and that $(a,i_1, \cdots, i_{n-1}, b) $ is \textit{admissible}. This path is called \textbf{loop} if $a = b$.

%
%
\begin{definition}
	\label{defshift}
The \textbf{one-sided topological Markov shift}  associated to $\mathcal{G}$ is the set $\mathcal{A}_{\mathcal{G}}^{\mathbb{N}} := \left\lbrace (a_i)_{i \in \mathbb{N}} \in V^{\mathbb{N}}\,; \; a_i \to a_{i+1}\;\; \mbox{for all}\; i\right\rbrace$. Given  $a = (a_j)_{j \in \mathbb{N}}$ and $b = (b_j)_{j \in \mathbb{N}}$ in $\mathcal{A}_{\mathcal{G}}^{ \mathbb{N}}$. Define $d : \mathcal{A}_{\mathcal{G}}^{\mathbb{N}} \times \mathcal{A}_{\mathcal{G}}^{\mathbb{N}} \rightarrow \mathbb{R}$ by: 
\begin{equation}
\label{metrica}
d(a, b) = \underset {i \in \mathbb{N}}{\sum} \frac{\mid a_i - b_i \mid}{2^{i}}.
\end{equation}

$d$ is a metric, which makes $\mathcal{A}_{\mathcal{G}}^{\mathbb{N}}$ a metric space. Additionally we can set the map shift  $ \sigma_{\mathcal{G}} : \mathcal{A}_{\mathcal{G}}^{\mathbb{N}} \rightarrow \mathcal{A}_{\mathcal{G}}^{\mathbb{N}} $ given by $\sigma_{\mathcal{G}} ((a_j)) = b_j$, where $b_j = a_{j+1}.$ The map  is called two-sided full shift and the discrete flow $(\mathcal{A}_{\mathcal{G}}^{\mathbb{N}}, \sigma_{\mathcal{G}})$ is called symbolic flow or shift system.
\end{definition}


$V$ is called the alphabet, and elements of $V$ are called states or letters. The set $V$ is endowed with the discrete topology, and $\mathcal{A}_\mathcal{G}^{\mathbb{N}}$ is endowed with the induced topology of $V^{\mathbb{N}}$.  The matrix $(t_{ab})_{V \times V}$, $t_{ab} = 1$ when $a \to b$ and $t_{ab} = 0$ when $a \nrightarrow b$, is called the transition matrix.
$(\mathcal{A}_{\mathcal{G}}^{\mathbb{N}}, d)$ is a complete and separable metric space. Its topology is generated  with the topology generated by the  cylinders
by the cylinders $$[a_0a_1 \cdots  a_{n-1}]_m = \left\lbrace (x_j)_{j \in \mathbb{N}} \in \mathcal{A}_{\mathcal{G}}^{\mathbb{N}}; \; \;  (x_m, \cdots, x_{m+n-1}) = (a_0, \cdots,a_{n-1})\;\; m , n \in \mathbb{N} 
\right\rbrace.$$

The cylinders are open and closed. The disjoint unions of the cylinders form an algebra that generates the $\sigma_{\mathcal{G}}$-Borel algebra  $\mathcal{B}$ in $\mathcal{A}_{\mathcal{G}}^{\mathbb{N}}.$ Since $V$ is infinite $\mathcal{A}_{\mathcal{G}}^{\mathbb{N}}$ is not compact, and sometimes not even locally compact. In fact, it can be shown that $\mathcal{A}_{\mathcal{G}}^{\mathbb{N}}$ is locally compact iff for every $a \in V , \, \# \left\lbrace b \in V\,  ; a \to b \right\rbrace  < \infty.$

\begin{definition}
A topological Markov shift $\mathcal{A}_{\mathcal{G}}^{\mathbb{N}}$ is called topologically transitive,
if for all $a, b \in V$, there is $n$ such that $a \stackrel{n}{\to} b$, and topologically mixing if for all $a, b \in V$,
there is $n$ such that $N = N(a, b)$ such that $a \stackrel{n}{\to} b$ for all $n \geq N.$
\end{definition}
 $\mathcal{A}_{\mathcal{G}}^{\mathbb{N}}$ is topologically transitive iff $\sigma_{\mathcal{G}}$ is topologically transitive (i.e. for all open sets $U, V$ there exists $n > 0$ such that $U \cap\sigma_{\mathcal{G}}^{-n}(V ) \neq \emptyset)$. Similarly, $\mathcal{A}_{\mathcal{G}}^{\mathbb{N}}$ is topologically mixing iff $\sigma_{\mathcal{G}}$ is topologically mixing (i.e. for all open sets $U, V$ there exists $N = N(U, V ) > 0$ such that $U \cap \sigma_{\mathcal{G}}^{-n}(V ) \neq \emptyset$ for all $n > N$).
\begin{definition}
Let $\mathcal{G}$ be an oriented graph, and let $a, b$ be two vertices in $\mathcal{G}$. We
define the following quantities:
\begin{enumerate}
	\item[$\bullet$] $p_{ab}^{\mathcal{G}}(n)$ is the number of paths $(u_0, u_1, \cdots u_n)$ such that $u_0 = a$ and $u_n = b$;
	\item[$\bullet$] $R_{ab}(\mathcal{G})$ is the radius of convergence of the series $\sum p_{ab}^{\mathcal{G}}(n)z^n$;
	\item[$\bullet$] $f_{ab}(\mathcal{G})$ is the number of paths $(u_0, u_1, \cdots u_n)$ such that $u_0 = a$, $u_n = b$, and
	$u_i \neq b$ for all $0 < i < n, L_{ab}(\mathcal{G})$ is the radius of convergence of the series $\sum f_{ab}(n)z^n.$
\end{enumerate}
\end{definition}

\begin{definition}
Let $\mathcal{G}$ be an oriented graph and $V$ its set of vertices. The graph $\mathcal{G}$
is \textbf{strongly connected} if, for all $a, b \in V$, there exists a path from $a$ to $b$ in $\mathcal{G}$.
The period of a strongly connected graph $\mathcal{G}$ is the greatest common divisor of $\left( p_{aa}^{\mathcal{G}}(n)\right)_{a \in V}$. The graph $\mathcal{G}$ is \textbf{aperiodic} if its period is $1$.
\end{definition}

\begin{proposition}[\cite{vere}]
Let $\mathcal{G}$ be an oriented graph. If $\mathcal{G}$ is strongly
connected, $R_{ab}(\mathcal{G})$ does not depend on $a$ and $b$, it is denoted by $R(\mathcal{G})$.
\end{proposition}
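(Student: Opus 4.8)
The plan is to show that the quantity $R_{ab}(\mathcal{G})$, defined as the radius of convergence of $\sum_n p_{ab}^{\mathcal{G}}(n) z^n$, is independent of the choice of $a,b \in V$ when $\mathcal{G}$ is strongly connected. First I would fix an arbitrary pair $(a,b)$ and a second arbitrary pair $(a',b')$ and use strong connectedness to pick a path from $a'$ to $a$ of some length $k_1$ and a path from $b$ to $b'$ of some length $k_2$; in particular $p_{a'a}^{\mathcal{G}}(k_1) \geq 1$ and $p_{bb'}^{\mathcal{G}}(k_2) \geq 1$. The key combinatorial observation is the concatenation (super-multiplicativity) inequality: any path of length $n$ from $a$ to $b$ can be prefixed by the chosen $a' \to a$ path and suffixed by the chosen $b \to b'$ path to produce a path of length $n + k_1 + k_2$ from $a'$ to $b'$, and distinct $a$-to-$b$ paths give distinct $a'$-to-$b'$ paths. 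Hence
\begin{equation*}
p_{a'b'}^{\mathcal{G}}(n + k_1 + k_2) \geq p_{ab}^{\mathcal{G}}(n) \quad \text{for all } n.
\end{equation*}

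From this inequality I would extract the comparison of radii of convergence. Recall that for a power series with nonnegative coefficients $c_n$, the radius of convergence is $R = \left( \limsup_n c_n^{1/n} \right)^{-1}$. Applying this to both sides of the displayed inequality and noting that shifting the index by the fixed constant $k_1 + k_2$ does not affect the $\limsup$ of the $n$-th root (since $(n+k_1+k_2)/n \to 1$), we get $\limsup_n \big( p_{a'b'}^{\mathcal{G}}(n) \big)^{1/n} \geq \limsup_n \big( p_{ab}^{\mathcal{G}}(n) \big)^{1/n}$, i.e. $R_{a'b'}(\mathcal{G}) \leq R_{ab}(\mathcal{G})$. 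Since $(a,b)$ and $(a',b')$ were arbitrary, swapping their roles gives the reverse inequality $R_{ab}(\mathcal{G}) \leq R_{a'b'}(\mathcal{G})$, and therefore equality. This common value is what we denote $R(\mathcal{G})$.

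The only genuinely delicate point is making the index-shift argument on the $\limsup$ airtight, in particular handling the degenerate cases: if $p_{ab}^{\mathcal{G}}(n) = 0$ for infinitely many $n$ (which can happen even in a strongly connected graph with nontrivial period) one should phrase the $\limsup$ comparison carefully, and if $p_{ab}^{\mathcal{G}}(n)$ grows so slowly that $\limsup_n (p_{ab}^{\mathcal{G}}(n))^{1/n} = 0$ then both radii are infinite and there is nothing to prove. In all nondegenerate cases the bound $p_{a'b'}^{\mathcal{G}}(n+k_1+k_2) \geq p_{ab}^{\mathcal{G}}(n) \geq 1$ along an appropriate subsequence forces the two exponential growth rates to coincide. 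I expect this bookkeeping with the $\limsup$ to be the main (though still routine) obstacle; the combinatorial heart of the argument — path concatenation using strong connectedness — is immediate.
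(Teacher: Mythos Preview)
Your argument is correct and is precisely the standard proof of this fact (path concatenation via strong connectedness, followed by the Cauchy--Hadamard formula and the observation that a fixed index shift does not affect $\limsup_n c_n^{1/n}$). Note, however, that the paper does not supply its own proof of this proposition: it is stated with a citation to Vere-Jones \cite{vere} and left unproved, so there is no in-paper argument to compare against. Your proposal fills that gap with exactly the argument one finds in the original source and in standard treatments of countable Markov shifts.
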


%
%



\begin{definition}
If $\mathcal{G}$ is a countable graph, the \textbf{Gurevich entropy}  of the topological Markov shift $\mathcal{A}_{\mathcal{G}}^{\mathbb{N}}$ (or graph $\mathcal{G}$)  is given by
\begin{equation*}
h_G(\mathcal{A}_{\mathcal{G}}^{\mathbb{N}}, \sigma_{\mathcal{G}}):= \left\lbrace h_{top}\left( \mathcal{A}_{\mathcal{H}}^{\mathbb{N}}, \sigma_{\mathcal{H}}\right); \sigma_{\mathcal{H}} = \sigma_{\mathcal{G}}|_{\mathcal{H}}, \mathcal{H} \subset \mathcal{G}\, \mbox{finite}  \right\rbrace .
\end{equation*}
\end{definition}

This entropy can also be computed in a combinatorial way, as the exponential
growth of the number of paths with fixed endpoints.
\begin{proposition}[\cite{gurev}]
Let $\mathcal{G}$ be a strongly connected oriented graph.
Then for all vertices $a, b \in V$ 
\begin{equation*}
	h_{{G}}(\mathcal{A}_{\mathcal{G}}^{\mathbb{N}},\sigma_{\mathcal{G}}) = \underset{n \to \infty}{\lim} \dfrac{1}{n} \log\left( p_{ab}^{\mathcal{G}}(n)\right) = -\log\left(R(\mathcal{G}) \right).  
\end{equation*}	
\end{proposition}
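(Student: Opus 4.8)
The plan is to establish the two equalities separately: first identify $-\log R(\mathcal{G})$ with the exponential growth rate of $p_{ab}^{\mathcal{G}}(n)$, and then squeeze the Gurevich entropy between two bounds coming from finite subgraphs. For the first equality I would apply the Cauchy--Hadamard formula to the power series $\sum_n p_{ab}^{\mathcal{G}}(n)z^n$, which gives $\limsup_n \tfrac1n\log p_{ab}^{\mathcal{G}}(n) = -\log R_{ab}(\mathcal{G})$; by the Proposition of \cite{vere} recalled above, strong connectivity forces $R_{ab}(\mathcal{G}) = R(\mathcal{G})$ for all $a,b$, so in particular this $\limsup$ does not depend on $a$ or $b$. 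To replace the $\limsup$ by a genuine limit I would use that concatenating loops at $a$ yields the superadditivity $p_{aa}^{\mathcal{G}}(n+m)\ge p_{aa}^{\mathcal{G}}(n)\,p_{aa}^{\mathcal{G}}(m)$, so Fekete's lemma gives $\lim_n \tfrac1n\log p_{aa}^{\mathcal{G}}(n) = \sup_n \tfrac1n\log p_{aa}^{\mathcal{G}}(n)$ (the limit read along the residue class of the period of $\mathcal{G}$, off which the counts vanish), and strong connectivity transfers this to $p_{ab}^{\mathcal{G}}$ by inserting fixed connecting paths $a\to b$ and $b\to a$. This identifies the path growth rate with $-\log R(\mathcal{G})$.

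For the inequality $h_G\le -\log R(\mathcal{G})$: given a finite subgraph $\mathcal{H}\subset\mathcal{G}$ with vertex set $V_{\mathcal{H}}$, the space $\mathcal{A}_{\mathcal{H}}^{\mathbb{N}}$ is a compact topological Markov shift on a finite alphabet, so its topological entropy is the usual limit $h_{top}(\mathcal{A}_{\mathcal{H}}^{\mathbb{N}},\sigma_{\mathcal{H}}) = \lim_n\tfrac1n\log W_n(\mathcal{H})$, where $W_n(\mathcal{H})$ counts admissible words of length $n$ in $\mathcal{H}$. Each such word is one of the $p_{xy}^{\mathcal{H}}(n)\le p_{xy}^{\mathcal{G}}(n)$ paths with $x,y\in V_{\mathcal{H}}$, and routing through $a$ (possible since $\mathcal{G}$ is strongly connected) gives $p_{xy}^{\mathcal{G}}(n)\le p_{aa}^{\mathcal{G}}(n+L)$ for a constant $L=L(\mathcal{H})$; hence $W_n(\mathcal{H})\le |V_{\mathcal{H}}|^2\, p_{aa}^{\mathcal{G}}(n+L)$, so $h_{top}(\mathcal{A}_{\mathcal{H}}^{\mathbb{N}},\sigma_{\mathcal{H}})\le -\log R(\mathcal{G})$ by the first step. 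Taking the supremum over all finite $\mathcal{H}$ gives $h_G\le -\log R(\mathcal{G})$.

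For the reverse inequality $h_G\ge -\log R(\mathcal{G})$ I would fix $a$ and, for each $N$, let $\mathcal{H}_N$ be the finite subgraph of $\mathcal{G}$ consisting of those edges that occur in loops at $a$ of length at most $N$; then $p_{aa}^{\mathcal{H}_N}(n) = p_{aa}^{\mathcal{G}}(n)$ for all $n\le N$. Applying Fekete's lemma to $n\mapsto \log p_{aa}^{\mathcal{H}_N}(n)$ and using the trivial bound $p_{aa}^{\mathcal{H}_N}(n)\le W_{n+1}(\mathcal{H}_N)$ one gets
\begin{equation*}
h_{top}(\mathcal{A}_{\mathcal{H}_N}^{\mathbb{N}},\sigma_{\mathcal{H}_N}) \ge \sup_n \tfrac1n\log p_{aa}^{\mathcal{H}_N}(n) \ge \tfrac1N\log p_{aa}^{\mathcal{H}_N}(N) = \tfrac1N\log p_{aa}^{\mathcal{G}}(N),
\end{equation*}
and letting $N\to\infty$ (along values with $p_{aa}^{\mathcal{G}}(N)>0$) together with the first step gives $h_G\ge -\log R(\mathcal{G})$. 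Combining the three parts yields $h_G = \lim_n\tfrac1n\log p_{ab}^{\mathcal{G}}(n) = -\log R(\mathcal{G})$.

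The main obstacle is the last inequality: one must realize an essentially sharp lower bound for the entropy inside a \emph{finite} subgraph, and the point that makes this work is that all loops at $a$ of length $\le N$ lie in the single finite subgraph $\mathcal{H}_N$ and their concatenations stay inside it, so $\mathcal{H}_N$ already exhibits the growth rate $\tfrac1N\log p_{aa}^{\mathcal{G}}(N)$. The only other point requiring care is the periodicity bookkeeping: when $\mathcal{G}$ has period $p>1$ the counts $p_{ab}^{\mathcal{G}}(n)$ vanish outside one residue class modulo $p$, so every occurrence of ``$\lim_n$'' above is to be read along that class (equivalently, as a $\limsup_n$).
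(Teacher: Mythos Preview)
The paper does not supply its own proof of this proposition: it is quoted from Gurevich \cite{gurev} and stated without argument, so there is nothing in the paper to compare your proof against. Your outline is a correct reconstruction of the standard proof. The three ingredients you identify --- Cauchy--Hadamard together with the Vere-Jones proposition to pin down $-\log R(\mathcal{G})$ as the common $\limsup$, Fekete's lemma on $\log p_{aa}^{\mathcal{G}}(n)$ to upgrade it to a limit along the period class, and the squeeze $h_G\le -\log R(\mathcal{G})\le h_G$ via finite subgraphs --- are exactly the classical ones.

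Two small points worth tightening. In your upper bound, the routing constant depends on the endpoints, so the clean statement is $p_{xy}^{\mathcal{G}}(n)\le p_{aa}^{\mathcal{G}}(n+\ell_{x,y})$ with $\ell_{x,y}\le L(\mathcal{H})$; since there are only $|V_{\mathcal{H}}|^2$ pairs, summing still gives the growth rate $-\log R(\mathcal{G})$, but as written the single inequality $p_{xy}^{\mathcal{G}}(n)\le p_{aa}^{\mathcal{G}}(n+L)$ is not literally true (the right-hand side can vanish for periodicity reasons). In the lower bound you should note that $\mathcal{H}_N$ is finite precisely because $p_{aa}^{\mathcal{G}}(k)<\infty$ for $k\le N$; this holds whenever $R(\mathcal{G})>0$, and when $R(\mathcal{G})=0$ the statement reduces to $h_G=\infty$, which your construction still yields by taking finite subgraphs containing arbitrarily many length-$N$ loops at $a$. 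With these cosmetic fixes the argument is complete.
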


More than that, we still have the following result for topological Markov shifts.

\begin{theorem}[\cite{gurevi}]
Let $\mathcal{G}$ be an oriented graph. Then 
\begin{equation*}
h_G(\mathcal{A}_{\mathcal{G}}^{\mathbb{N}},\sigma_{\mathcal{G}}  ) = \sup \left\lbrace h_{\mu}\left( \mathcal{G} \right); \mu \, \mbox{is}\; \sigma_{\mathcal{G}}-\mbox{invariant probability measure}  \right\rbrace. 
\end{equation*}
\end{theorem}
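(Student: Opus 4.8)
This is the variational principle for the Gurevich entropy (due to Gurevich), and the plan is to prove the two inequalities separately. Write $h_G=h_G(\mathcal{A}_{\mathcal{G}}^{\mathbb{N}},\sigma_{\mathcal{G}})$ and let $\mathcal{P}$ denote the set of $\sigma_{\mathcal{G}}$-invariant Borel probabilities on $\mathcal{A}_{\mathcal{G}}^{\mathbb{N}}$. For $h_G\le\sup_{\mu\in\mathcal{P}}h_\mu(\sigma_{\mathcal{G}})$ I would use that, by definition, $h_G$ is the supremum of $h_{top}(\mathcal{A}_{\mathcal{H}}^{\mathbb{N}},\sigma_{\mathcal{H}})$ over finite subgraphs $\mathcal{H}\subset\mathcal{G}$; for such $\mathcal{H}$ the space $\mathcal{A}_{\mathcal{H}}^{\mathbb{N}}$ is a compact subshift over a finite alphabet, so the classical variational principle gives $h_{top}(\mathcal{A}_{\mathcal{H}}^{\mathbb{N}},\sigma_{\mathcal{H}})=\sup_{\nu}h_\nu(\sigma_{\mathcal{H}})$ over $\sigma_{\mathcal{H}}$-invariant probabilities $\nu$. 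Since $\mathcal{A}_{\mathcal{H}}^{\mathbb{N}}$ is a closed $\sigma_{\mathcal{G}}$-invariant subset of $\mathcal{A}_{\mathcal{G}}^{\mathbb{N}}$, each such $\nu$ belongs to $\mathcal{P}$ with the same measure-theoretic entropy, and taking the supremum over $\mathcal{H}$ finishes this direction (and forces $\sup_\mu h_\mu=\infty$ when $h_G=\infty$).

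For $\sup_{\mu\in\mathcal{P}}h_\mu(\sigma_{\mathcal{G}})\le h_G$ I would first make two reductions: by the ergodic decomposition and affinity of $\mu\mapsto h_\mu(\sigma_{\mathcal{G}})$ it suffices to treat ergodic $\mu$, and I may assume $h_G<\infty$. Next, for ergodic $\mu$, Birkhoff's theorem shows that almost every orbit visits each state of $S:=\{v\in V:\mu([v])>0\}$ infinitely often; hence any two states of $S$ communicate, so $S$ lies in one strongly connected component $C$ of $\mathcal{G}$, $\mu$ is carried by $\mathcal{A}_{C}^{\mathbb{N}}$, $h_\mu(\sigma_{\mathcal{G}})=h_\mu(\sigma_{C})$, and $h_G(\mathcal{A}_{C}^{\mathbb{N}})\le h_G$. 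So I may assume $\mathcal{G}$ strongly connected, in which case $R:=R(\mathcal{G})=e^{-h_G}$ is well defined and endpoint-independent by the cited results.

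Then I would induce at a state. Fix $a\in V$ with $\mu([a])>0$, let $\varphi_a(x)=\min\{n\ge1:x_n=a\}$, $\sigma_a=\sigma_{\mathcal{G}}^{\varphi_a}$ on $[a]$, and $\mu_a=\mu([a])^{-1}\mu|_{[a]}$. Decomposing each point of $[a]$ into its successive first-return loops at $a$ identifies $(\sigma_a,\mu_a)$, up to a null set, with the \emph{full} shift over the countable alphabet $W_a$ of first-return loops at $a$; its time-zero partition $\beta=\{[w]_0:w\in W_a\}$ is a generator, and $\varphi_a\equiv\ell_w$ (the number of edges of $w$) on $[w]_0$. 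Abramov's formula gives $h_\mu(\sigma_{\mathcal{G}})=\mu([a])\,h_{\mu_a}(\sigma_a)$ and Kac's formula gives $\sum_{w}\ell_w\,\mu_a([w]_0)=\int_{[a]}\varphi_a\,d\mu_a=\mu([a])^{-1}$. Since first-return loops with $\ell_w=n$ number $f_{aa}(n)$, one has $\sum_{w}z^{\ell_w}=F_{aa}(z):=\sum_n f_{aa}(n)z^n$, and the renewal identity $P_{aa}(z)\bigl(1-F_{aa}(z)\bigr)=1$ (with $P_{aa}(z)=\sum_n p_{aa}(n)z^n$) forces $F_{aa}(z)<1$ for $0\le z<R$, hence $F_{aa}(R)\le1$ by monotone convergence. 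Writing $p_w=\mu_a([w]_0)$, $q_w=R^{\ell_w}$, so $\sum_w q_w=F_{aa}(R)\le1$, the Gibbs inequality $\sum_w p_w\log(p_w/q_w)\ge-\log\sum_w q_w\ge0$ gives
\begin{align*}
h_{\mu_a}(\sigma_a)=h_{\mu_a}(\sigma_a,\beta)\le H_{\mu_a}(\beta)&=-\sum_{w}p_w\log p_w\le-\sum_{w}p_w\log q_w\\
&=(-\log R)\sum_{w}\ell_w\,p_w=\frac{-\log R}{\mu([a])},
\end{align*}
whence $h_\mu(\sigma_{\mathcal{G}})=\mu([a])\,h_{\mu_a}(\sigma_a)\le-\log R=h_G$.

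The main obstacle is this second inequality: because $\mathcal{A}_{\mathcal{G}}^{\mathbb{N}}$ is non-compact and a general invariant measure need not be supported on any finite subsystem, the compact variational principle of the first step gives no control, and one must instead pass to the tractable full-shift model by first-return inducing, verify that this model genuinely carries a finite-entropy generating partition, and then squeeze the entropy between the combinatorial data via Abramov's formula, Kac's formula, and the renewal bound $F_{aa}(R)\le1$. Checking the underlying measure-theoretic points in this countable, possibly non-locally-compact setting---above all the $\mu$-integrability of $\varphi_a$ (so that inducing and Abramov's formula apply) and the finiteness of $H_{\mu_a}(\beta)$---is where the real work sits.
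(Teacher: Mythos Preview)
The paper does not give a proof of this theorem at all: it is stated as a quoted result of Gurevich with the citation \cite{gurevi}, and no argument is supplied. So there is nothing in the paper to compare your proposal against.

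That said, your outline is a correct and essentially the standard derivation of Gurevich's variational principle. The inequality $h_G\le\sup_\mu h_\mu$ is exactly as you say, immediate from the definition of $h_G$ as a supremum over finite subgraphs together with the classical (compact) variational principle. For the reverse inequality, the inducing-at-a-vertex argument combined with Abramov's formula, Kac's lemma, the renewal bound $F_{aa}(R)\le1$, and the Gibbs inequality is precisely the classical route. One remark: the two ``obstacles'' you flag at the end are not genuine difficulties here. Integrability of the first-return time $\varphi_a$ is automatic for any invariant \emph{probability} $\mu$ with $\mu([a])>0$, since Kac's lemma gives $\int_{[a]}\varphi_a\,d\mu_a=\mu([a])^{-1}<\infty$ with no extra hypothesis. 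And finiteness of $H_{\mu_a}(\beta)$ is delivered by the very Gibbs-inequality line you wrote: once $h_G<\infty$ one has $-\log R<\infty$, hence $H_{\mu_a}(\beta)\le(-\log R)\sum_w\ell_w p_w=(-\log R)/\mu([a])<\infty$, which in turn legitimises the Kolmogorov--Sinai identification $h_{\mu_a}(\sigma_a)=h_{\mu_a}(\sigma_a,\beta)$ and Abramov's formula in the countable setting. So your sketch is in fact complete.
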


The supremum is not necessarily reached. The next Theorem gives a necessary and sufficient condition for the existence of a measure of maximal entropy (that is, a probability measure $\mu$ such that $h_G(\mathcal{A}_{\mathcal{G}}^{\mathbb{N}})= h_{\mu}(\mathcal{A}_{\mathcal{G}}^{\mathbb{N}})$ when the graph is strongly connected.

\begin{theorem}[\cite{gurev}]
Let $\mathcal{G}$ be a strongly connected oriented graph of finite positive entropy. Then the topological Markov shift on $\mathcal{G}$ admits a measure of maximal entropy if and only if the graph $\mathcal{G}$ is positive recurrent. Moreover, such a measure is unique if it exists.
\end{theorem}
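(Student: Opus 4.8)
The plan is to cite the classical dichotomy for countable-state topological Markov shifts due to Vere-Jones and Gurevich, and deduce the uniqueness and characterization from the theory of $R$-recurrence of the transition matrix. First I would recall that, since $\mathcal{G}$ is strongly connected, the quantities $p_{ab}^{\mathcal{G}}(n)$ all share the common radius of convergence $R = R(\mathcal{G})$, and that finite positive entropy means $0 < -\log R < \infty$, i.e. $0 < R < 1$. Following Vere-Jones, one classifies the graph as $R$-transient, $R$-null recurrent, or $R$-positive recurrent according to the behaviour of the series $\sum p_{aa}^{\mathcal{G}}(n) R^n$ and the first-return series $\sum f_{aa}(n) R^n$ at their common radius of convergence; strong connectedness guarantees this classification is independent of the chosen vertex $a$.

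Next I would set up the variational side. By the Gurevich theorem quoted just above (the one attributed to \cite{gurevi}), $h_G(\mathcal{A}_{\mathcal{G}}^{\mathbb{N}},\sigma_{\mathcal{G}})$ equals the supremum of the measure-theoretic entropies $h_\mu$ over $\sigma_{\mathcal{G}}$-invariant probability measures. A measure of maximal entropy is therefore an invariant probability $\mu$ attaining this supremum. To connect attainment with positive recurrence, I would invoke the construction of the Gurevich measure: when $\mathcal{G}$ is $R$-positive recurrent, the Vere-Jones theory provides strictly positive left and right eigenvectors $\ell$ and $r$ of the transition matrix for the eigenvalue $1/R$ with $\sum_a \ell_a r_a < \infty$, and these assemble (after normalization) into a shift-invariant Markov measure whose entropy is exactly $-\log R = h_G$; this is the Parry–Gurevich measure. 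Conversely, if a measure of maximal entropy exists, one shows its ergodic components are themselves maximal and, restricting to one, the associated return-time statistics force the first-return series to diverge appropriately at $z = R$, i.e. $R$-positive recurrence; the key input is the Abramov–Rokhlin type relation expressing $h_\mu$ through the induced system on a cylinder $[a]$, together with the fact that for a Markov measure the induced entropy cannot exceed $-\log R$ unless the $R$-positive recurrent eigenmeasure is used.

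For uniqueness I would argue that any measure of maximal entropy must be the Gurevich measure. Restricting to an ergodic component and inducing on a single state $a$, the induced map is a (countable-alphabet) full shift on first-return loops, and a measure of maximal entropy for the original system induces a measure of maximal entropy for this full shift; but a Bernoulli-type full shift on a countable alphabet with the corresponding weights has a \emph{unique} equilibrium/maximal measure (the one given by the eigenvector weights $f_{aa}(n)R^n$), by a standard convexity/strict-concavity argument for entropy. Pulling this back up the tower reconstructs $\mu$ uniquely as the Parry–Gurevich measure, so two measures of maximal entropy must coincide.

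The main obstacle I anticipate is the converse implication — showing that existence of a maximal measure \emph{forces} $R$-positive recurrence — because it requires the delicate analytic input from Vere-Jones' classification (relating divergence of $\sum n f_{aa}(n) R^n$ to positivity of the eigenmeasure's mass) rather than a soft dynamical argument; handling the case where the maximal measure is a priori non-ergodic, and ruling out that entropy is attained "at infinity" in a way not seen by any finite subgraph, is the technically sensitive point. In practice, rather than reproving this, the cleanest route is to cite \cite{gurev} for the full statement, sketch the eigenvector construction for the "if" direction and the inducing/uniqueness argument for the "only if" and uniqueness directions, and refer the reader to the Vere-Jones–Gurevich literature for the recurrence-theoretic lemmas.
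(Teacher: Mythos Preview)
Your proposal sketches the classical Vere-Jones--Gurevich argument (the $R$-recurrence trichotomy, construction of the Parry--Gurevich eigenmeasure in the positive recurrent case, and an inducing argument for the converse and uniqueness), and this is indeed the standard route to the result. However, there is nothing to compare against: the paper does not prove this theorem at all. It is stated purely as a quotation from the literature, attributed to \cite{gurev}, and is used only as background in Section~\ref{two-sided topological Markov shifts}. So your sketch is not so much an alternative to the paper's proof as a substitute for a proof the paper deliberately omits; if anything, your outline is more detailed than what the paper requires, since the authors simply invoke the result as a black box.
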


Let $\pi = \{p_a , a \in \mathcal{A}_{\mathcal{G}}\}$ be a probability vector assigning probabilities to the symbols of the alphabet $V$. For each block of size $k$ we have $$\mu_{\pi} ([a_0 a_1  \cdots a_{k-1}]) = p_{a_0} p_{a_{1}} \cdots p_{a_{k-1}},$$ where a block over $\mathcal{A}_{\mathcal{G}}^{\mathbb{N}}$ is a finite sequence of elements of $\mathcal{A}_{\mathcal{G}}^{\mathbb{N}}$. In this case we can see that a centered cylinder in the block of size $k,$ $a_0a_1 \cdots a_{k-1}$ at position $j$ is the set of all bi-infinite sequences of elements that match $a_0$ at position $j$, $a_1$ at position $j + 1, \cdots , a_n$ at position $j + k-1$. Note that if we treat the symbols appearing at the consecutive coordinates as $\mathcal{A}_{\mathcal{G}}^{\mathbb{N}}$-valued random variables then the product measure $\mu_{\pi}$ corresponds to the case where the above random variables are independent and identically distributed (i.i.d.). 

In ergodic theory, the measure $\mu_{\pi}$ is called the \textbf{Bernoulli measure} (associated to $\pi$), and the system $(\mathcal{A}_{\mathcal{G}}^{\mathbb{N}}, \mathcal{B}, \mu_{\pi} , \sigma_{\mathcal{G}})$ is called a Bernoulli shift.

\begin{definition}
Given $g : M \to M$ and any measure $\mu$ in $M$, we denote by $g_{\ast}\mu$ and call  iterated (or image) of $\mu$ by $g$, the measure defined by $g_{\ast}\mu(B)= \mu(g^{-1}(B))$ for each measurable set $B \subset M$. Note that $\mu$ is invariant to $g$ if and only if $g_{\ast}\mu = \mu$.
\end{definition}

\begin{definition}
	\label{transf}
Let $\mu$ and $\nu$ be probability measures invariant under measurable  maps $g : M \to  M$ and $h : N \to N$, respectively. We say that the systems $(M,\mathfrak{M},\mu,g)$ and $(N,\mathfrak{N}, \nu,h)$ are \textbf{ergodically equivalent} if		
\begin{enumerate}
\item[$(i)$] One can find measurable sets $A \subset M$ and $B \subset N$ that are invariant by $g$ and $h$ respectively, i.e., $g(A) \subset A$ and $h(B) \subset B$, with $\mu(A) = 1$ and $\nu(B) = 1$;
\item[$(ii)$]  There exists a bijection $L : M \to N$ such that $L$ and $L^{-1}$ are measurable, in such a way that, $h_{\ast}\mu = \nu$ and $L \circ g = h \circ L$.

	\end{enumerate}
\end{definition}
We say that a mensurable map $g : M \to M$ is a \textbf{Bernoulli map}  if $(M,\mathfrak{M},\mu,g)$ it ergodically equivalent to $\left(\mathcal{A}_{\mathcal{G}}^{\mathbb{N}},\mathcal{B},\mu_{\pi} , \sigma_{\mathcal{G}} \right)$.

A continuous function $\psi : \mathcal{A}_{\mathcal{G}}^{\mathbb{N}}\to {\mathbb{R}}$ is called a potential. For every $n \geq 1$, the $n$-variation of $\psi$ is defined by $$Var_n \psi:= \sup \left\lbrace \mid \psi(x)- \psi(y)\mid\, ; x,y \in \mathcal{A}_{\mathcal{G}}^{\mathbb{N}}, \, x_i=y_i, \, \mid i \mid \leq n-1, \; n \geq 1 \right\rbrace .$$ We say that $\psi$ has \textbf{summable variations} if $$\displaystyle \sum _{n =2}^{\infty }Var_n \; \psi < \infty.$$
Finally, we say that $\psi$ satisfies \textbf{Walters' condition} $\underset{n \in \mathbb{N}^{\ast}}{\sup}\,\underset{n+k}{Var}\;\psi_n < \infty$ for each $k \in \mathbb{N}$ and $\underset{k}{\lim}\,\underset{n \in \mathbb{N}^{\ast}}{\sup}\,\underset{n+k}{Var}\; \psi_n = 0.$

In $\left(\mathcal{A}_{\mathcal{G}}^{\mathbb{N}}, d\right)$ we denote by $\mathfrak{C}\left(\mathcal{A}_{\mathcal{G }}^{\mathbb{N}}\right)$ to the normed vector space of bounded functions $g : \mathcal{A}_{\mathcal{G}}^{\mathbb{N}} \to \R$, with the norm $\parallel g \parallel_{\infty} = \sup\left\lbrace \mid g (x)\mid : x \in \mathcal{A}_{\mathcal{G}}^{\mathbb{N}} \right\rbrace.$ Define the  operator $\mathcal{L}_{\psi}g: \mathfrak{C}\left(\mathcal{A}_{\mathcal{G }}^{\mathbb{N}}\right)\to \mathfrak{C}\left(\mathcal{A}_{\mathcal{G }}^{\mathbb{N}}\right) $, called the Ruelle's operator, as follows: 
\begin{equation}
\mathcal{L}_{\psi}g = \displaystyle \sum_{\sigma_{\mathcal{G}}(y)=x} e^{\psi}(y)g(y).
\end{equation}
Some calculations show that $$\mathcal{L}_{\psi}^ng = \displaystyle \sum_{\sigma_{\mathcal{G}}^n(y)=x} e^{\psi_n}(y)g(y),$$ where $\psi_n(x) = \overset{n-1}{\underset{j = 0}{\sum}} \psi(\sigma_{\mathcal{G}}^j(x))$ is the Birkhoff (ergodic) sum of $\psi.$ 

\begin{definition}
Given a potential $\psi : \mathcal{A}_{\mathcal{G}}^{\mathbb{N}} \to \mathbb{R}$ and $a \in V$, the $n$-\textit{th partition function} is defined by 
\begin{equation*}
\mathfrak{Z}_n\left( \psi, [a]\right) = \underset{x \in \mathcal{A}_{\mathcal{G}}^\mathbb{N}: \sigma_{\mathcal{G}}^n(x) =x}{\sum} e^{\psi_n(x)}{\Large \mathds{1}}_{[a]}(x),
\end{equation*}
where 
 ${\Large \mathds{1}}_{[a]}$ is the \textit{indicator function (or characteristic function)}. The \textbf{Gurevich pressure} at the symbol a is defined as 
\begin{equation*}
	P_G\left( \psi, [a]\right) := \underset{n \to \infty}{\limsup}\frac{1}{n} \log \mathfrak{Z}_n(\psi_n,[a]) 
	\end{equation*}
\end{definition}

When $\mathcal{A}_{\mathcal{G}}^{\mathbb{N}}$ is topologically   transitive and $\psi$ satisfies the Walters' condition, then $P_G(\psi, [a])$ does not depend on $a$. Since these hypotheses are the minimal assumed over the countable Markov shift and the potential, we will write $P_G(\psi)$. A particular but important case is to consider the constant potential $\psi \equiv 0$, then the pressure coincides with the Gurevich entropy, see \cite{gure, gurevi}:
$$h_G(\mathcal{A}_{\mathcal{G}}^{\mathbb{N}}) = P_G(0).$$ Moreover, if $\mathcal{A}_{\mathcal{G}}^{\mathbb{N}}$ is topologically mixing, then the pressure is a limit.

\begin{definition}
Suppose $\mathcal{A}_{\mathcal{G}}^{\mathbb{N}}$ is a countable Markov shift. A continuous function $\psi : \mathcal{A}_{\mathcal{G}}^{\mathbb{N}} \to \mathbb{R}$ is called \textit{admissible} if $\underset{\sigma_{\mathcal{G}}^n(y) =x}{\sum} e^{\psi_n(x){\Large \mathds{1}}_{[a]}(x)} < \infty$ for all $n \in \mathbb{N}$, $a \in V$, and $x \in \mathcal{A}_{\mathcal{G}}^{\N}$. 
\end{definition}

The following definitions will be important for what we will see in Section \ref{hausdorf-dim}.
\begin{definition}
Let $A \subset \mathcal{A}_{\mathcal{G}}^{\mathbb{N}}$. Denote by $R_A(x) :={\Large \mathds{1}}_A(x) \inf\{n \geq 1 : \sigma_{\mathcal{G}}^n(x) \in A\}$ the first return time map to the set $A$. In the particular case in which the set $A$ is a cylinder $[a]$ we denote $R_{[a]}(x).$
\end{definition}
Sarig \cite{sarig} introduced the following:

$$\mathfrak{Z}_n^{\ast}\left( \psi, [a]\right) = \underset{x \in \mathcal{A}_{\mathcal{G}}^\mathbb{Z}: \sigma_{\mathcal{G}}^n(x) =x}{\sum} e^{\psi_n(x)}{\Large \mathds{1}}_{[R_{[a]} = n]}(x),$$ where $[R_{[a]}=n] :=\{x \in \mathcal{A}_{\mathcal{G}}^{\mathbb{N}} :R_{[a]}(x)=n\}$.

\begin{definition}
Let $\left(\mathcal{A}_{\mathcal{G}}^{\mathbb{N}}, \sigma_{\mathcal{G}}\right) $ be topologically transitive countable Markov shift and $a \in V$. $$D_{\infty}([a]):= \underset{n \to \infty}{\limsup}\, \dfrac{1}{n}\mathfrak{Z}_n^{\ast}\left(0, [a]\right),$$ and $$D_{\infty}:= \underset{a \in \mathbb{N}}{\inf} D_{\infty}([a]).$$
\end{definition}
\begin{definition}
[Strongly positive recurrent countable Markov shift] Let $\left(\mathcal{A}_{\mathcal{G}}^{\mathbb{N}}, \sigma_{\mathcal{G}}\right) $ be a topologically transitive countable Markov shift with finite Gurevich entropy. We say that $\left(\mathcal{A}_{\mathcal{G}}^{\mathbb{N}}, \sigma_{\mathcal{G}}\right) $ is strongly positive recurrent if $D_{\infty} < h_{G}\left( \mathcal{A}_{\mathcal{G}}^{\mathbb{N}}\right)$. 
\end{definition}

We end this section by presenting the following definition

\begin{definition}
Given a topological Markov shift with the state space $V$, a function
$\mathcal{V}: V \to [1, \infty)$ is defined to be a \textbf{Lyapunov function} if there
exists a state $a^{\ast} \in V$ and parameter $0 < \lambda < 1$ (called drift) such that
$w(a^{\ast}|a^{\ast}) > 0$ and for any state $a\neq a^{\ast}$

\begin{equation}
	\mathbb{E}[\mathcal{V} (\mathcal{X}_{m+1})|\mathcal{X}_m] \leq\mathcal{V}(x)\quad m \in\N.
\end{equation}
\end{definition}

This definition of the Lyapunov function is more restrictive compared to that of \cite{meyn}, but this way the analysis becomes significantly simpler.

The existence of a Lyapunov function guarantees the existence of
a stationary distribution (see \cite{mt}). Furthermore, certain mixing properties are also maintained.
\section{Conjugacy between time-one map of  PSVFs  and  one-sided shift map}
\label{conjugacy}

Consider the following PSVF

\begin{equation}
	\label{zi}	
{Z_{\infty}} (x, y) =  \left\{ \begin{array}{cc} \left.\begin{array}{l} 
				X^{+}_{\infty} (x,y)   \; = \; \left( 1, 2 \sin (2 \pi x ) \right)  \; \textrm{for}  \; y \; \geq \; 0  \\
				X^{-}_{\infty} (x, y) \; = \; \left( -1, 2 \sin (2 \pi x ) \right)  \; \textrm{for}  \; y \; \leq \; 0 
			\end{array} \right. .
		\end{array}
		\right. 
\end{equation}

Take $P_{\infty} (x) = 1 - \cos (2 \pi x)$. Note that $$P_{\infty}^{'} (x) = 2 \pi \sin (2 \pi x) \Rightarrow P_{\infty}^{'} (j) = 0  \quad \forall \; j \in \mathbb{Z}$$ and $$P_{\infty}^{''} (x) = 4 \pi^2 \cos (2 \pi x) \Rightarrow P_{\infty}^{''} (j) = 4 \pi^2 > 0  \quad \forall \; j \in \mathbb{Z} .$$ 

The points $(p_j , 0)$ are visible-visible two-folds of $Z_{\infty}$, $j \in \mathbb{Z}.$ Let $$ \gamma_{\infty}^{X^{+}_{\infty}} = \{ (x, P_{\infty}(x)) \; | \: x \in \mathbb{R}\}\; \mbox{and} \;  \gamma_{\infty}^{X^{-}_{\infty}} = \{ (x, - P_{\infty}(x)) \; | \: x \in \mathbb{R}\} .$$  Define $\Lambda_{\infty} = \gamma_{\infty}^{{X^{+}_{\infty}}} \cup \gamma_{\infty}^{{X^{-}_{\infty}}} .$ Again note that $\Lambda_{\infty}$ is an invariant set for $Z_{\infty}.$ For more details see \cite{andre1}. 

\begin{remark}
We will show the existence of a conjugacy between the time-one map and a two-sided shift space. 
Since a PSVF may not provide uniqueness of trajectory through a point, the time-one map of  ${Z}_{\infty}$, ${Z}_{\infty}^1: \Lambda_{\infty} \to \Lambda_{\infty}$ such that  ${Z}_{\infty}^1(x) = \phi_{{Z}_{\infty}}(1, x)$, (here  $\phi_{{Z}_{\infty}}(0,x) = x$ and $\phi_{{Z}_{\infty}}$ is the flow of ${Z}_k$), is not well-defined, because it may have more than one image (depending on the flow chosen). One way of avoiding this is to work with a subset of the space of all possible trajectories in such a way that $T_1$ will be well-defined when restricted to this subset. So, consider $$\Omega_{\infty} = \left\lbrace \gamma \; | \; \gamma \; \mbox{ positive global trajectory of } {Z}_{\infty} \mbox{ with } \gamma (0) \in \Lambda_{\infty} \right\rbrace,$$ and then we can define the time-one map in $\Omega_{\infty}$, similarly to what was done before, that is, $T_1 : \Omega_{\infty} \to \Omega_{\infty}, \; T_1(\gamma)(\cdot) = \gamma(\cdot + 1).$
	
\end{remark}
	
	\begin{proposition}
		\label{pro}
Let $\gamma \in \Omega_{\infty}$, then for all $t \in \mathbb{R}$, there exists an unique $t^{\ast} \in [t, t+1)$ such that $\gamma (t^{\ast}) \in \gamma(t^{\ast}) \in \{(p_j,0)\}_{j \in \mathbb{N}}$.	
\end{proposition}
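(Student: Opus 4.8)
The plan is to understand the structure of any positive global trajectory $\gamma \in \Omega_\infty$ and show that between consecutive integer times it must pass through exactly one of the two-fold points $(p_j,0)$. First I would analyze the flow of $X^\pm_\infty$ explicitly: since the first coordinate of $X^+_\infty$ is $\equiv 1$ and that of $X^-_\infty$ is $\equiv -1$, along any local trajectory contained in $\Sigma^+$ (resp. $\Sigma^-$) the $x$-coordinate moves with unit speed to the right (resp. left). Moreover, the curves $\gamma_\infty^{X^+_\infty} = \{(x,P_\infty(x))\}$ and $\gamma_\infty^{X^-_\infty} = \{(x,-P_\infty(x))\}$ are exactly the orbits through the points $(p_j,0) = (j,0)$, because one checks $P_\infty'(x) = 2\pi\sin(2\pi x)$ gives $\frac{d}{dx}P_\infty(x) = 2\sin(2\pi x)$ matching the second component of $X^+_\infty$ (and similarly for $X^-_\infty$ with a sign). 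So a trajectory starting at $(p_j,0)$ and moving forward along $X^+_\infty$ rides the arc $y = P_\infty(x) \ge 0$ until $x$ increases by $1$, at which point it hits $(p_{j+1},0)$; by the visible–visible two-fold structure at $(p_{j+1},0)$ the trajectory then either continues into $\Sigma^+$ along the next arc of $\gamma_\infty^{X^+_\infty}$, or switches to $X^-_\infty$ and rides $y = -P_\infty(x) \le 0$ with $x$ now decreasing by $1$, returning to $(p_j,0)$. In every case, the time elapsed between two successive visits to $\{(p_j,0)\}_{j}$ is exactly $1$, since $|x|$ changes by exactly $1$ at unit speed and $P_\infty(j+1)=P_\infty(j)=0$.

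Next I would set $\gamma(0) \in \Lambda_\infty$ and use the previous paragraph to parametrize $\gamma$: there is a bi-infinite (or, since $\gamma$ is a positive global trajectory, forward-infinite) increasing sequence of times $\cdots < s_{-1} < s_0 \le 0 < s_1 < s_2 < \cdots$ with $s_{k+1} - s_k = 1$ for all $k$, such that $\gamma(s_k) \in \{(p_j,0)\}_j$ and $\gamma$ restricted to $[s_k,s_{k+1}]$ is a single arc of $\Lambda_\infty$ meeting $\Sigma$ only at its endpoints. (The case $\gamma(0)=(p_j,0)$ itself just means $s_0 = 0$.) Because consecutive $s_k$ differ by exactly $1$, the set $\{s_k\}$ meets every half-open interval $[t,t+1)$ in exactly one point: given $t$, let $t^\ast$ be the unique $s_k$ lying in $[t,t+1)$. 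Uniqueness follows since two distinct such times would differ by a positive integer multiple of $1$, hence could not both lie in an interval of length $1$; existence follows since the $s_k$ form an arithmetic progression of step $1$ unbounded above and, for a positive global trajectory, we only need this for $t \ge 0$, which is covered by $s_1 < s_2 < \cdots$ together with $s_0$ if $s_0 = 0$, but in fact the argument gives it for all $t \in \R$ once the progression is bi-infinite.

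The one subtlety — and the main thing to pin down carefully — is that a positive global trajectory $\gamma \in \Omega_\infty$ genuinely never gets "stuck" and never spends a non-unit amount of time off $\Sigma$: this is where Definition \ref{def2}$(iii)$ and the fact that all the tangencies $(p_j,0)$ are \emph{visible–visible} (hence regular, not singular) is essential. At a visible–visible two-fold the trajectory must leave $\Sigma$ immediately on both sides, so it cannot remain at $(p_j,0)$ for positive time (that would require a singular tangency, excluded here), and it cannot slide along $\Sigma$ (sliding is excluded by the standing Remark). One should also note $\Lambda_\infty \cap \Sigma = \{(p_j,0)\}_j$ exactly, since $P_\infty(x) = 0 \iff \cos(2\pi x) = 1 \iff x \in \Z$; this identifies the return set in the statement. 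I expect the extremal bookkeeping of the endpoint cases — $t$ exactly equal to some $s_k$, or $\gamma(0)$ itself a two-fold — to be the only place demanding care; the dynamical content is entirely the unit-speed-in-$x$ observation combined with the period-$1$ spacing of the integers.
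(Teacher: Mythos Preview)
Your argument is sound and considerably more self-contained than the paper's, which simply says the result ``follows immediately from the expression of $Z_\infty$ and Lemmas 1 and 2 of the reference \cite{andre1}.'' The essential dynamical observation you isolate --- that the first component of $X^\pm_\infty$ is identically $\pm 1$, so the $x$-coordinate moves at unit speed and any trajectory on $\Lambda_\infty$ returns to $\Sigma=\{y=0\}$ exactly when $x$ reaches the next integer, hence after time exactly $1$ --- is precisely the content those cited lemmas package. Your direct route makes the proposition independent of the external reference; the paper's citation buys brevity.

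One small slip worth fixing: you write that $P_\infty'(x)=2\pi\sin(2\pi x)$ ``gives $\frac{d}{dx}P_\infty(x)=2\sin(2\pi x)$,'' but these are the same derivative and the factor of $\pi$ does not vanish. In fact the curve $\{(x,P_\infty(x))\}$ is not literally an orbit of $X^+_\infty$ as the field is written in \eqref{zi}; solving $\dot x=1$, $\dot y=2\sin(2\pi x)$ from $(j,0)$ gives $y(t)=P_\infty(t)/\pi$, so the paper itself carries a harmless $\pi$-scaling inconsistency in the definition of $\Lambda_\infty$. This does not affect your proof at all, since the unit-time return relies only on $\dot x=\pm 1$ and the $1$-periodicity in $x$ of the second component, not on the exact profile of the arcs. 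Just correct the sentence so as not to propagate the error.
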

\begin{proof}
Follows immediately from the expression of $Z_{\infty}$ and Lemmas $1$ and $2$ of the reference \cite{andre1}.
\end{proof}
The region $\Lambda_{\infty}$ can be partitioned into arcs that goes from $p_j$ to the adjacent ones ($p_{j+1}$ and $p_{j-1}$).  So,
we will separate  $\Lambda_{\infty}$ into arcs, as follows:
	case: let $I_{2j} = \{(x, P_{\infty}(x)) |\;   j < x < j + 1\}$ and $I_{2j+1} = \{(x, -P_{\infty}(x)) |\; j < x < j +1\}$.

Before we continue, consider the following notation:
put $\mathcal{G}^{Z_{\infty}}$ as the graph whose vertices are the arcs of trajectories of ${\Omega}_{\infty}$ and the edges are the transitions from one arc to the other with a certain probability. Consider also the topological Markov shift $\mathcal{A}_{\mathcal{G}^{Z_{\infty}}}^{\mathbb{N}}$ associated with the graph $\mathcal{G}^{Z_{ \infty}}$ and $\Theta_{\infty} = \left\lbrace (x_j )_{j\in\N} ; x_j \in \mathcal{A}_{\mathcal{G}^{Z_{\infty}}}^{\N}\; \mbox{and}\: \mid x_{j+1} - x_j\mid \leq 2, \;\; \forall j \in \N\right\rbrace .$
	
\begin{definition} 
	\label{def7}
	Let $s : \Omega_{\infty} \rightarrow \Theta_{\infty}$ be given by $s(\gamma) = (s_j(\gamma))_{j \in \mathbb{N} }$, where : 
		
$$ s_j(\gamma) =  \left\{ \begin{array}{cc} \left.\begin{array}{l} 
	n \; \textrm{if}  \; \gamma(j) \; \in \; I_n  \\
	m \; \textrm{if}  \; \gamma(j) \; \in \; \lbrace (p_l, 0)\rbrace_{l \in \mathbb{N}} \; \textrm{and} \; \gamma \left( j + \frac{1}{2}\right) \in  I_m
				
			\end{array} \right.
		\end{array}.
		\right.   $$ 
		
	The sequence $s(\gamma)$ is called the itinerary of $\gamma$.
	\end{definition}

Now we must analyze whether, $s(\gamma) \in \Theta_{\infty}$. From Definitions \ref{def2} and \ref{def1}, of local and global trajectories, we see that if $\gamma$ goes through some compartment $I_{2l}$, then the next one must be the one below it $(I_{2l+1})$ or the one to its right $(I_{2l+2})$ and if it goes through $I_{2l+1}$ then the following one must be above it $(I_{2l})$ or the one to its left $(I_{2l-1})$. In any case, $ 0 < \mid s_j (\gamma) - s_{j+1}(\gamma)\mid \leq 2$, so $s(\gamma) \in \Theta_{\infty}$. From Proposition \ref{pro}, $(s_j (\gamma))_{j \in \mathbb{N}}$ encodes every compartment $I_j$ that the trajectory $\gamma$ visits in positive and negative time.	
	
According to Definition \ref{def7}, given $\gamma \in \Omega_{\infty}$, there exist infinitely many distinct trajectories with the same itinerary of $\gamma$, simply because the initial conditions belong to the same arc $I_n$. In order to avoid this problem we will consider the following definition:
	
	\begin{definition}
		\label{def8}
		Let $\gamma_1 , \gamma_2 \in \Omega_{\infty} .$ We say that $\gamma_1 \sim \gamma_2$ if and only if $s(\gamma_1) = s(\gamma_2) .$ Denote $\overline{\Omega}_{\infty} = \Omega_{\infty} / \sim .$
	\end{definition}  
	
The relation in Definition \ref{def8} is an equivalence relation. In fact, for  $\gamma_1 \in \Omega_k$, we have $\gamma_1 \sim \gamma_1$ because $s(\gamma_1) = s(\gamma_1)$. Also for all $\gamma_1, \gamma_2 \in \Omega_k$, if $\gamma_1 \sim \gamma_2$, we have $s(\gamma_1) = s(\gamma_2)$, and in this way, $\gamma_2 \sim \gamma_1$. And finally for all  $\gamma_1, \gamma_2, \gamma_3 \in \Omega_k$, if $\gamma_1 \sim \gamma_2$, we have $s(\gamma_1) = s(\gamma_2)$ and if $\gamma_2 \sim \gamma_3$, we have $s(\gamma_2) = s(\gamma_3)$, and in this way, $ s(\gamma_1) = s(\gamma_2) = s(\gamma_3)$, and therefore $\gamma_1 \sim \gamma_3 .$
	
\begin{proposition}\label{obs representante}
Note that, given $\overline{\gamma} \in \overline{\Omega}_{\infty}$, there exists a representative $\gamma^{\ast}$ such that  $\gamma^{\ast} (0) \in  \{(p_j,0)\}_{j \in \mathbb{N}}$.
\end{proposition}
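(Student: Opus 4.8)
The plan is to translate an arbitrary representative of $\overline{\gamma}$ in time, via the map $T$ of \eqref{acao-grupo}, so that its base point sits on a two-fold, and then to check that the translate has the same itinerary as the original, hence still represents $\overline{\gamma}$. Fix $\gamma\in\Omega_{\infty}$ with $[\gamma]=\overline{\gamma}$. If $\gamma(0)\in\{(p_j,0)\}_{j}$ we are done, so assume not. By Proposition \ref{pro} with $t=-1$ there is a unique $t^{\ast}\in[-1,0)$ with $\gamma(t^{\ast})\in\{(p_j,0)\}_{j}$, and since $\gamma(0)$ is not a two-fold we in fact have $t^{\ast}\in(-1,0)$. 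Set $\gamma^{\ast}:=T(t^{\ast},\gamma)$, so $\gamma^{\ast}(\cdot)=\gamma(\cdot+t^{\ast})$. Then $\gamma^{\ast}$ is again a positive global trajectory of $Z_{\infty}$ (a time-translate of one), its base point $\gamma^{\ast}(0)=\gamma(t^{\ast})$ is a two-fold and therefore lies in $\Lambda_{\infty}$ (alternatively, $\Lambda_{\infty}$ is invariant and $\gamma(0)\in\Lambda_{\infty}$), hence $\gamma^{\ast}\in\Omega_{\infty}$ with $\gamma^{\ast}(0)\in\{(p_j,0)\}_{j}$. Everything now reduces to proving $\gamma^{\ast}\sim\gamma$, i.e. $s(\gamma^{\ast})=s(\gamma)$.

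To that end I would first extract from Proposition \ref{pro} the structure of the two-fold visiting times: for any $\delta\in\Omega_{\infty}$ the set $\{t\in\mathbb{R}:\delta(t)\in\{(p_j,0)\}_{j}\}$ is of the form $\{\theta(\delta)+k:k\in\mathbb{Z}\}$ for some $\theta(\delta)\in[0,1)$. Indeed, two consecutive such times at distance $<1$ would both lie in a single window $[t,t+1)$, contradicting the uniqueness in Proposition \ref{pro}, while distance $>1$ would leave some window $[t,t+1)$ with no such time, contradicting existence; hence consecutive visiting times differ by exactly $1$. Thus $\delta$ runs through a single arc $I_{n_k(\delta)}$ on each open interval $\bigl(\theta(\delta)+k,\theta(\delta)+k+1\bigr)$, and Definition \ref{def7} then gives, for $j\in\mathbb{N}$: if $\theta(\delta)=0$ then $j$ is a visiting time, $\delta(j+\tfrac12)$ lies in the interior of $I_{n_j(\delta)}$, so $s_j(\delta)=n_j(\delta)$; if $\theta(\delta)\in(0,1)$ then no integer is a visiting time, $j\in\bigl(\theta(\delta)+j-1,\theta(\delta)+j\bigr)$, so $\delta(j)\in I_{n_{j-1}(\delta)}$ and $s_j(\delta)=n_{j-1}(\delta)$.

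Applying this to $\gamma$, for which $\theta(\gamma)\in(0,1)$, gives $s_j(\gamma)=n_{j-1}(\gamma)$. For $\gamma^{\ast}$ the visiting times are exactly $\mathbb{Z}$ — because $t^{\ast}=\theta(\gamma)-1$ is one of them — so $\theta(\gamma^{\ast})=0$, and the arc that $\gamma^{\ast}$ runs through on $(\ell,\ell+1)$ is the one $\gamma$ runs through on $\bigl(\theta(\gamma)+\ell-1,\theta(\gamma)+\ell\bigr)$, i.e. $n_\ell(\gamma^{\ast})=n_{\ell-1}(\gamma)$; hence $s_j(\gamma^{\ast})=n_j(\gamma^{\ast})=n_{j-1}(\gamma)=s_j(\gamma)$ for every $j\in\mathbb{N}$. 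Therefore $s(\gamma^{\ast})=s(\gamma)$, so $\gamma^{\ast}\sim\gamma$ and $\gamma^{\ast}$ is the desired representative. The one point requiring care — and the reason for translating to the \emph{previous} two-fold visiting time (the one in $[-1,0)$) rather than the next one (in $[0,1)$) — is precisely this itinerary alignment: translating backwards converts the ``$\theta\in(0,1)$'' pattern into the ``$\theta=0$'' pattern while preserving the bi-infinite sequence of arcs, whereas translating forwards would additionally shift that sequence by one index and break the equivalence. Beyond this observation the argument is routine bookkeeping, once the evenly-spaced structure of the two-fold visiting times has been read off from Proposition \ref{pro}.
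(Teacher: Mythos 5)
Your argument is correct and takes essentially the same route as the paper's own proof: shift time to the two-fold visiting time immediately preceding $0$ and verify, via Definition \ref{def7}, that the shifted trajectory has the same itinerary and hence represents the same class. You merely make explicit what the paper leaves implicit, namely the unit spacing of the visiting times extracted from Proposition \ref{pro} and the fact that the backward (rather than forward) translation is the one that preserves the itinerary.
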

\begin{proof}
Observe that, given $\gamma \in \Omega_{\infty}$, there exists a representative $\gamma^{\ast} \in \overline{\gamma}$ such that $\gamma^{\ast}(0) \in  \{(p_j,0)\}_{j \in \mathbb{N}}$, because if $\gamma(0) \in  \{(p_j,0)\}_{j \in \mathbb{N}} $, simply take $\gamma^{\ast}= \gamma$, otherwise, by Proposition \ref{pro}  there is an unique $t^{\ast}$  such that $\gamma(t^{\ast}) \in  \{(p_j,0)\}_{j \in \mathbb{N}}$. Moreover, if ${s}(\gamma) = ({s}_j)_{j \in \mathbb{N} }$, then $\gamma((t^{\ast} + j, t^{\ast} +j + 1)) = I_{{s}_j}$. Which implies that $\gamma((j, j + 1)) = I_{{s}_j}$ and, consequently, $\gamma^{\ast}(j+\frac{ 1}{2}) \in I_{{s}_j}$. So, ${s}(\gamma^{\ast}) = {s}(\gamma)$.
\end{proof}

	\begin{definition}
		\label{def10}
Define $\rho_{\infty} : \overline{\Omega}_{\infty} \times \overline{\Omega}_{\infty} \rightarrow \mathbb{R}$, by $$\rho_{\infty} (\overline{\gamma_1}, \overline{\gamma_2}) =  \underset {i \in \mathbb{N}}{\sum} \frac{d_i(\overline{\gamma_1}, \overline{\gamma_2})}{2^{ i }},$$where $d_i(\overline{\gamma_1}, \overline{\gamma_2}) = d_H \left( \gamma_1^{\ast}([i, i+1]),  \gamma_2^{\ast}([i, i+1])\right), $ $d_H$ is the Hausdorff distance and $\gamma_1^{\ast} , \gamma_2^{\ast}$ are those representatives given in Remark \ref{obs representante}.
	\end{definition}

For simplicity of notation, again we will refer only to $\gamma \in \overline{\Omega}_{\infty}$, meaning the equivalence class $\overline{\gamma}$ with the representative $\gamma^{\ast}$ given in Remark \ref{obs representante}. 

By Proposition $7$ \cite{andre1} , $(\overline{\Omega}_{\infty}, \rho_{\infty}) $ is a metric space.
Let $\overline{T_1} : \overline{\Omega}_{\infty} \rightarrow \overline{\Omega}_{\infty}$ be the function induced by $T_1$, that is, $\overline{T_1}(\overline{\gamma}) = \overline{T_1(\gamma)} .$ Note that the induced function does not depends on the representative. In fact if $s(\gamma_1) = s(\gamma_2) = (s_j)_{j \in \mathbb{N}}$, then, for all $j \in \mathbb{N}$ it happens
\[\gamma_1 (j), \gamma_2(j) \in I_{s_j} \Rightarrow \gamma_1 (j+1), \gamma_2 (j+1) \in I_{s_{j+1}} \Rightarrow\]\[  T_1 (\gamma_1)(j), T_2 (\gamma_2)(j) \in I_{s_{j+1}} \Rightarrow s(T_1(\gamma_1)) = s(T_1(\gamma_2)).\] The map $\overline{T}_1$
above is continuous (see Proposition $8$ \cite{andre1}).
	
Now, let $\overline{s} : \overline{\Omega}_{\infty} \rightarrow \Theta_{\infty}$ be the function induced by $s$, that is, $\overline{s}(\overline{\gamma}) =\overline{ s (\gamma)}$. Note that the induced function does not depend on the representative, because of the equivalence relation and because it is one-to-one. By Proposition $9$ \cite{andre1}, $\overline{s}$ in a homeomorphism over its image. 
										
	\section{Main results}
	\label{secao resultados principais}		
\begin{definition}
We define the Gurevich entropy $h_G$ of $Z=(X^{+},X^{-})$ in $M$, as the Gurevich entropy of $\overline{T}_1$ in $\overline{\Omega}$ such that $$h_G (Z):=h_G(\overline{\Omega}, \overline{T}_1).$$ Similarly, the Gurevich Pressure of $Z=(X^{+},X^{-})$ is defined by $$P_G(Z):= P_G ( \psi, \overline{T}_1, [\gamma_j]), $$ where $\gamma_j$ is an arc of trajectory of $Z=(X^{+},X^{-}).$
\end{definition}						
\begin{theorem}
\label{teo1}
Consider PSVF \eqref{zi}. Let $\psi: \widetilde{\Omega} \subseteq \overline{\Omega}_{\infty} \to \mathbb{R}$ be admissible and have summable variations. Consider $\mathcal{H}^{Z_{\infty}} := \mathcal{G}^{Z_{\infty}}|_{\widetilde{\Omega}}$ the subgraph of $\mathcal{G}^{Z_{\infty}} $, whose trajectory arcs belong to $\widetilde{\Omega}$ and suppose that the coutable Markov shift associated with $\mathcal{H}^{Z_{\infty}}$ is topologically mixing. Then, $$P_G\left(Z_{\infty}|_{\widetilde{\Omega}}\right)<\infty.$$
\end{theorem}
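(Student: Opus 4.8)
The plan is to transport the problem from the flow side to the symbolic side via the conjugacy already established, and then invoke the standard theory of Gurevich pressure for countable Markov shifts. First I would use the homeomorphism $\overline{s}$ of Proposition~9 of \cite{andre1} together with the fact that $\overline{s} \circ \overline{T}_1 = \sigma_{\mathcal{G}^{Z_\infty}} \circ \overline{s}$ to identify $\left(\widetilde{\Omega}, \overline{T}_1\right)$ with the countable Markov shift $\left(\mathcal{A}_{\mathcal{H}^{Z_\infty}}^{\mathbb{N}}, \sigma_{\mathcal{H}^{Z_\infty}}\right)$. Under this identification the potential $\psi$ on $\widetilde{\Omega}$ becomes a potential $\widetilde{\psi} = \psi \circ \overline{s}^{-1}$ on $\mathcal{A}_{\mathcal{H}^{Z_\infty}}^{\mathbb{N}}$, and by definition $P_G\!\left(Z_\infty|_{\widetilde{\Omega}}\right) = P_G\!\left(\widetilde{\psi}, [\gamma_j]\right)$; since $\mathcal{A}_{\mathcal{H}^{Z_\infty}}^{\mathbb{N}}$ is assumed topologically mixing (hence topologically transitive) and $\widetilde{\psi}$ has summable variations (hence satisfies Walters' condition), the pressure is independent of the chosen cylinder $[\gamma_j]$ and is in fact a genuine limit, so it suffices to bound $P_G(\widetilde\psi,[a])$ for a single symbol $a$.

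Next I would estimate the partition functions $\mathfrak{Z}_n(\widetilde{\psi}, [a])$. The key structural input is that the graph $\mathcal{G}^{Z_\infty}$ has bounded out-degree: from each arc $I_{2l}$ the trajectory can only pass to $I_{2l+1}$ or $I_{2l+2}$, and from $I_{2l+1}$ only to $I_{2l}$ or $I_{2l-1}$ (this is exactly the constraint $0 < |s_j - s_{j+1}| \le 2$ used to show $s(\gamma)\in\Theta_\infty$), so every vertex has at most two outgoing edges and hence the number of admissible loops of length $n$ through a fixed vertex $a$ is at most $2^n$. Therefore
\begin{equation*}
\mathfrak{Z}_n(\widetilde{\psi},[a]) = \sum_{\sigma_{\mathcal{H}^{Z_\infty}}^n(x)=x} e^{\widetilde{\psi}_n(x)}\mathds{1}_{[a]}(x) \le \#\{\text{loops of length }n\text{ at }a\}\cdot \sup_{x\in[a]} e^{\widetilde{\psi}_n(x)} \le 2^n \cdot e^{\,n\,\sup\{\widetilde\psi_n/n\}}.
\end{equation*}
To make the right-hand side have finite exponential growth rate I would control $\sup_{x} \widetilde{\psi}_n(x)/n$: writing $\widetilde\psi = \widetilde\psi - \sup\widetilde\psi + \sup\widetilde\psi$ is not available since $\widetilde\psi$ need not be bounded above a priori, so instead I would use admissibility. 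Admissibility gives $\sum_{\sigma^n(y)=x} e^{\widetilde\psi_n(y)}\mathds{1}_{[a]}(y) < \infty$ for every $x$; evaluating at a point $x\in[a]$ whose preimages include the periodic points counted in $\mathfrak{Z}_n$ (or bounding $\mathfrak{Z}_n$ by $\mathcal{L}_{\widetilde\psi}^n \mathds{1}_{[a]}$ evaluated at a suitable point using the bounded-distortion from summable variations) shows $\mathfrak{Z}_n(\widetilde\psi,[a])$ is finite for each $n$; then summable variations gives a constant $C$ with $\widetilde\psi_n(x) \le \widetilde\psi_n(y) + C$ whenever $x,y$ agree on the first $n$ coordinates, so $\mathfrak{Z}_n(\widetilde\psi,[a]) \le e^{C}\, \mathcal{L}_{\widetilde\psi}^n\mathds{1}_{[a]}(z)$ for any fixed $z\in[a]$, and iterating the admissibility bound finitely shows this grows at most exponentially. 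Taking $\tfrac1n\log$ and $\limsup$ then yields $P_G(\widetilde\psi,[a]) \le \log 2 + \limsup_n \tfrac1n\log\big(\mathcal{L}_{\widetilde\psi}^n\mathds{1}_{[a]}(z)\big) < \infty$.

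The main obstacle I anticipate is precisely the step of showing $\limsup_n \tfrac1n\log \mathcal{L}_{\widetilde\psi}^n\mathds{1}_{[a]}(z) < \infty$ — i.e., that admissibility plus summable variations genuinely forces \emph{uniform} (in $n$) exponential control rather than merely finiteness for each individual $n$. The clean way around this, which I would adopt, is to invoke the sub-multiplicativity of $Z_n(\widetilde\psi,[a])$: under topological mixing and Walters' condition one has the well-known inequality $\mathfrak{Z}_{n+m}(\widetilde\psi,[a]) \le \mathfrak{Z}_n(\widetilde\psi,[a])\,\mathfrak{Z}_m(\widetilde\psi,[a])$ (up to a bounded-distortion constant from summable variations), so by Fekete's lemma $P_G(\widetilde\psi,[a]) = \lim_n \tfrac1n\log\mathfrak{Z}_n(\widetilde\psi,[a]) = \inf_n \tfrac1n\log\mathfrak{Z}_n(\widetilde\psi,[a])$, and the latter is manifestly $\le \log\mathfrak{Z}_1(\widetilde\psi,[a]) < \infty$ by admissibility (with $n=1$). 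This reduces the whole theorem to: (i) transport via $\overline{s}$; (ii) $\mathfrak Z_1(\widetilde\psi,[a])<\infty$, which is the $n=1$ case of admissibility; (iii) sub-multiplicativity, which is a consequence of Walters' condition (implied by summable variations) exactly as in \cite{sarig}. I would present it in that order, flagging (iii) as the one place where the hypotheses on $\psi$ are really used and citing the corresponding statement from the references rather than reproving the distortion estimate.
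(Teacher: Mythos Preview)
Your reduction via the conjugacy $\overline{s}$ is fine and matches the paper's implicit set-up, and the observation that every vertex of $\mathcal{G}^{Z_\infty}$ has in- and out-degree at most $2$ is a genuine structural feature of this system. The problem is step~(iii): the inequality you invoke goes the wrong way. For a topologically mixing countable Markov shift with summable variations one has \emph{super}-multiplicativity
\[
\mathfrak{Z}_{n+m}(\widetilde\psi,[a])\ \ge\ C^{-1}\,\mathfrak{Z}_{n}(\widetilde\psi,[a])\,\mathfrak{Z}_{m}(\widetilde\psi,[a]),
\]
obtained by concatenating an $n$-loop at $a$ with an $m$-loop at $a$ (the admissibility of the concatenation is automatic because both words begin and end at $a$). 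There is no general converse: given a single $(n+m)$-loop at $a$, the symbol in position $n$ need not be $a$, so you cannot split it into an $n$-loop and an $m$-loop. Hence Fekete yields $P_G(\widetilde\psi,[a])=\sup_n\frac{1}{n}\log\bigl(C\,\mathfrak{Z}_n(\widetilde\psi,[a])\bigr)$, not an infimum, and no upper bound follows from the finiteness of $\mathfrak{Z}_1$. (For a sanity check: on the full $k$-shift with $\psi\equiv 0$ one has $\mathfrak{Z}_n=k^{n-1}$, so $\mathfrak{Z}_{n+m}=k\,\mathfrak{Z}_n\mathfrak{Z}_m>\mathfrak{Z}_n\mathfrak{Z}_m$.)

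The paper avoids this by never estimating $\mathfrak{Z}_n$ directly. It cites Sarig's criterion (Theorem~4.3 of \cite{sarig}): for a topologically mixing shift and a potential with summable variations, $P_G(\psi)<\infty$ as soon as $\|\mathcal{L}_\psi\mathds{1}\|_\infty<\infty$. The proof then reads admissibility as exactly the statement that $\mathcal{L}_\psi\mathds{1}$ is finite, and asserts the existence of a uniform bound $K$, whence Sarig's theorem finishes the job. If you want to repair your argument along self-contained lines, the place where sub-multiplicativity \emph{does} hold is for the operator: $\|\mathcal{L}_\psi^{n+m}\mathds{1}\|_\infty\le\|\mathcal{L}_\psi^{n}\|\,\|\mathcal{L}_\psi^{m}\mathds{1}\|_\infty$, and $P_G(\psi)=\lim_n\frac{1}{n}\log\|\mathcal{L}_\psi^{n}\mathds{1}_{[a]}\|_\infty$; but making that finite for $n=1$ is precisely the condition $\|\mathcal{L}_\psi\mathds{1}\|_\infty<\infty$, which brings you back to the paper's route. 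Your bounded-degree remark would close the loop immediately if you also knew $\sup\psi<\infty$ (then $\mathcal{L}_\psi\mathds{1}\le 2e^{\sup\psi}$), but admissibility alone gives only pointwise, not uniform, finiteness of $\mathcal{L}_\psi\mathds{1}$.
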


\begin{corollary}
\label{cor1}
With the hypotheses of the previous theorem, we obtain that there exists $\widetilde{\Omega} \subseteq \overline{\Omega}_{\infty}$, such that $$h_G\left(Z_{\infty}|_{\widetilde {\Omega}}\right)<\infty.$$
\end{corollary}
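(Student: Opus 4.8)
The plan is to deduce Corollary \ref{cor1} directly from Theorem \ref{teo1} by taking the potential to be identically zero, and then to use the fact --- recorded right after the definition of the Gurevich pressure, following \cite{gure,gurevi} --- that on a topologically transitive countable Markov shift the Gurevich pressure of the zero potential equals the Gurevich entropy, i.e. $h_G(\mathcal{A}_{\mathcal{G}}^{\mathbb{N}})=P_G(0)$. So the whole content of the Corollary reduces to checking that $\psi\equiv 0$ is an admissible potential with summable variations on the set $\widetilde{\Omega}$ supplied by the hypotheses of Theorem \ref{teo1}, and then transporting the conclusion through the conjugacy of Section \ref{conjugacy}.

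First I would fix $\widetilde{\Omega}\subseteq\overline{\Omega}_{\infty}$ as in the hypotheses of Theorem \ref{teo1}, so that the countable Markov shift on $\mathcal{H}^{Z_{\infty}}=\mathcal{G}^{Z_{\infty}}|_{\widetilde{\Omega}}$ is topologically mixing, hence topologically transitive, and I would take $\psi\equiv 0$ on $\widetilde{\Omega}$. Then $Var_n\,\psi=0$ for every $n\geq 1$, whence $\sum_{n\geq 2}Var_n\,\psi=0<\infty$: $\psi$ has summable variations (and trivially satisfies Walters' condition). For admissibility I would invoke the band structure of the itinerary graph established just after Definition \ref{def7}: every transition satisfies $0<|s_j-s_{j+1}|\leq 2$, so each vertex of $\mathcal{G}^{Z_{\infty}}$, a fortiori of $\mathcal{H}^{Z_{\infty}}$, has at most two outgoing and at most two incoming edges. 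Consequently $\#\{y:\sigma_{\mathcal{H}^{Z_{\infty}}}^{n}(y)=x\}\leq 2^{n}<\infty$ for every $n\in\N$ and every $x$, so $\sum_{\sigma^{n}(y)=x}e^{\psi_n(x)\mathds{1}_{[a]}(x)}$ is a finite sum for all $n$, $a$ and $x$; that is, $\psi\equiv 0$ is admissible.

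With the hypotheses of Theorem \ref{teo1} now verified for $\psi\equiv 0$, that theorem yields $P_G(Z_{\infty}|_{\widetilde{\Omega}})<\infty$. By the definition of $P_G(Z)$ this is $P_G(\psi,\overline{T}_1,[\gamma_j])$ with $\psi\equiv 0$; pushing it through the topological conjugacy $\overline{s}$ of Section \ref{conjugacy}, which identifies $(\widetilde{\Omega},\overline{T}_1)$ with the countable Markov shift $(\mathcal{A}_{\mathcal{H}^{Z_{\infty}}}^{\mathbb{N}},\sigma_{\mathcal{H}^{Z_{\infty}}})$, and using $P_G(0)=h_G$ for that topologically transitive shift together with the definition $h_G(Z)=h_G(\overline{\Omega},\overline{T}_1)$, one gets $P_G(Z_{\infty}|_{\widetilde{\Omega}})=h_G(\mathcal{A}_{\mathcal{H}^{Z_{\infty}}}^{\mathbb{N}})=h_G(Z_{\infty}|_{\widetilde{\Omega}})$. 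Hence $h_G(Z_{\infty}|_{\widetilde{\Omega}})<\infty$, which is the assertion of the Corollary.

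I do not expect any real obstacle in the Corollary beyond Theorem \ref{teo1} itself; the single point that deserves care is the admissibility of the zero potential, and that is precisely where the finite in/out-degree (band) structure of $\mathcal{G}^{Z_{\infty}}$ is used --- for a general countable Markov shift the zero potential need not be admissible. The substantive difficulty lies in Theorem \ref{teo1}, where, after carrying a general $\psi$ through $\overline{s}$, one must establish finiteness of the Gurevich pressure of a topologically mixing countable Markov shift for an admissible potential of summable variations; that is the step that appeals to the Sarig-type thermodynamic formalism (the Ruelle operator and the approximation of $P_G$ by the pressures of its finite subsystems).
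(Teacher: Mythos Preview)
Your proposal is correct and follows essentially the same route as the paper: apply Theorem \ref{teo1} with $\psi\equiv 0$ and invoke $h_G=P_G(0)$. Your explicit verification that the zero potential is admissible and has summable variations (via the bounded in/out-degree of $\mathcal{G}^{Z_{\infty}}$) fills in details that the paper's one-line proof leaves implicit, but the underlying argument is the same.
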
	

\begin{theorem}
	\label{teo2}
Suppose that $\mathcal{A}_{\mathcal{G}^{Z_{\infty}}}^{\mathbb{N}}$ is topologically mixing, $\mathcal{G}^{Z_{ \infty} }$ is positively recurrent and $\mu_{\pi}$ is ergodic. Then the system $ ( \overline{\Omega}_\infty,\mathcal{D}, med, \overline{T}_1)$ is ergodically equivalent to the system $(\Theta_{\infty}, \mathcal{B}, \mu_{ \pi},\sigma_{\mathcal{G}^{Z_{\infty}}})$, that is, $\overline{T}_1: \overline{\Omega}_{\infty} \to \overline{\Omega}_{\infty } $ is a Bernoulli map. Furthermore, if the Markov shift $\mathcal{A}_{\mathcal{G}^{Z_{\infty}}}^{\mathbb{N}}$ is aperiodic and irreducible, $\overline{T} _1$ is strongly mixing and, therefore, weakly mixing and ergodic (see Definition \ref{weakly-strong-mix}).
\end{theorem}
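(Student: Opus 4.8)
The plan is to transport the whole statement through the conjugacy $\overline{s}\colon\overline{\Omega}_\infty\to\Theta_\infty$ constructed in Section \ref{conjugacy} and reduce it to standard facts about Bernoulli measures on countable Markov shifts. First I would check that $\overline{s}$ is not merely a homeomorphism but intertwines the dynamics: $\overline{s}\circ\overline{T}_1=\sigma_{\mathcal{G}^{Z_\infty}}\circ\overline{s}$. This is immediate from Definition \ref{def7}, since if $\gamma\in\Omega_\infty$ has itinerary $s(\gamma)=(s_j)_{j\in\mathbb{N}}$ then $T_1(\gamma)(\cdot)=\gamma(\cdot+1)$ visits $I_{s_{j+1}}$ at time $j$ — this is exactly the computation displayed right after Definition \ref{def8} — so $s(T_1(\gamma))=\sigma_{\mathcal{G}^{Z_\infty}}(s(\gamma))$, and passing to classes gives the relation. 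Combined with Proposition $9$ of \cite{andre1}, this exhibits a topological conjugacy; in particular $\overline{s}$ is a bijection with $\overline{s},\overline{s}^{-1}$ measurable for the Borel $\sigma$-algebras $\mathcal{D}$ and $\mathcal{B}$.

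Next I would verify the two clauses of Definition \ref{transf} with $L=\overline{s}$. For $(i)$ one may take $A=\overline{\Omega}_\infty$ and $B=\Theta_\infty$ (or, to be safe, remove from each side the invariant $med$-null set of trajectories meeting a singular tangency or coded non-injectively, which $\overline{s}$ maps onto one another); these are invariant and of full measure. Clause $(ii)$ requires $L,L^{-1}$ measurable and $L\circ\overline{T}_1=\sigma_{\mathcal{G}^{Z_\infty}}\circ L$, both already in hand, together with the push-forward identity $L_{\ast}\,med=\mu_\pi$. Everything therefore reduces to showing $\overline{s}_{\ast}\,med=\mu_\pi$. Since cylinders generate $\mathcal{B}$, it suffices to prove $med\big(\overline{s}^{-1}[a_0a_1\cdots a_{k-1}]\big)=p_{a_0}p_{a_1}\cdots p_{a_{k-1}}$ for every admissible block. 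Normalizing representatives to start at a two-fold $(p_j,0)$ via Proposition \ref{obs representante}, this preimage is the family of trajectory classes passing successively through $I_{a_0},\dots,I_{a_{k-1}}$. Because $Z_\infty$ preserves Lebesgue measure (end of Section \ref{secao teoria basica CVSPs}) and $X^{\pm}_\infty$ are invariant under the unit horizontal translation, the proportion of volume leaving a given arc into each admissible neighbour is well defined and independent of $j$; calling $\pi=\{p_a\}$ the resulting probability vector, a computation of the transported volume down the branching tree — in which successive branchings obey the same law, so the code coordinates are i.i.d.\ for $med$ — yields the product formula. The hypotheses that $\mathcal{A}_{\mathcal{G}^{Z_\infty}}^{\mathbb{N}}$ is topologically mixing and $\mathcal{G}^{Z_\infty}$ positively recurrent are what make $\mu_\pi$ an honest $\sigma_{\mathcal{G}^{Z_\infty}}$-invariant probability (full mass, no escape of mass), and the assumed ergodicity of $\mu_\pi$ upgrades the equivalence to an ergodic one. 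This shows $\overline{T}_1$ is a Bernoulli map.

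For the last assertion, recall that a Bernoulli shift is mixing of all orders, and when the coding Markov shift is additionally irreducible and aperiodic there is no surviving periodic obstruction, so $(\Theta_\infty,\mathcal{B},\mu_\pi,\sigma_{\mathcal{G}^{Z_\infty}})$ is strongly mixing (see Appendix \ref{apen}). Strong mixing is preserved by ergodic equivalence, hence $\overline{T}_1$ is strongly mixing, and therefore weakly mixing and ergodic by Definition \ref{weakly-strong-mix}.

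The main obstacle is the push-forward identity $\overline{s}_{\ast}\,med=\mu_\pi$. It requires, first, pinning down what ``$med$'' means on the non-compact quotient $\overline{\Omega}_\infty$ — namely the image of Lebesgue measure on $M$ under the coding of trajectories — and, second, showing that the branching at the two-fold points produces a genuinely \emph{product} rather than merely Markov structure. This is precisely where volume preservation of $Z_\infty$ together with its translation symmetry are essential, and where the recurrence/mixing hypotheses on $\mathcal{G}^{Z_\infty}$ are needed to keep $\mu_\pi$ a probability measure.
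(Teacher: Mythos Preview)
Your proof and the paper's share the same skeleton---transport everything through the conjugacy $\overline{s}$ and pull strong mixing back from the shift side---but diverge on the one substantive point, the identification $\overline{s}_\ast\,med=\mu_\pi$. You propose a direct computation on cylinders: use volume preservation of $Z_\infty$ together with the horizontal translation symmetry to argue that the proportion of mass branching each way at a two-fold is the same at every $p_j$, so the itinerary coordinates are i.i.d.\ under $med$ and the product formula follows. The paper instead argues abstractly: it sets $\widehat\mu=\overline{s}^{-1}_\ast\mu_\pi$, exhibits an exhausting sequence of finite-$med$-measure sets in $\mathcal{D}$ (via Theorem~1.1 of \cite{papa}), and then invokes a uniqueness theorem for $\sigma$-finite measures (Theorem~2.4 of \cite{papa}) to conclude $med=\widehat\mu$ directly; the ergodicity hypothesis on $\mu_\pi$ is used to select an invariant set $C$ of full $\mu_\pi$-measure whose preimage $B=\overline{s}^{-1}(C)$ then has full $med$-measure, giving clause~$(i)$ of Definition~\ref{transf}. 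Your route is more concrete and keeps the PSVF geometry visible, at the cost of having to justify the i.i.d.\ claim carefully---which, as you correctly flag, first requires pinning down what $med$ means on the quotient $\overline{\Omega}_\infty$. The paper's route hides that same difficulty inside the cited uniqueness theorem, whose hypotheses (agreement on a generating $\pi$-system, or the like) would still need to be checked against whatever definition of $med$ on $\overline{\Omega}_\infty$ one adopts. In either approach the obstacle you isolate is indeed where the real content lies; the remainder (conjugacy, transport of strong mixing) is handled identically in both.
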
			

\begin{corollary}
	\label{cor2}
	With the hypotheses of the previous theorem, we get that $\overline{T}_1 : \overline{\Omega}_{\infty} \to \overline{\Omega}_{\infty}$ is topologically mixing.
\end{corollary}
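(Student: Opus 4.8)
The plan is to deduce the topological mixing of $\overline{T}_1$ from that of the shift $\sigma_{\mathcal{G}^{Z_\infty}}$ by transporting it through the conjugacy built in Section \ref{conjugacy}, using that topological mixing is a conjugacy invariant. Recall that by Proposition $9$ of \cite{andre1} the induced itinerary map $\overline{s} : \overline{\Omega}_\infty \to \Theta_\infty$ is a homeomorphism onto its image, and by the computation preceding Definition \ref{def10} it intertwines the two dynamics, $\overline{s}\circ \overline{T}_1 = \sigma_{\mathcal{G}^{Z_\infty}}\circ \overline{s}$. Thus $(\overline{\Omega}_\infty,\overline{T}_1)$ is topologically conjugate to $\bigl(\overline{s}(\overline{\Omega}_\infty),\, \sigma_{\mathcal{G}^{Z_\infty}}|_{\overline{s}(\overline{\Omega}_\infty)}\bigr)$.

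First I would check that $\overline{s}(\overline{\Omega}_\infty) = \Theta_\infty = \mathcal{A}_{\mathcal{G}^{Z_\infty}}^{\mathbb{N}}$. The inclusion $\overline{s}(\overline{\Omega}_\infty)\subseteq \Theta_\infty$ was verified right after Definition \ref{def7} (any itinerary satisfies $0<|s_j-s_{j+1}|\le 2$), and the reverse inclusion holds because, by construction, the edges of $\mathcal{G}^{Z_\infty}$ are exactly the admissible arc-to-arc transitions of $Z_\infty$: from $I_{2l}$ one may only pass to $I_{2l+1}$ or $I_{2l+2}$, and from $I_{2l+1}$ only to $I_{2l}$ or $I_{2l-1}$. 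Hence the constraint $|x_{j+1}-x_j|\le 2$ defining $\Theta_\infty$ is precisely the adjacency relation of $\mathcal{G}^{Z_\infty}$, so $\Theta_\infty$ coincides with the full one-sided topological Markov shift $\mathcal{A}_{\mathcal{G}^{Z_\infty}}^{\mathbb{N}}$, and every admissible sequence is realized as the itinerary of some global trajectory (Propositions \ref{pro} and \ref{obs representante}).

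Next, using the hypothesis of Theorem \ref{teo2} that $\mathcal{A}_{\mathcal{G}^{Z_\infty}}^{\mathbb{N}}$ is topologically mixing, together with the equivalence recalled in Section \ref{two-sided topological Markov shifts} between combinatorial topological mixing of $\mathcal{A}_{\mathcal{G}}^{\mathbb{N}}$ and topological mixing of the map $\sigma_{\mathcal{G}}$, I obtain that $\sigma_{\mathcal{G}^{Z_\infty}}$ is topologically mixing. Finally I would transport this property: given nonempty open sets $U,W\subseteq\overline{\Omega}_\infty$, the sets $\overline{s}(U),\overline{s}(W)$ are nonempty and open in $\mathcal{A}_{\mathcal{G}^{Z_\infty}}^{\mathbb{N}}$ (since $\overline{s}$ is a homeomorphism onto it), so there is $N=N(\overline{s}(U),\overline{s}(W))$ with $\sigma_{\mathcal{G}^{Z_\infty}}^{-n}(\overline{s}(W))\cap\overline{s}(U)\neq\emptyset$ for all $n\ge N$; applying the bijection $\overline{s}^{-1}$ and the intertwining relation $\overline{T}_1^{-n}(W)=\overline{s}^{-1}\bigl(\sigma_{\mathcal{G}^{Z_\infty}}^{-n}(\overline{s}(W))\bigr)$ yields $\overline{T}_1^{-n}(W)\cap U\neq\emptyset$ for all $n\ge N$, which is exactly the topological mixing of $\overline{T}_1$.

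The main obstacle is the second paragraph: one must be sure that the itinerary map is \emph{onto} $\mathcal{A}_{\mathcal{G}^{Z_\infty}}^{\mathbb{N}}$, i.e. that $\overline{s}(\overline{\Omega}_\infty)$ is not a proper — and possibly non-mixing — subsystem of the Markov shift; this is where the explicit geometry of $Z_\infty$ and the realizability statements (Propositions \ref{pro} and \ref{obs representante}) enter. It is worth remarking that the positive-recurrence and ergodicity assumptions carried over from Theorem \ref{teo2} play no role here: only the topological mixing of the associated countable Markov shift is used.
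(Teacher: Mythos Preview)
Your proposal is correct and follows essentially the same approach as the paper: transport the topological mixing of $\sigma_{\mathcal{G}^{Z_\infty}}$ to $\overline{T}_1$ through the conjugacy $\overline{s}$, using the equivalence (stated in Section~\ref{two-sided topological Markov shifts}) between combinatorial topological mixing of the Markov shift and topological mixing of the shift map. The paper's own proof is a one-line reference to Theorem~\ref{teo2} together with that equivalence; you have simply unpacked the conjugacy-transport step and, in addition, explicitly addressed the surjectivity $\overline{s}(\overline{\Omega}_\infty)=\Theta_\infty=\mathcal{A}_{\mathcal{G}^{Z_\infty}}^{\mathbb{N}}$, a point the paper leaves implicit. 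Your final remark that the positive-recurrence and ergodicity hypotheses of Theorem~\ref{teo2} are not actually needed here is also accurate.
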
	

\begin{corollary}[Variational Principle]
\label{cor-teo2}
Consider PSVF \eqref{zi}. Let $\psi: \overline{\Omega}_{\infty} \to \mathbb{R}$ be admissible and have summable variations. Consider 
the graph  $\mathcal{G}^{Z_{\infty}}$, whose trajectory arcs belong to $\overline{\Omega}_{\infty}$ and suppose that the coutable Markov shift associated with $\mathcal{G}^{Z_{\infty}}$ is topologically mixing. Then, $$P_G\left(Z_{\infty}\right) = \sup \left\lbrace h_{med}\left( \sigma_{\mathcal{G}^{Z^{\infty}}}\right) + \int \psi \,d(med)  \right\rbrace ,$$ where the supremum ranges over all shift invariant Borel probability measures $med$ such that $\psi$ is $med$-integrable, and $\left( h_{med}\left( \sigma_{\mathcal{G}^{Z^{\infty}}}\right), \int \, \psi \,d(med)\right) \neq (\infty,-\infty).$
\end{corollary}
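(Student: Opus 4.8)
The plan is to reduce the statement to the classical variational principle for the Gurevich pressure on a topologically mixing countable Markov shift, transporting everything through the conjugacy constructed in Section \ref{conjugacy}. Recall from that section that $\overline{s} : \overline{\Omega}_{\infty} \to \Theta_{\infty}$ is a homeomorphism onto its image which conjugates $\overline{T}_1$ with $\sigma_{\mathcal{G}^{Z_{\infty}}}$, i.e. $\overline{s} \circ \overline{T}_1 = \sigma_{\mathcal{G}^{Z_{\infty}}} \circ \overline{s}$. Under the standing hypothesis that $\mathcal{A}_{\mathcal{G}^{Z_{\infty}}}^{\mathbb{N}}$ is topologically mixing, $\overline{s}$ identifies the topological systems $(\overline{\Omega}_{\infty}, \overline{T}_1)$ and $(\Theta_{\infty}, \sigma_{\mathcal{G}^{Z_{\infty}}})$; pushing measures forward by $\overline{s}$ therefore gives an affine bijection between the $\overline{T}_1$-invariant Borel probability measures on $\overline{\Omega}_{\infty}$ and the $\sigma_{\mathcal{G}^{Z_{\infty}}}$-invariant ones on $\Theta_{\infty}$, and this bijection preserves Kolmogorov--Sinai entropy, $h_{med}(\overline{T}_1) = h_{\overline{s}_{\ast}med}(\sigma_{\mathcal{G}^{Z_{\infty}}})$, as well as integrals, $\int \psi \, d(med) = \int (\psi \circ \overline{s}^{-1}) \, d(\overline{s}_{\ast}med)$.

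Next I set $\widetilde{\psi} := \psi \circ \overline{s}^{-1} : \Theta_{\infty} \to \mathbb{R}$. Since $\overline{s}$ is a homeomorphism conjugating the two shifts, $\widetilde{\psi}$ inherits admissibility and summable variations from $\psi$, and in particular it satisfies Walters' condition. By the definition of $P_G(Z_{\infty})$ given just before Theorem \ref{teo1}, $P_G(Z_{\infty}) = P_G(\psi, \overline{T}_1, [\gamma_j]) = P_G(\widetilde{\psi}, \sigma_{\mathcal{G}^{Z_{\infty}}}, [\overline{s}(\gamma_j)])$, and since $\Theta_{\infty}$ is topologically mixing (hence transitive) and $\widetilde{\psi}$ satisfies Walters' condition, this quantity is independent of the chosen symbol and equals $P_G(\widetilde{\psi})$.

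Now I invoke the variational principle for the Gurevich pressure of an admissible potential with summable variations on a topologically transitive countable Markov shift (Sarig's theorem; see the references cited before Corollary \ref{cor-teo2}):
$$P_G(\widetilde{\psi}) = \sup\left\{ h_{\nu}(\sigma_{\mathcal{G}^{Z_{\infty}}}) + \int \widetilde{\psi} \, d\nu \right\},$$
the supremum ranging over all $\sigma_{\mathcal{G}^{Z_{\infty}}}$-invariant Borel probability measures $\nu$ with $\int \widetilde{\psi}\, d\nu > -\infty$ and with $(h_{\nu}(\sigma_{\mathcal{G}^{Z_{\infty}}}), \int \widetilde{\psi}\,d\nu) \neq (\infty, -\infty)$. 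Combining this with the entropy- and integral-preserving correspondence $\nu = \overline{s}_{\ast} med$ from the first paragraph, and identifying $\sigma_{\mathcal{G}^{Z^{\infty}}}$ in the statement with $\sigma_{\mathcal{G}^{Z_{\infty}}}$, yields precisely the asserted formula.

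The main obstacle is the first paragraph: one must verify carefully that $\overline{s}$, a priori only a topological conjugacy \emph{onto its image}, is in fact surjective onto $\Theta_{\infty}$ and a genuine Borel isomorphism, so that the pushforward truly is a bijection of the two simplices of invariant measures; surjectivity rests on Proposition \ref{pro} together with the combinatorial description of $\Theta_{\infty}$, while entropy preservation is then automatic. One should also confirm that the Lebesgue measure $med$ is itself $\overline{T}_1$-invariant and corresponds under $\overline{s}$ to the measure $\mu_{\pi}$ of Theorem \ref{teo2}, so that the left-hand side is genuinely the Gurevich pressure attached to $Z_{\infty}$ rather than to an abstract symbolic model. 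Once this dictionary is established, the remaining steps are a direct citation of the symbolic variational principle.
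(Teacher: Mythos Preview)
Your approach is essentially the same as the paper's: the paper's proof is a single sentence invoking Theorem \ref{teo2} (the ergodic equivalence between $(\overline{\Omega}_\infty,\overline{T}_1)$ and the symbolic system) together with Sarig's variational principle (Theorem 5.3 in \cite{sarig}), and you have simply spelled out that reduction explicitly. The technical caveats you raise in your final paragraph (surjectivity of $\overline{s}$ onto $\Theta_\infty$, the precise status of $med$) are points the paper's one-line proof does not address either; in the statement of the corollary, $med$ is used as a dummy variable ranging over all shift-invariant Borel probabilities, not specifically the Lebesgue measure, so your concern about $med$ being $\overline{T}_1$-invariant is moot.
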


\begin{theorem}
\label{teoo3}
Let $\mathcal{A}_{\mathcal{G}^{Z_{\infty}}}^{\mathbb{N }}$ be the topological Markov shift associated with the graph $\mathcal{G}^{Z_{ \infty}} $. If there exists a Lyapunov function $\mathcal{V}$, then we can find $0 < \theta < 1$ such that for any $\eta > \theta$ and any trajectory arcs $\gamma_i, \gamma_j$
$$\parallel Prob(\mathcal{ X}_m = \gamma_j| \mathcal{ X}_0 = \gamma_i)-\pi(\gamma_i)\parallel_{TV} \leq \mathcal{V}(\gamma_i) \dfrac{\eta^{m+1}}{\eta - \theta}, $$ where $\pi$ is the stationary distribution. In particular, the transient distribution converges to a stationary distribution exponentially fast and the topologiocal Markov shift is mixing.
\end{theorem}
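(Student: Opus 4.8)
The plan is to read the symbolic flow generated by $\mathcal{G}^{Z_{\infty}}$ as a countable‑state Markov chain and to deduce geometric ergodicity from the Foster--Lyapunov drift condition that $\mathcal{V}$ encodes, following the classical regeneration/coupling scheme adapted to an accessible atom. Concretely, the transition probabilities $w(\,\cdot\,|\,\cdot\,)$ attached to the edges of $\mathcal{G}^{Z_{\infty}}$ turn the vertex set $V$ (the trajectory arcs of $Z_{\infty}$) into a Markov chain $(\mathcal{X}_m)_{m\ge 0}$ with stochastic matrix $P=\bigl(w(b|a)\bigr)_{a,b\in V}$, and through the homeomorphism $\overline{s}$ of Section \ref{conjugacy} this is precisely the chain driving $\overline{T}_1$ on $\overline{\Omega}_{\infty}$. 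The hypothesis $w(a^{\ast}|a^{\ast})>0$ says that $\{a^{\ast}\}$ is an accessible atom and yields the one‑step minorization $P(a^{\ast},\cdot)\ge w(a^{\ast}|a^{\ast})\,\delta_{a^{\ast}}$, while the Lyapunov inequality, read with its drift factor $\lambda$, gives $P\mathcal{V}\le \lambda\,\mathcal{V}+b\,\mathds{1}_{\{a^{\ast}\}}$ with $b:=(P\mathcal{V})(a^{\ast})<\infty$.

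First I would record the consequences of the drift alone: iterating the bound gives $\mathbb{E}_{\gamma_i}[\mathcal{V}(\mathcal{X}_m)]\le \lambda^{m}\mathcal{V}(\gamma_i)+\tfrac{b}{1-\lambda}$, so in particular $\int\mathcal{V}\,d\pi<\infty$, and by Foster's criterion the chain is positive recurrent; together with irreducibility (strong connectedness of the subgraph through $a^{\ast}$) this gives existence and uniqueness of the stationary distribution $\pi$, which is the point at which one invokes \cite{mt}. More importantly, a standard generating‑function manipulation converts the drift bound into a geometric moment estimate for the first hitting time $\tau:=\inf\{m\ge 1:\mathcal{X}_m=a^{\ast}\}$, namely $\mathbb{E}_{\gamma_i}[r^{\tau}]\le C_0\,\mathcal{V}(\gamma_i)$ for some $r>1$ and $C_0<\infty$ depending only on $\lambda$ and $b$.

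Next I would run the coupling. Let $(\mathcal{X}_m)$ start at $\gamma_i$ and let $(\mathcal{X}'_m)$ start from $\pi$; evolve the two copies independently until the first time they sit simultaneously at $a^{\ast}$, and at each such simultaneous visit attempt to merge them through the residual part of the minorization, which succeeds with probability $w(a^{\ast}|a^{\ast})$. The geometric hitting‑time estimate controls the first simultaneous visit to $a^{\ast}$, while the number of failed merge attempts is geometric and independent of everything else, so the coupling time $\mathsf{T}$ satisfies $\mathbb{P}_{\gamma_i}(\mathsf{T}>m)\le c\,\mathcal{V}(\gamma_i)\,\theta^{m}$ for some $0<\theta<1$ built from $\lambda$ and $w(a^{\ast}|a^{\ast})$. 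Fed into the coupling inequality $\bigl\|\,\mathrm{Prob}(\mathcal{X}_m\in\,\cdot\mid\mathcal{X}_0=\gamma_i)-\pi\,\bigr\|_{TV}\le \mathbb{P}_{\gamma_i}(\mathsf{T}>m)$ and combined, through the first‑entrance decomposition, with the rate‑$\eta$ decay available for any $\eta>\theta$, the resulting finite convolution is dominated by the geometric series $\sum_{j\ge 0}\eta^{m-j}\theta^{j}=\eta^{m+1}/(\eta-\theta)$, giving the asserted bound $\mathcal{V}(\gamma_i)\,\eta^{m+1}/(\eta-\theta)$. Since this in particular forces $P^{m}(\gamma_i,\gamma_j)\to\pi(\gamma_j)$ for every pair of arcs, the chain --- equivalently $\overline{T}_1$, equivalently $\sigma_{\mathcal{G}^{Z_{\infty}}}$, which is moreover aperiodic because of the self‑loop at $a^{\ast}$ --- is mixing.

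I expect the only genuinely delicate step to be the passage from the drift inequality to the geometric moment bound on $\tau$, and then to the coupling time, with constants that collapse into the clean closed form $\eta^{m+1}/(\eta-\theta)$; a less explicit alternative is simply to verify the drift‑plus‑minorization hypotheses and quote the $\mathcal{V}$‑uniform (geometric) ergodicity theorem from \cite{mt,meyn} verbatim. Everything else --- the Markov‑chain reformulation, positive recurrence, uniqueness of $\pi$, and the deduction of mixing from geometric ergodicity --- is either routine bookkeeping or a direct citation.
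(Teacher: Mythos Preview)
Your proposal is correct. The paper's own proof is far more terse: it observes that the existence of a Lyapunov function yields a stationary distribution by \cite{meyn}, and then simply invokes Theorems~2.1 and~2.2 of \cite{mt} to obtain the displayed bound. In other words, the ``less explicit alternative'' you mention at the end --- verifying the drift-plus-minorization hypotheses and quoting the $\mathcal{V}$-uniform geometric ergodicity theorem verbatim --- is precisely the paper's proof. What you have written above that is a correct unpacking of the coupling/regeneration argument underlying those cited results, and it buys a self-contained understanding of where the constant $\eta^{m+1}/(\eta-\theta)$ comes from; the paper, by contrast, treats the whole statement as a black-box consequence of the Meyn--Tweedie framework.
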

								
The previous results where established for very specific PSVFs. However, they can be extended to a bigger class of PSVFs following the next two results:

\begin{theorem}
\label{teo-princ}
Let $\widehat{Z}$ be a PSVF defined over a compact $2$-dimensional surface $M$ and let $\overline{\Upsilon}_{\infty}=\Upsilon_{\infty}/\sim$, where $ \Upsilon_{\infty} = \{\gamma : \mbox{positive global trajectory of}\; \widehat{Z}\; with\; \gamma(0) \in \overset{\infty}{\underset{i = 0}\bigcup} I_i, \;\mbox{where}\; I_i \;\; \mbox{is }$\\ $\mbox{an arc of}\;\mbox{trajectory} \} $ and $\sim$ is the equivalence relation given in Definition \ref{def8}.
If  $\widehat{Z}$  presents a $\infty$-homoclinic loop, the system $(\overline{\Upsilon}_{\infty},\mathcal{O}, med, \overline{\widehat{T}}_1)$ is ergodically equivalent to the system $(\mathcal{A}_{\mathcal{G}^{Z_{\infty}}}^{\mathbb{N}}, \mathcal{B}, \mu_{ \pi}, \sigma_{\mathcal{G}^{Z_{\infty}}})$ where  $\overline{\widehat{T}}_1$ is the time-one map associated with $\widehat{Z}$ and $\mathcal{O}$ is the $\sigma_{\mathcal{G}^{Z_{\infty}}}$-Borel algebra in $\overline{\Upsilon}_{\infty}.$ Furthermore, $\overline{\widehat{T}}_1$ is strong-mixing and therefore weakly-mixing and ergodic.

\begin{corollary}
\label{cor-teo5}
With the hypotheses of the previous theorem, we get that $\overline{\widehat{T}}_1 : \overline{\Upsilon}_{\infty} \to \overline{\Upsilon}_{\infty}$ is topologically mixing. Furthermore, 
\begin{enumerate}
\item[$(i)$] If $\widehat{\psi} : \widehat{\Omega}\subseteq \overline{\Upsilon}_{\infty} \to \mathbb{R}$ is admissible and have summable variations, then $P_G\left(Z|_{\widehat { \Omega}}\right)<\infty $ and the same happens with the Gurevich entropy, $h_G\left(Z|_{\widehat{\Omega}}\right)$;
\item [$(ii)$] $P_G\left(\widehat{Z}\right) = \sup \left\lbrace h_{med}\left( \sigma_{\mathcal{G}^{\widehat{Z}}}\right) + \int \widehat{\psi} \,d(med)  \right\rbrace ,$ where the supremum ranges over all shift invariant Borel probability measures $med$ such that $\widehat{\psi}$ is $med$-integrable, and $\left( h_{med}\left( \sigma_{\mathcal{G}^{\widehat{Z}}}\right), \int\, \widehat{\psi} \,d(med)\right) \neq (\infty,-\infty).$
\item [$(iii)$] Consider $\mathcal{A}_{\mathcal{G}^{\widehat{Z}}}^{\mathbb{N}}$ the topological Markov shift gas associated with the graph $\mathcal{G}^{\widehat{Z}} $ with stationary distribution $\pi$. If there exists a Lyapunov function $\mathcal{\widehat{V}}$ then we can find $0 < \theta < 1$ such that for any $\eta > \theta$ and any trajectory arcs $\widehat{\gamma}_i, \widehat{\gamma}_j$
$$\parallel Prob(\mathcal{X}_m = \widehat{\gamma}_j| \mathcal{ X}_0 = \widehat{\gamma}_i)-\pi(\widehat{\gamma}_i)\parallel_{TV} \leq \mathcal{\widehat{V}}(\widehat{\gamma}_i) \dfrac{\eta^{m+1}}{\eta - \theta}. $$
\end{enumerate}	

\end{corollary}

\end{theorem}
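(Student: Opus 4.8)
The plan is to transport the entire structure already established for $Z_\infty$ along a $\Sigma$-equivalence coming from the $\infty$-homoclinic loop, and then invoke the results of Section \ref{conjugacy} and Theorem \ref{teo2} verbatim. First I would use the hypothesis that $\widehat Z$ presents an $\infty$-homoclinic loop: by definition this produces infinitely many visible-visible two-folds $\widehat p_1,\widehat p_2,\dots$ on the switching manifold together with a global trajectory that, after passing through $\widehat p_i$, returns to $\Sigma$ at $\widehat p_{i\pm1}$. These two-folds cut the invariant set $\bigcup_i I_i$ into arcs $\widehat I_n$ exactly in the way that $\Lambda_\infty$ was cut into the arcs $I_n = I_{2j},I_{2j+1}$ in Section \ref{conjugacy}. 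The combinatorics of admissible transitions between these arcs (an arc goes to the one ``below/right'' or ``above/left'') is forced by Definitions \ref{def2} and \ref{def1} in exactly the same way as for $Z_\infty$, so the graph $\mathcal G^{\widehat Z}$ is isomorphic as a labelled directed graph to $\mathcal G^{Z_\infty}$, and consequently $\mathcal A^{\mathbb N}_{\mathcal G^{\widehat Z}}$ is the same countable Markov shift as $\mathcal A^{\mathbb N}_{\mathcal G^{Z_\infty}}$.

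Next I would reproduce the constructions of Section \ref{conjugacy} for $\widehat Z$: define $\Upsilon_\infty$ and the itinerary map $\widehat s:\Upsilon_\infty\to\Theta_\infty$ as in Definition \ref{def7}, quotient by $\sim$ to get $\overline\Upsilon_\infty$, choose in each class the representative starting at a two-fold (the analogue of Proposition \ref{obs representante}), and equip $\overline\Upsilon_\infty$ with the Hausdorff-type metric of Definition \ref{def10}. The analogue of Proposition \ref{pro} — that every unit time-window contains a unique passage through a two-fold — holds here because, along the $\infty$-homoclinic loop, between consecutive two-folds the trajectory is a single smooth arc traversed in a time that (after a smooth reparametrization, harmless for the itinerary and for measure-theoretic isomorphism) we may normalize; this gives that $\overline s$ is a well-defined homeomorphism onto its image and conjugates $\overline{\widehat T}_1$ with $\sigma_{\mathcal G^{\widehat Z}}$, just as Propositions $7$–$9$ of \cite{andre1} do for $Z_\infty$. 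Since $\widehat Z$ inherits the volume-preservation hypothesis ($div(\widehat X^\pm)=0$, hence $med$ is invariant by Corollary A of \cite{nova}), the measure $med$ on $\overline\Upsilon_\infty$ pushes forward under $\overline s$ to a shift-invariant measure which, by the same computation of cylinder measures as in Section \ref{two-sided topological Markov shifts}, is the Bernoulli measure $\mu_\pi$. Then Theorem \ref{teo2} applies word for word to $\mathcal A^{\mathbb N}_{\mathcal G^{\widehat Z}}=\mathcal A^{\mathbb N}_{\mathcal G^{Z_\infty}}$, yielding that $(\overline\Upsilon_\infty,\mathcal O,med,\overline{\widehat T}_1)$ is ergodically equivalent to $(\mathcal A^{\mathbb N}_{\mathcal G^{Z_\infty}},\mathcal B,\mu_\pi,\sigma_{\mathcal G^{Z_\infty}})$, i.e. $\overline{\widehat T}_1$ is a Bernoulli map, and — using aperiodicity and irreducibility of the shift — strong-mixing, hence weak-mixing and ergodic.

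For the corollary, item $(i)$ follows by applying Theorem \ref{teo1} to the subgraph $\mathcal H^{\widehat Z}=\mathcal G^{\widehat Z}|_{\widehat\Omega}$ (topological mixing of the restriction being the standing hypothesis), giving $P_G(Z|_{\widehat\Omega})<\infty$ and hence $h_G(Z|_{\widehat\Omega})<\infty$; item $(ii)$ is Corollary \ref{cor-teo2} (the variational principle) transported along the conjugacy; item $(iii)$ is Theorem \ref{teoo3} applied to $\mathcal A^{\mathbb N}_{\mathcal G^{\widehat Z}}$ once a Lyapunov function $\widehat{\mathcal V}$ is assumed to exist. The main obstacle I anticipate is not any of these transfers, which are formal, but the first step: verifying carefully that the $\infty$-homoclinic loop really forces the arc-transition graph $\mathcal G^{\widehat Z}$ to coincide with $\mathcal G^{Z_\infty}$ — in particular that no extra tangencies, sliding regions, or finite-escape-time behaviour of $\widehat Z$ disturb the "unique two-fold crossing per unit time" property, so that the time-one map is genuinely well defined on $\overline\Upsilon_\infty$ and the reparametrization used to normalize inter-two-fold travel times does not destroy volume-preservation up to the measure-isomorphism we need. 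This is where the hypotheses on $\widehat Z$ (compact surface, $\infty$-homoclinic loop, divergence-free pieces) have to be used in full.
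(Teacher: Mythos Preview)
Your proposal is essentially correct and follows the same strategy as the paper: transport the $Z_\infty$ machinery to $\widehat Z$ via the $\infty$-homoclinic loop, then invoke Theorem \ref{teo2} and read off the corollary items from Theorem \ref{teo1}, Corollary \ref{cor1}, Corollary \ref{cor-teo2}, and Theorem \ref{teoo3}. The only difference is one of economy: the paper does not rebuild the itinerary map, the representative-choosing proposition, or the arc-transition graph for $\widehat Z$ from scratch, nor does it wrestle with the reparametrization and ``no extra tangencies'' issues you flag at the end---instead it cites item $(iii)$ of Theorem~B of \cite{andre1}, which asserts directly that any PSVF with an $\infty$-homoclinic loop is $\Sigma$-equivalent to $Z_\infty$, and then transfers the ergodic equivalence of Theorem \ref{teo2} along that $\Sigma$-equivalence in one line. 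So the technical obstacle you identify as the ``main obstacle'' is already packaged in that external reference; your reconstruction is a valid (and more self-contained) route to the same conclusion, but the paper simply outsources it.
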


\section{Proof of the main results}\label{secao prova resultados principais}	

\subsection{ Proof of Theorem \ref{teo1} }
\begin{proof}
First, by Theorem $4.3$ \cite{sarig}, $P_G\left(Z_{\infty}|_{\widetilde{\Omega}}\right)$ exists for every trajectory arc $\gamma_1$, and is independent of the choice of $\gamma_j$. Now, let $\gamma_1, \gamma_2 \in \widetilde{\Omega}$, since $\psi$ is admissible and the topological Markov shift is topologically mixing, $$\mathcal{L}_{\psi}{\mathds{1}}(\gamma_1) =\underset{\sigma_{\mathcal{G}}^n\left(\gamma_2\right) =\gamma_1}{\sum} e^{\psi\left({\gamma_2}\right)}{\Large \mathds{1}}_{[\gamma_j]}(\gamma_2) < \infty, \quad \mbox{where}\quad  \gamma_j \quad\mbox{is any arc of trajectory}.$$ Thus, $$\left|\mathcal{L}_{\psi}{\mathds{1}}(\gamma_1)\right| = \left|\underset{\sigma_{\mathcal{G}}^n\left(\gamma_2\right) =\gamma_1}{\sum} e^{\psi\left({\gamma_2}\right)}{\Large \mathds{1}}_{[\gamma_j]}(\gamma_2)\right| < \infty.$$ Therefore, there exists $K>0$ such that $\left|\mathcal{L}_{\psi}\mathds{1}(\gamma)\right|< K.$ This way, $$\parallel\mathcal{L}_{\psi}\mathds{1}\parallel_{\infty} < \infty.$$ Therefore, again by Theorem $4.3$ \cite{sarig},  $P_G\left(Z_{\infty}|_{\widetilde{\Omega}}\right) < \infty.$
\end{proof}

\subsection{ Proof of Corollary \ref{cor1} }
\begin{proof}
The proof follows directly from the previous Theorem \ref{teo1} and the fact that the Gurevich entropy is equal to the Gurevich pressure for the null identical potential, i.e., $\psi\equiv 0$.
	
\end{proof}
								
\subsection{ Proof of Theorem \ref{teo2} }
\begin{proof}
Since $\mathcal{A}_{\mathcal{G}^{Z_{\infty}}}^{\mathbb{N}}$ is topologically mixing, it follows that $\mathcal{A}_{\mathcal{G }^{Z_{\infty}}}^{\mathbb{Z}}$ is topologically transitive and therefore $\mathcal{G}^{Z_{\infty}}$ is strongly connected. Furthermore, by the Corollary \ref{cor1}, $0< h_G(Z_{\infty})<\infty$, since $\mathcal{A}_{\mathcal{G}^{Z_{\infty}}} ^{\mathbb{N}}$ is topologically mixing and $\psi\equiv 0$ is admissible and has summable variations. Therefore, by Theorem $7$ \cite{gurevi} $\mathcal{A}_{\mathcal{G}^{Z_{\infty}}}^{\mathbb{N}}$ admits a single maximum invariant probability measure entropy $\mu_{\pi}$.
	
Now let $C \subset \mathcal{A}_{\mathcal{G}^{Z_{\infty}}}^{\mathbb{N}}$ be measurable and invariant in $\sigma_{\mathcal{G}} ^{Z_{\infty}}$-Borel algebra $\mathcal{B}$. Since $\mu_{\pi}$ is ergodic we can assume without loss of generality that $\mu_{\pi}(C) = 1.$	Note that from Proposition $9$ \cite{andre1} the map $\overline{s}^{-1}: \overline{s}(\overline{\Upsilon}_{\infty}) \to \overline{\Upsilon}_{\infty} $ is a topological conjugacy, i.e., $\overline{T}_1 \circ \overline{{s}}^{-1}= \overline{{s}}^{-1}\circ \sigma_{\mathcal{G}^{Z_{\infty}}}$.

Taking $B= \overline{{s}}^{-1}(C) \subset \overline{\Upsilon}_{\infty }$, then we have that \[\overline{T}_1(B) = (\overline{{s}}^{-1} \circ \sigma_{\mathcal{G}^{Z_{\infty}}} \circ \overline{{s}})(B) =  \overline{{s}}^{-1} ( \sigma_{\mathcal{G}^{Z^{\infty}}}(\overline{{s}}(B))) \subset  \overline{{s}}^{-1}(C) = B.\]As every $\sigma_{\mathcal{G}^{Z^{\infty}}}$-algebra is also an algebra, it follows from Theorem $1.1$ \cite{papa} that there are  $\widehat{\Upsilon}_1, \widehat{\Upsilon}_2, \cdots \in \mathcal{D}$, with $ \widehat{\Upsilon}_i \subseteq  \widehat{\Upsilon}_{i+1}$ such that $\mathcal{D} = \underset {i=1}{\overset{\infty}{\bigcup}} \widehat{\Upsilon}_i$ and $med(\widehat{\Upsilon}_i ) < \infty$, for all $i$. Put $\widehat{\mu} = \overline{{s}}^{-1}_{\ast}\mu_{\pi}$ which is a measure in $\overline{\Upsilon}_k$. Since $\overline{{s}}^{-1} : \overline{{s}}(\overline{\Upsilon}_{\infty}) \to \overline{\Upsilon}_{\infty} $ is a bijection, it follows that $\widehat{\mu} (\widehat{\Upsilon}_i ) < \infty$ , for all $i$. Therefore, by Theorem $2.4$ \cite{papa}, we get $med = \widehat{\mu}.$ In this way, it follows that \[med(B) = \overline{{s}}^{-1}_{\ast}\mu_{\pi}(B)= \mu_{\pi}((\overline{{s}}^{-1})^{-1}(B)) = \mu_{\pi}(\overline{{s}}(B)) = \mu_{\pi}(C) = 1.\]Therefore, $\overline{{s}}^{-1}$ is a bijection, restricted to a subset of total measure, and both it and its inverse are measurable.
In this way, $\overline{T}_1$ is a Bernoulli map, that is, the systems $(\overline{\Upsilon}_{\infty},\mathcal{D}, med, \overline{T}_1)$ and $(\mathcal {A}_{\mathcal{G}^{Z_{\infty}}}^{\mathbb{N}}, \mathcal{B}, \mu_{\pi}, \sigma_{\mathcal{G}^{Z_{\infty}}})$ are equivalent.

In order to conclude the result, it is enough to show that $\overline{T}_1$ is strong-mixing, becuase a strong-mixing map is also weakly-mixing and all weakly-mixing is ergodic. But, we have that $\overline{T}_1$ is strong-mixing, since $\sigma_k^{+}$ is strong-mixing. Indeed, given any measurable sets $\Omega_A,\Omega_B  \in \mathcal{D}$ we get
$$med\left( (\overline{T}_1)^{-n}(\Omega_A) \cap \Omega_B\right)= \mu_{\pi}\left((\overline{{s}}^{-1})^{-1}\left((\overline{T}_1)^{-n}(\Omega_A) \cap \Omega_B \right) \right)=  $$
$$\mu_{\pi}\left( (\sigma_{\mathcal{G}^{\infty}})^{-n}(\overline{{s}}(\Omega_A))\cap\overline{{s}}(\Omega_B)\right) \to \mu_{\pi}\left(\overline{{s}}(\Omega_A)\right)\mu_{\pi}\left(\overline{{s}}(\Omega_B)\right) = med(\Omega_A) med(\Omega_B),$$
when $n \to \infty.$
\end{proof}

\begin{remark}
If $\mu_{\pi}$ is not ergodic it will still be an invariant probability measure and from Theorem \ref{teo2} we can conclude that $med$ will also be an invariant probability measure and more than that unique.
\end{remark}	

\subsection{ Proof of Corollary \ref{cor2} }
\begin{proof}
The proof follows directly from the previous Theorem \ref{teo2}, since  $\mathcal {A}_{\mathcal{G}^{Z_{\infty}}}^{\mathbb{N}}$ is topologically mixing iff $\sigma_{\mathcal{G}^{Z_{\infty}}}$ is topologically mixing.
\end{proof}

\subsection{ Proof of Corollary \ref{cor-teo2} }
\begin{proof}
The proof follows directly from Theorem \ref{teo2}, together with Theorem $5.3$ \cite{sarig}.
\end{proof}
\subsection{ Proof of Theorem \ref{teoo3} }
\begin{proof}
Since there is a Lyapunov function, it follows from \cite{meyn} that there is a stationary distribution $\pi$. Therefore, according to Theorems $2.1$ and $2.2$ of \cite{mt}, the result follows.
\end{proof}
\subsection{ Proof of Theorem \ref{teo-princ} }
\begin{proof}
By item $(iii)$  of Theorem $B$ \cite{andre1}, it follows that $Z_{\infty}$ is $\Sigma$-equivalent to $Z$. Furthermore, by Theorem \ref{teo2}  we have that $(\overline{\Theta}_{\infty},\mathcal{D}, med, \overline{{T}}_1)$ is ergodically equivalent to $( \mathcal{A}_{\mathcal{G}^{Z_{\infty}}}^{\mathbb{N}}, \mathcal{B}, \mu_{\pi}, \sigma_{\mathcal{G}^{Z_{\infty}}})$. Therefore, since $Z_{\infty}$ is $\Sigma$-equivalent to $Z$, follows that $(\overline{\Upsilon}_{\infty},\mathcal{O}, med, \overline{\widehat{T}}_1)$ is ergodically equivalent to $( \mathcal{A}_{\mathcal{G}^{Z_{\infty}}}^{\mathbb{N}}, \mathcal{B}, \mu_{\pi}, \sigma_{\mathcal{G}^{Z_{\infty}}})$.
\end{proof}
\subsection{ Proof of Corollary \ref{cor-teo5} }
\begin{proof}
Given the hypotheses of Theorem \ref{teo-princ}, we simply reproduce the proofs of the Corollary \ref{cor2} of Theorem \ref{teo2}, to conclude that $\overline{\widehat{T}}_1 : \overline{ \Upsilon}_{\infty} \to \overline{\Upsilon}_{\infty}$ is topologically mixing.

Now as the Theorem \ref{teo-princ} holds and by hypothesis $\widehat{\psi} : \widehat{\Omega}\subseteq \overline{\Upsilon}_{\infty} \to \mathbb{R}$ is admissible with summable variations, the item $(i)$ follows directly from Theorem \ref{teo1} and its Corollary \ref{cor1}.
The item $(ii)$ follows directly from Corollary \ref{cor-teo2}.
Item $(iii)$ follows directly from Theorem \ref{teoo3}.
\end{proof}

The Theorem $2.8$ \cite{sarig} allows to reduce problems on two-sided shifts to problems for one-sided shifts. Therefore, we present the following example. 

\begin{example}
	\label{exemp0}
Let $\mathcal{G}^{Z_{\infty}}$ be the graph whose vertices are the trajectory arcs of the PSVF $Z_{\infty}$. Consider $\mathcal{A}_{\mathcal{G}^{Z_{\infty}}}^{\mathbb{N}}$ the topological Markov shift associated with $\mathcal{G}^{Z_{\infty}}$, with the time-one  map $\overline{T}_1$. For the potential $\psi :  \mathcal{A}_{\mathcal{G}^{Z_{\infty}}}^{\mathbb{N}} \to \mathbb{R}$ given by $\psi\left( I\right) = \log \left(I_0(I_0 + 1)\right), \, \mbox{where}, \, I=\left(I_0, I_1, I_2, \cdots\right) $, it follows from Example $1$ \cite{barreira} that 
$$ P_G(-\psi) = \log \left( \overset{\infty}{\underset{n = 1}{\sum}} \dfrac{1}{n(n+1)}\right) = \log(1) = 0.$$

\end{example}

\section{Hausdorff dimension for $Z_{\infty}$}
\label{hausdorf-dim}
Strongly positive recurrent topological Markov shifts have the property that the entropy is concentrated within the system and not close to infinity. In fact, given a graph $\mathcal{G}$ associated with topological Markov shifts. Gurevich and Zargaryan \cite{gurevizar} showed that an equivalent condition for strongly positive recurrent topological Markov shifts is the existence of a connected finite subgraph $\mathcal{H} \subset \mathcal{G}$ such that there are more paths inside than outside of $\mathcal{H}$ (in terms of exponential growth). On the other hand, for graphs that are not strongly positive recurrent entropy is supported by the infinite paths that spend most of their time outside a finite subgraph (see \cite{ruet} , Proposition $3.3$).
Following the lines of observations \cite{iommi} (see Proposition 2.20) the authors characterize strongly positive recurrent topological Markov shifts as those that have topological entropy at infinity (see Definition \ref{def-entrop-topo-inf}) strictly smaller than topological entropy.	
In this section we will relate the topological entropy at infinity of the global trajectories of $Z_{\infty}$ with the Hausdorff dimension of the set of recurrent points that escape on average.

\begin{definition}
\label{def-entrop-topo-inf}
Let $\left( \mathcal{A}_{\mathcal{G}^{Z_{\infty}}}^{\mathbb{N}}, \sigma_{\mathcal{G}^{Z^{\infty}}}\right) $ be a topological Markov shift. Let $\mathcal{M}, q \in \mathbb{N}$. For $n \in \mathbb{N}$ let $z_n(\mathcal{M}, q)$ be the number of cylinders of the form $[a_0, \cdots, a_{n+1}]$, where $a_0\leq q, a_{n+1} \leq q,$ and $$\#\left\lbrace i \in \left\lbrace 0,1, \cdots, n+1\right\rbrace : a_i \leq q  \right\rbrace \leq \dfrac{n+2}{\mathcal{M}}. $$
Define
$$h_{\infty}(\mathcal{M}, q) := \underset{n \to \infty}{\limsup} \dfrac{1}{n}\log z_n \left(\mathcal{M},q \right) $$ and $$h_{\infty}(q) := \underset{\mathcal{M} \to \infty}{\liminf} \,h_{\infty} \left(\mathcal{M}, q \right). $$
The \textbf{topological entropy at infinity} of $\left( \mathcal{A}_{\mathcal{G}^{Z_{\infty}}}^{\mathbb{N}}, \sigma_{\mathcal{G}^{Z^{\infty}}}\right) $ is defined by $h_{\infty} := \underset{q \to \infty}{\liminf}\,h_{\infty}(q).$
\end{definition}

\begin{definition}
Let $\left( \mathcal{A}_{\mathcal{G}^{Z_{\infty}}}^{\mathbb{N}}, \sigma_{\mathcal{G}^{Z^{\infty}}}\right) $ be a topological Markov shift, the set of points that Let $\left( \mathcal{A}_{\mathcal{G}^{Z_{\infty}}}^{\mathbb{N}}, \sigma_{\mathcal{G}^{Z^{\infty}}}\right) $ be a topological Markov shift \textbf{escape on average} is defined by $$E:= \left\lbrace \gamma \in \mathcal{A}_{\mathcal{G}^{Z_{\infty}}}^{\mathbb{N}}: \underset{n \to \infty}{\lim} \overset{n-1}{\underset{i = 0 }{\sum}} \mathds{1}_{[\gamma_j]}  = \left( \sigma^{i}(\gamma)\right) = 0\;\; \mbox{for every}\;\; \gamma_j \in  \mathcal{A}_{\mathcal{G}^{Z_{\infty}}} 
 \right\rbrace. $$
We say that $\gamma \in \mathcal{A}_{\mathcal{G}^{Z_{\infty}}}^{\mathbb{N}}$ is a \textbf{recurrent} point if there exists an increasing sequence $(n_k)_{k \in \mathbb{N}}$ such that $ \underset{n \to \infty}{\lim}\sigma^{n_k}(\gamma) = \gamma.$ The set of recurrent points is denoted by $\mathcal{R}$.
\end{definition}

\begin{theorem}
	\label{teo-dim-haus-1}
Let $\left( \mathcal{A}_{\mathcal{G}^{Z_{\infty}}}^{\mathbb{N}}, \sigma_{\mathcal{G}^{Z_{\infty} }}\right) $ be a topological Markov shift, topologically transitive with finite Gurevich entropy, then $$dim_{H} \left( \mathcal{R} \right) \leq \dfrac{h_{G}\left( \mathcal{A}_{\mathcal{G}^{Z_{\infty}}}^{\mathbb{N}},\sigma_{\mathcal{G}^{Z_{\infty}}}\right)}{\log(2)}. $$
\end{theorem}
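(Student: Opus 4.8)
The plan is a covering argument in the symbolic metric \eqref{metrica}. Write $h:=h_{G}(\mathcal{A}_{\mathcal{G}^{Z_{\infty}}}^{\mathbb{N}},\sigma_{\mathcal{G}^{Z_{\infty}}})$; I aim to show that for every $s>h/\log 2$ there are covers of $\mathcal{R}$ by cylinders whose total $s$-content tends to $0$, whence $\mathcal{H}^{s}(\mathcal{R})=0$ and $\dim_{H}\mathcal{R}\le s$. I would first record two preliminary facts. \emph{(i)} Topological transitivity makes the underlying graph strongly connected, so by the Proposition of \cite{vere} together with \cite{gurev} one has $R_{ab}(\mathcal{G})=R(\mathcal{G})=e^{-h}$ for all $a,b$ and $\limsup_{n}\frac1n\log p^{\mathcal{G}}_{aa}(n)=h$; since $h<\infty$, for each $\varepsilon>0$ there is $C_{\varepsilon}>0$ with $p^{\mathcal{G}}_{aa}(n)\le C_{\varepsilon}e^{(h+\varepsilon)n}$ for every $n$. \emph{(ii)} By construction only transitions $a\to b$ with $|a-b|\le 2$ are admissible in $\mathcal{G}^{Z_{\infty}}$ (this is the adjacency of consecutive trajectory arcs underlying $\Theta_{\infty}$, see the discussion following Definition \ref{def7}); hence if $x,y$ agree on the coordinates $0,\dots,N$ then $x_{N}=y_{N}$ and so $|x_{N+k}-y_{N+k}|\le 4k$ for all $k\ge 1$, and by \eqref{metrica}
\[
d(x,y)=\sum_{k\ge 1}\frac{|x_{N+k}-y_{N+k}|}{2^{N+k}}\le\frac{4}{2^{N}}\sum_{k\ge 1}\frac{k}{2^{k}}=\frac{8}{2^{N}},
\]
so every cylinder fixing $N+1$ consecutive coordinates has diameter at most $2^{3-N}$.

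Next I would decompose $\mathcal{R}=\bigcup_{a\in V}\mathcal{R}_{a}$ with $\mathcal{R}_{a}:=\{\gamma\in\mathcal{R}:\gamma_{0}=a\}$; since $V$ is countable it is enough to bound each $\dim_{H}\mathcal{R}_{a}$. If $\gamma\in\mathcal{R}_{a}$ and $\sigma^{n_{k}}(\gamma)\to\gamma$ with $n_{k}\uparrow\infty$, evaluating the metric at the $0$-th coordinate gives $|\gamma_{n_{k}}-a|\to 0$, hence $\gamma_{n_{k}}=a$ for all large $k$, so the symbol $a$ occurs in $\gamma$ at infinitely many positions $0=m_{0}<m_{1}<m_{2}<\cdots$. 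Thus every $\gamma\in\mathcal{R}_{a}$ is an infinite concatenation of first-return loops at $a$, and for each $j$ the prefix $\gamma_{0}\cdots\gamma_{m_{j}}$ is an admissible path from $a$ to $a$ of combinatorial length $m_{j}$. Since any path from $a$ to $a$ of length $N$ decomposes uniquely into first-return loops, the number of words arising as such a prefix $\gamma_{0}\cdots\gamma_{N}$ is at most $p^{\mathcal{G}}_{aa}(N)$. Fixing $M\in\N$ and, for $\gamma\in\mathcal{R}_{a}$, setting $N(\gamma):=m_{j}$ with $j=\min\{i\ge 1:m_{i}\ge M\}$, we get $N(\gamma)\ge M$ and $\gamma$ lies in the cylinder $[\gamma_{0}\cdots\gamma_{N(\gamma)}]_{0}$. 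Writing $[p]$ for the cylinder $[u_{0}\cdots u_{N}]_{0}$ determined by a path $p=(u_{0},\dots,u_{N})$, this gives
\[
\mathcal{R}_{a}\ \subseteq\ \bigcup_{N\ge M}\ \bigcup_{p\,:\,a\to a,\ |p|=N}[p],
\]
a cover whose mesh is at most $2^{3-M}\to 0$.

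Now for $s>h/\log 2$ pick $\varepsilon>0$ with $s>(h+\varepsilon)/\log 2$. Combining the two preliminary estimates,
\[
\sum_{N\ge M}\ \sum_{p\,:\,a\to a,\ |p|=N}\bigl(\operatorname{diam}[p]\bigr)^{s}\ \le\ 8^{s}\sum_{N\ge M}p^{\mathcal{G}}_{aa}(N)\,2^{-Ns}\ \le\ 8^{s}C_{\varepsilon}\sum_{N\ge M}\bigl(e^{h+\varepsilon}2^{-s}\bigr)^{N},
\]
and $e^{h+\varepsilon}2^{-s}<1$, so the right-hand side is the tail of a convergent geometric series and tends to $0$ as $M\to\infty$. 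Hence $\mathcal{H}^{s}(\mathcal{R}_{a})=0$, so $\dim_{H}\mathcal{R}_{a}\le s$ for every $s>h/\log 2$, i.e. $\dim_{H}\mathcal{R}_{a}\le h/\log 2$; taking the supremum over the countably many $a\in V$ gives $\dim_{H}\mathcal{R}\le h/\log 2$.

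The numerical part of the argument is routine once the structure is in place; the genuinely delicate step is the combinatorial observation that a recurrent point is precisely an infinite string of first-return excursions at its own initial symbol, which is what lets the Gurevich description of $h_{G}$ as the exponential growth rate of $p^{\mathcal{G}}_{aa}(n)$ control the covering numbers. One must also be careful that the metric \eqref{metrica} weights each coordinate by the actual values of the symbols; the diameter bound in \emph{(ii)} is what neutralizes this, and it is available here exactly because of the adjacency constraint $|s_{j}-s_{j+1}|\le 2$ built into the shift $\Theta_{\infty}$.
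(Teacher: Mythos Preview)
Your argument is correct, but it follows a genuinely different route from the paper's. The paper's proof of this theorem is two lines: it invokes invariance under topological conjugation (Theorem~9 of \cite{andre1}) and then cites Theorem~3.1 of Iommi \cite{iom} as a black box. You instead give a self-contained covering argument tailored to the specific shift $\mathcal{A}_{\mathcal{G}^{Z_{\infty}}}^{\mathbb{N}}$.

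The key difference is how the metric \eqref{metrica} is handled. That metric weights each coordinate by the actual integer values of the symbols, not merely by whether they differ, so cylinders need not have small diameter a priori. You neutralize this using the bounded-increment property $|s_{j+1}-s_{j}|\le 2$ intrinsic to $\Theta_{\infty}$, obtaining $\operatorname{diam}[a_{0}\cdots a_{N}]_{0}\le 8\cdot 2^{-N}$; this is the step the paper outsources to the cited references, and it is worth noting that Iommi's result is stated for the standard symbolic metric $2^{-\min\{i:x_{i}\neq y_{i}\}}$, so your direct diameter estimate in fact makes explicit a compatibility that the paper's citation leaves implicit. After that, your decomposition $\mathcal{R}=\bigcup_{a}\mathcal{R}_{a}$, the observation that recurrence forces infinitely many returns to the initial symbol (via the $0$-th term of the metric), and the covering by loop-cylinders controlled by $p^{\mathcal{G}}_{aa}(N)\le C_{\varepsilon}e^{(h+\varepsilon)N}$ constitute a clean and elementary proof. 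The paper's approach is shorter on the page but opaque; yours exposes exactly where the bound $h_{G}/\log 2$ comes from.
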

\begin{proof}
Since $Z_{\infty}$ is invariant by topological conjugation, the proof follows directly from Theorem $9$ \cite{andre1} and Theorem $3.1$ \cite{iom}.
\end{proof}
 
\begin{theorem}
\label{teo-dim-haus-2}
Let $\left( \mathcal{A}_{\mathcal{G}^{Z_{\infty}}}^{\mathbb{N}}, \sigma_{\mathcal{G}^{Z_{\infty}}}\right) $ be a topological Markov shift, topologically transitive such that $h_{\infty} < \infty$. Then $$dim_{H} \left( E \cap \mathcal{R} \right) \leq \dfrac{h_{\infty}}{\log(2)}, $$ where $dim_H$ denotes the Hausdorff dimension with respect to the metric \eqref{metrica}.
\end{theorem}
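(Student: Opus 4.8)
The plan is to reduce the statement to the already-established combinatorial bound for the Hausdorff dimension of the escaping-on-average recurrent set in an abstract countable Markov shift, exactly as was done in the proof of Theorem \ref{teo-dim-haus-1}. First I would invoke the conjugacy machinery of Section \ref{conjugacy}: by Proposition $9$ of \cite{andre1} the map $\overline{s}$ is a homeomorphism onto its image intertwining $\overline{T}_1$ with $\sigma_{\mathcal{G}^{Z_{\infty}}}$, so the dynamical picture on $\overline{\Omega}_{\infty}$ is literally a copy of the topological Markov shift $\left(\mathcal{A}_{\mathcal{G}^{Z_{\infty}}}^{\mathbb{N}},\sigma_{\mathcal{G}^{Z_{\infty}}}\right)$. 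Hence $E\cap\mathcal{R}$, being defined purely in terms of the shift dynamics (recurrence and the Birkhoff averages of indicator functions of cylinders), corresponds under $\overline{s}$ to a set with the analogous description, and it suffices to prove the estimate on the symbolic side.

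Next I would set up the covering argument that yields the bound $h_{\infty}/\log 2$. The key point is that a point $\gamma\in E\cap\mathcal{R}$ spends, asymptotically, a vanishing proportion of its time in any fixed finite subgraph; combining this with recurrence, for every $q$ and every large $\mathcal{M}$ the first $n+2$ coordinates of $\gamma$ lie among the cylinders counted by $z_n(\mathcal{M},q)$ for infinitely many $n$. Thus $E\cap\mathcal{R}$ is contained, for each fixed $s>0$, in a limsup set $\bigcap_{N}\bigcup_{n\geq N}\{\text{admissible }[a_0,\dots,a_{n+1}]\text{ with }a_0,a_{n+1}\leq q,\ \#\{i:a_i\leq q\}\leq (n+2)/\mathcal{M}\}$. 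Using the metric \eqref{metrica}, a cylinder $[a_0,\dots,a_{n+1}]$ has diameter comparable to $2^{-(n+1)}$ (here one must be slightly careful: coordinates are the arc labels $I_j\in\mathbb{N}$, so two sequences agreeing on the first $n+2$ coordinates differ by at most $\sum_{i\geq n+2}2^{-i}|a_i-b_i|$, which is only small if the tails are controlled — this is handled by restricting to a fixed finite window, or by observing that on the recurrent/escaping set the relevant estimates only require the diameter to be $\leq C\cdot 2^{-(n+1)}$ up to a bounded factor absorbed in the Hausdorff computation). Summing the $t$-th powers of these diameters over the $z_n(\mathcal{M},q)$ cylinders gives a bound of order $z_n(\mathcal{M},q)\,2^{-(n+1)t}$, and this tends to $0$ along a subsequence whenever $t\log 2 > h_{\infty}(\mathcal{M},q)$. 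Letting $\mathcal{M}\to\infty$, then $q\to\infty$, and optimizing over $t$ yields $\dim_H(E\cap\mathcal{R})\leq h_{\infty}/\log 2$.

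In fact the cleanest route is to not redo this estimate by hand at all: Theorem $3.1$ of \cite{iom} (the same reference used in Theorem \ref{teo-dim-haus-1}) establishes precisely the inequality $\dim_H(E\cap\mathcal{R})\leq h_{\infty}/\log 2$ for an arbitrary topologically transitive countable Markov shift with $h_{\infty}<\infty$, with respect to the metric \eqref{metrica}. So after transporting the problem to the symbolic side via $\overline{s}$, the proof is a direct citation, entirely parallel to the proof of Theorem \ref{teo-dim-haus-1}, with $h_{G}$ replaced by $h_{\infty}$ and $\mathcal{R}$ replaced by $E\cap\mathcal{R}$; invariance of all the quantities under topological conjugacy is what makes the transport legitimate.

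The main obstacle I anticipate is the bookkeeping around diameters of cylinders in the metric \eqref{metrica}: because the alphabet is $\mathbb{N}$ with its discrete topology but the metric uses $|a_i-b_i|$ rather than $\mathds{1}_{a_i\neq b_i}$, cylinder diameters are not exactly $2^{-(n+1)}$ and one needs either to pass to a fixed finite $q$-dependent window (bounding the relevant tail contribution) or to check that \cite{iom} already accommodates this form of the metric — which, since Theorem \ref{teo-dim-haus-1} cites it in the same way, it does. Granting that, the remaining steps are routine.
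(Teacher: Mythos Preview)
Your overall architecture is exactly the paper's: transport the problem to the symbolic side via the conjugacy $\overline{s}$ (Proposition~9 of \cite{andre1}) and then invoke an external dimension bound for countable Markov shifts. The only real discrepancy is the citation you land on. Theorem~3.1 of \cite{iom} is the multifractal/entropy result that feeds into Theorem~\ref{teo-dim-haus-1}; it does \emph{not} say anything about the escaping-on-average set $E$ or about $h_{\infty}$. The bound $\dim_H(E\cap\mathcal{R})\leq h_{\infty}/\log 2$ for a topologically transitive countable Markov shift with $h_{\infty}<\infty$ is Theorem~8.9 of \cite{iommi} (Iommi--Todd--Velozo), and that is precisely what the paper cites. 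So your ``cleanest route'' paragraph is correct in spirit but points to the wrong theorem in the wrong paper; once you swap in Theorem~8.9 of \cite{iommi}, your proof and the paper's coincide.

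Your hand-rolled covering sketch is a reasonable outline of how \cite{iommi} actually proceeds, and you correctly flag the one delicate point: in the metric \eqref{metrica} the term $|a_i-b_i|$ is unbounded, so a cylinder of length $n+2$ does \emph{not} have diameter $\asymp 2^{-(n+1)}$ without further control. The fix you gesture at---restricting to the finite window $\{a\leq q\}$ so that the tail contribution is bounded---is exactly what makes the argument go through, but it needs to be said precisely rather than waved away; this is another reason the paper simply cites \cite{iommi} rather than reproving it.
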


\begin{proof}
Since $Z_{\infty}$ is invariant by topological conjugation, the proof follows directly from Proposition $9$ \cite{andre1} and Theorem $8.9$ \cite{iommi}.
\end{proof}

\begin{corollary}
	\label{cor-haus}
Let $\left( \mathcal{A}_{\mathcal{G}^{Z_{\infty}}}^{\mathbb{N}}, \sigma_{\mathcal{G}^{Z_{\infty}}}\right) $ be a topological Markov shift, topologically transitive strongly positive recurrent with finite Gurevich entropy, then $$dim_{H} \left( E \cap \mathcal{R} \right)  < dim_{H} \left(  \mathcal{R} \right).$$
\end{corollary}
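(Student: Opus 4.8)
The plan is to combine Theorem \ref{teo-dim-haus-1} and Theorem \ref{teo-dim-haus-2} with the defining inequality of strong positive recurrence. By hypothesis $\left( \mathcal{A}_{\mathcal{G}^{Z_{\infty}}}^{\mathbb{N}}, \sigma_{\mathcal{G}^{Z_{\infty}}}\right)$ is topologically transitive with finite Gurevich entropy, so Theorem \ref{teo-dim-haus-1} applies and gives
\begin{equation*}
dim_{H}\left( \mathcal{R}\right) \leq \dfrac{h_{G}\left( \mathcal{A}_{\mathcal{G}^{Z_{\infty}}}^{\mathbb{N}},\sigma_{\mathcal{G}^{Z_{\infty}}}\right)}{\log(2)}.
\end{equation*}
Since the shift is strongly positive recurrent, by the very definition of this property we have $D_{\infty} < h_{G}\left( \mathcal{A}_{\mathcal{G}^{Z_{\infty}}}^{\mathbb{N}}\right)$, and $h_\infty$ is finite; in fact by the characterization recalled just before Definition \ref{def-entrop-topo-inf} (the work of Iommi and collaborators relating $h_{\infty}$ to strong positive recurrence), strong positive recurrence is equivalent to $h_{\infty} < h_{G}$. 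So Theorem \ref{teo-dim-haus-2} applies and yields
\begin{equation*}
dim_{H}\left( E \cap \mathcal{R}\right) \leq \dfrac{h_{\infty}}{\log(2)}.
\end{equation*}

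The remaining step is to chain the two bounds through the strict inequality $h_{\infty} < h_{G}$. Dividing this strict inequality by $\log(2) > 0$ preserves it, so
\begin{equation*}
dim_{H}\left( E \cap \mathcal{R}\right) \leq \dfrac{h_{\infty}}{\log(2)} < \dfrac{h_{G}\left( \mathcal{A}_{\mathcal{G}^{Z_{\infty}}}^{\mathbb{N}},\sigma_{\mathcal{G}^{Z_{\infty}}}\right)}{\log(2)},
\end{equation*}
and the right-hand side is an upper bound for $dim_{H}\left( \mathcal{R}\right)$ only, not a lower one, so one more observation is needed to conclude $dim_{H}(E \cap \mathcal{R}) < dim_{H}(\mathcal{R})$: we need $dim_{H}(\mathcal{R})$ to actually be comparable to $h_G/\log 2$ from below, or at least to exceed $h_\infty/\log 2$.

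Thus the main obstacle is precisely this last point: Theorem \ref{teo-dim-haus-1} only gives an \emph{upper} bound for $dim_{H}(\mathcal{R})$, so the naive chaining does not immediately produce the strict separation. To close the gap I would invoke the lower bound for the Hausdorff dimension of the recurrent set that accompanies the upper bound in the Iommi--type results (Theorem $3.1$ \cite{iom} and the companion statements in \cite{iommi}): for a topologically transitive countable Markov shift the recurrent set carries full dimension, i.e. $dim_{H}(\mathcal{R}) = h_{G}/\log(2)$, since the whole shift is supported on recurrent points up to a set of smaller dimension, and the variational/Bowen-type formula for $dim_H$ in terms of Gurevich entropy is attained. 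With the equality $dim_{H}(\mathcal{R}) = h_{G}\left( \mathcal{A}_{\mathcal{G}^{Z_{\infty}}}^{\mathbb{N}},\sigma_{\mathcal{G}^{Z_{\infty}}}\right)/\log(2)$ in hand, the chain of inequalities above becomes
\begin{equation*}
dim_{H}\left( E \cap \mathcal{R}\right) \leq \dfrac{h_{\infty}}{\log(2)} < \dfrac{h_{G}\left( \mathcal{A}_{\mathcal{G}^{Z_{\infty}}}^{\mathbb{N}},\sigma_{\mathcal{G}^{Z_{\infty}}}\right)}{\log(2)} = dim_{H}\left( \mathcal{R}\right),
\end{equation*}
which is exactly the desired conclusion. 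I would therefore structure the proof as: first cite Theorem \ref{teo-dim-haus-2} for the $E\cap\mathcal{R}$ bound; then record that strong positive recurrence gives $h_{\infty}<h_{G}$ (via the equivalence from \cite{iommi, gurevizar, ruet}); then invoke the full-dimension statement for $\mathcal{R}$ to identify $dim_{H}(\mathcal{R})$ with $h_{G}/\log(2)$; and finally assemble the strict inequality, noting division by $\log 2$ is monotone.
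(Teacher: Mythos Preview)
Your approach is exactly the paper's: invoke strong positive recurrence to get $h_{\infty} < h_G$ (the paper cites Proposition~2.20 of \cite{iommi} for this equivalence), divide by $\log 2$, and then feed in Theorems~\ref{teo-dim-haus-1} and~\ref{teo-dim-haus-2}. You have in fact been more careful than the paper. The gap you flag --- that Theorem~\ref{teo-dim-haus-1} only bounds $\dim_H(\mathcal{R})$ from \emph{above}, so the chain $\dim_H(E\cap\mathcal{R}) \le h_\infty/\log 2 < h_G/\log 2$ together with $\dim_H(\mathcal{R}) \le h_G/\log 2$ does not by itself force strict separation --- is real, and the paper does not address it: its displayed conclusion is actually written as $\dim_H(E\cap\mathcal{R}) \le \dim_H(\mathcal{R})$, which is trivially true by monotonicity since $E\cap\mathcal{R}\subseteq\mathcal{R}$ and is weaker than the stated corollary. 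Your proposed repair, namely upgrading Theorem~\ref{teo-dim-haus-1} to the equality $\dim_H(\mathcal{R}) = h_G/\log 2$ via the full Iommi result, is the natural way to close the argument and goes beyond what the paper's proof supplies.
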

\begin{proof}
Since, $\left( \mathcal{A}_{\mathcal{G}^{Z_{\infty}}}^{\mathbb{N}}, \sigma_{\mathcal{G}^{Z_{\infty}}}\right) $ be a topological Markov shift, topologically transitive strongly positive recurrent with finite Gurevich entropy, It follows from Proposition $20$ \cite{iommi} that $$h_{\infty} < h_{G}\left( \mathcal{A}_{\mathcal{G}^{Z_{\infty}}}^{\mathbb{N}},\sigma_{\mathcal{G}^{Z_{\infty}}}\right) < \infty \Rightarrow \dfrac{h_{\infty}}{\log (2)} < \dfrac{ h_{G}\left( \mathcal{A}_{\mathcal{G}^{Z_{\infty}}}^{\mathbb{N}},\sigma_{\mathcal{G}^{Z_{\infty}}}\right)}{\log (2)} < \infty .$$  Therefore, by Theorem \ref{teo-dim-haus-1} and Theorem \ref{teo-dim-haus-2}, we have $$dim_{H} \left( E \cap \mathcal{R} \right) \leq dim_{H} \left( \mathcal{R} \right).$$
\end{proof}

\begin{remark}
In Theorem \ref{teo-dim-haus-1}, Theorem \ref{teo-dim-haus-2} and the Corollary \ref{cor-haus} we use the PSVF $Z_{\infty}$, but the Theorem \ref{teo-princ} guarantees that the result also holds for any PSVF $Z$ that presents a $\infty$-homoclinic loop.
\end{remark}


\section{Gurevich entropy of PVFS with cofinite topology.}
\label{cofinity-topology}
So far in our approach to the symbolic dynamics of infinite alphabets, we have adopted the discrete topology, which is always meterizable, which allowed us to carry out an analysis considering the \eqref{metrica} metric. As we know if a set $A$ is finite, then the cofinite topology is the discrete topology. However, if $A$ is infinite, then the cofinite topology is not metrizizable. Therefore, an approach from the point of view of cofinite topology becomes interesting in the context of PSVFS.   For the case of topological Markov shifts induced by countably infinite graphs, \cite{reza} has made an approach that yields the same entropy as Gurevich's approach, providing formulas for the entropy of countable topological Markov shifts in terms of the spectral radius in $l^2$.

Below we show that in this context it is still possible to obtain a topological conjugation between the space of Markov shifts and the set of all global trajectories of the PSVF $Z_{\infty}$ and we can thus calculate the Gurevich entropy when we consider the cofinite topology.

Before proceeding, let us consider the following: Let $\mathcal{G} = G(V, E)$ be a finite or countable directed graph in which every vertex has at least one outgoing and one incoming edge.
\begin{definition}
The \textbf{two-sided topological Markov shift} associated to $\mathcal{G}$ is the
set $$\mathcal {A}_{\mathcal{G}^{Z_{\infty}}}^{\mathbb{Z}} := \left\lbrace x=(x_j)_{j \in \Z} \in V^{\Z};  x_i \to x_{i+1}\;\; \mbox{for all} i \in \Z\right\rbrace, $$ together with the metric $d(x, y) :=
2^{- \min\{\mid m \mid,\, x_m \neq y_m\}}$ and the action of the left-shift map $\sigma_{\mathcal{G}}(x_j) = b_j$, where $b_j = x_{j+1}$.
\end{definition}

The conditions for the compactness, topological transitivity, and topological
mixing of $\mathcal {A}_{\mathcal{G}^{Z_{\infty}}}^{\mathbb{Z}}$ are the same as for $\mathcal {A}_{\mathcal{G}^{Z_{\infty}}}^{\mathbb{N}}$. Local compactness is different: a two-sided topological Markov shift is locally compact iff for every a, $\#\{(u, v) : u \to a \to v\} < \infty$. Cylinders
are also slightly more complicated because of the need to keep track of the left-most
coordinate of the constraint. We will use the notation 
$$[a_0, \cdots , a_{n-1}]^{m} := \{ x \in \mathcal {A}_{\mathcal{G}^{Z_{\infty}}}^{\mathbb{Z}} : x_{m+i} = a_i, \;i = 0, \cdots, n - 1\}.$$
\begin{proposition}
The function $\overline{T}_1: \overline{\Omega}_{\infty} \to \overline{\Omega}_{\infty}$, where $\overline{\Omega}_{\infty}$ is equipped with the cofinite topology and is a homeomorphism.
\end{proposition}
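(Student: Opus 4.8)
The plan is to show that $\overline{T}_1$ is a continuous bijection with continuous inverse when $\overline{\Omega}_{\infty}$ carries the cofinite topology, by transporting the problem through the itinerary map $\overline{s}$ to the two-sided shift space. Recall from Section \ref{conjugacy} that $\overline{s} : \overline{\Omega}_{\infty} \to \Theta_{\infty} \subseteq \mathcal{A}_{\mathcal{G}^{Z_{\infty}}}^{\mathbb{N}}$ is a bijection onto its image satisfying the intertwining relation $\overline{T}_1 \circ \overline{s}^{-1} = \overline{s}^{-1} \circ \sigma_{\mathcal{G}^{Z_{\infty}}}$, and that $\sigma_{\mathcal{G}^{Z_{\infty}}}$ is a bijection of the shift space (being conjugate to the two-sided shift via Theorem $2.8$ of \cite{sarig}). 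Since $\overline{s}$ is already a bijection and the shift is a bijection, $\overline{T}_1 = \overline{s}^{-1} \circ \sigma_{\mathcal{G}^{Z_{\infty}}} \circ \overline{s}$ is automatically a bijection of $\overline{\Omega}_{\infty}$; the content is purely the topological part.

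First I would make precise what the cofinite topology on $\overline{\Omega}_{\infty}$ is: the closed sets are $\overline{\Omega}_{\infty}$ itself together with all finite subsets. The key observation is that for \emph{any} set, every bijection of that set to itself is a homeomorphism for the cofinite topology, because a bijection maps finite sets to finite sets bijectively, hence maps closed sets to closed sets and preimages of closed sets to closed sets. So the cleanest route is: (i) verify $\overline{\Omega}_{\infty}$ is an infinite set, so that the cofinite topology is genuinely the relevant one (and not accidentally discrete); (ii) invoke the fact that $\overline{T}_1$ is a bijection, established via $\overline{s}$ and $\sigma_{\mathcal{G}^{Z_{\infty}}}$ as above; (iii) conclude that both $\overline{T}_1$ and $\overline{T}_1^{-1}$ are continuous because the preimage under a bijection of a finite set is finite and the preimage of the whole space is the whole space.

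More explicitly, I would argue: let $C \subseteq \overline{\Omega}_{\infty}$ be closed in the cofinite topology. If $C = \overline{\Omega}_{\infty}$ then $\overline{T}_1^{-1}(C) = \overline{\Omega}_{\infty}$ is closed. If $C$ is finite, then since $\overline{T}_1$ is injective, $\overline{T}_1^{-1}(C)$ has cardinality at most $\#C < \infty$, hence is finite, hence closed. Thus $\overline{T}_1$ is continuous. Applying the identical argument to $\overline{T}_1^{-1}$ (which exists and is a bijection since $\overline{T}_1$ is) shows $\overline{T}_1^{-1}$ is continuous, so $\overline{T}_1$ is a homeomorphism. The step establishing bijectivity is where the real dynamics enters — it rests on Proposition \ref{obs representante} (existence of canonical representatives with $\gamma^{\ast}(0)$ a two-fold) together with Proposition $9$ of \cite{andre1} giving that $\overline{s}$ is injective, and on surjectivity of $\sigma_{\mathcal{G}^{Z_{\infty}}}$ on $\Theta_{\infty}$, which follows because every admissible one-sided sequence extends backwards (each arc $I_n$ has an incoming arc, since $p_j$ is reached from an adjacent compartment).

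The main obstacle I anticipate is checking surjectivity of $\overline{T}_1$ carefully: given $\overline{\gamma} \in \overline{\Omega}_{\infty}$ one must produce $\overline{\eta}$ with $\overline{T}_1(\overline{\eta}) = \overline{\gamma}$, i.e. an equivalence class of global trajectories whose time-one shift has itinerary $s(\gamma)$. Concretely this means prepending one symbol: if $s(\gamma) = (s_0, s_1, \dots)$ we need some admissible $s_{-1}$ with $0 < |s_{-1} - s_0| \le 2$ realizing the graph transition $s_{-1} \to s_0$, and then invoking Proposition \ref{obs representante} / the negative-time extension in Definition \ref{def1} to realize this by an actual global trajectory of $Z_{\infty}$; the geometry of $Z_{\infty}$ (each two-fold $p_j$ is a visible-visible two-fold reached from both adjacent arcs) guarantees such an incoming arc always exists. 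Once bijectivity is in hand, the topological conclusion for the cofinite topology is essentially formal, and I would keep that part brief.
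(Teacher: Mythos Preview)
Your argument is correct, and in one respect cleaner than the paper's: you isolate the general fact that \emph{any} bijection of a set is automatically a homeomorphism for the cofinite topology (preimages of finite sets under a bijection are finite), which makes the continuity part a one-liner once bijectivity is known. The paper instead checks continuity by hand on complements of singletons, arriving at the same conclusion less transparently.

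Where you diverge from the paper is in how you establish bijectivity. The paper, working in the two-sided setting of Section~\ref{cofinity-topology}, simply writes down the explicit inverse $\overline{T}_1^{-1}(\gamma)(\cdot) = \gamma(\cdot - 1)$: shifting time forward by one is undone by shifting back by one, and this is well defined because global (not merely positive) trajectories are in play here. You instead route bijectivity through the itinerary map $\overline{s}$, the conjugacy with the shift, and a backward-extension argument (prepending an admissible symbol). That works, but it is more machinery than needed and imports results (Proposition~9 of \cite{andre1}, Theorem~2.8 of \cite{sarig}) phrased for the one-sided setup of Section~\ref{conjugacy}, which sits a little awkwardly against the two-sided framework introduced at the start of Section~\ref{cofinity-topology}. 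The paper's direct inverse avoids this detour entirely; your version has the compensating virtue of making the ``every vertex has an incoming edge'' hypothesis explicit, which is exactly what guarantees the backward shift lands in the space.
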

\begin{proof}
As $\overline{\Omega}_{\infty}$, is endowed with the cofinite topology, it follows that $\overline{\Omega}_{\infty}$, for any $\gamma, \widetilde{\gamma} \in 
\overline{\Omega}_{\infty}$ distinct, there exists an open $U$ such that $ \gamma \in U$ and $\widetilde{\gamma} \notin U.$ But a consequence of this fact is that for every ${\gamma} \in \overline{\Omega}_{\infty} , \{{\gamma} \}$ is closed, so $\mathcal{U} = \overline{\Omega}_{\infty } \setminus \{{\gamma} \}$ is open. Let $\widehat{\gamma} \in {U}$, note that $\overline{T}_1^{-1}\left( \widetilde{\gamma}\right)(\cdot ) = \widetilde {\gamma}(\cdot -1) \in {U}$ which is open. Therefore $\overline{T}_1^{-1}({U})$ belongs to the cofinite topology and $\overline{T}_1$ is continuous. In an entirely analogous way, it is shown that its inverse $\overline{T}_1^{-1}$ is also continuous.
\end{proof}	
\begin{proposition}
The map $\overline{s}:\overline{\Omega}_{\infty} \to \overline{s}(\overline{\Omega}_{\infty}) $ is a homeomorphism onto its image.
\end{proposition}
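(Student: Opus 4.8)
The plan is to show that $\overline{s}$ is a continuous bijection onto its image with continuous inverse, where both the domain $\overline{\Omega}_{\infty}$ and the codomain $\overline{s}(\overline{\Omega}_{\infty}) \subseteq \Theta_{\infty}$ carry the cofinite topology. The key observation is that $\overline{s}$ is already known to be injective (this is recorded right before Definition \ref{def10}, and follows from the very definition of the equivalence relation $\sim$ in Definition \ref{def8}: two trajectories are identified precisely when they have the same itinerary). So the only real content is that both $\overline{s}$ and $\overline{s}^{-1}$ are continuous for the cofinite topology.

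First I would recall the general principle: if $X, Y$ are sets equipped with their cofinite topologies and $g : X \to Y$ is \emph{injective}, then $g$ is automatically continuous. Indeed, a nonempty open set in $Y$ has the form $Y \setminus F$ with $F$ finite; its preimage is $X \setminus g^{-1}(F)$, and $g^{-1}(F)$ is finite because $g$ is injective (it has at most $\#F$ elements). Hence $g^{-1}(Y\setminus F)$ is cofinite in $X$, so open. Applying this to $g = \overline{s}$ (injective by the previous paragraph) gives continuity of $\overline{s}$ immediately. For the inverse, note that $\overline{s}^{-1} : \overline{s}(\overline{\Omega}_{\infty}) \to \overline{\Omega}_{\infty}$ is also injective (it is a bijection onto its image), so the same principle shows $\overline{s}^{-1}$ is continuous. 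Therefore $\overline{s}$ is a homeomorphism onto its image.

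One subtlety I would address explicitly: the subspace topology that $\overline{s}(\overline{\Omega}_{\infty})$ inherits from $\Theta_{\infty}$ (with $\Theta_{\infty}$ in the cofinite topology) coincides with the cofinite topology of $\overline{s}(\overline{\Omega}_{\infty})$ itself. This is a standard fact — the subspace topology of a cofinite space is the cofinite topology of the subspace, since the trace of a cofinite set is cofinite in the subspace and conversely — and it is what lets us run the argument with $Y = \overline{s}(\overline{\Omega}_{\infty})$ directly. I would state this as a one-line lemma or simply invoke it.

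I do not expect a serious obstacle here; the statement is essentially a formal consequence of injectivity together with the combinatorial elementary-ness of the cofinite topology. If anything, the only point requiring a little care is making sure the itinerary map $\overline{s}$ is genuinely well-defined and injective on the quotient $\overline{\Omega}_{\infty}$ — but that has already been established in Section \ref{conjugacy} (the induced map $\overline{s}$ does not depend on the representative and is one-to-one), so I would simply cite that. The proof is then three short lines: $\overline{s}$ injective $\Rightarrow$ $\overline{s}$ continuous for cofinite topologies; $\overline{s}^{-1}$ injective $\Rightarrow$ $\overline{s}^{-1}$ continuous; conclude homeomorphism onto image.
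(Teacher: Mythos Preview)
Your proposal is correct and is in fact the clean way to argue this: injectivity of $\overline{s}$ (established just before Definition~\ref{def10}) forces preimages of finite sets to be finite, hence closed in the cofinite topology, and the same applies to $\overline{s}^{-1}$. The paper's proof is aiming at the same idea---checking that the preimage of a singleton is again a singleton---but its written version is muddled (it invokes $\sigma^{-1}$ and an identity $\overline{s}^{-1}(\sigma^{-1}(\{(x_j)\})) = \overline{s}^{-1}(\{\gamma\})$ that does not literally hold, apparently a slip for the direct observation that $\overline{s}^{-1}$ of a point is a point); your abstract formulation as ``injective $\Rightarrow$ continuous between cofinite spaces'' is both more transparent and more complete, and your explicit remark that the subspace topology on $\overline{s}(\overline{\Omega}_\infty)$ is again cofinite fills a gap the paper leaves implicit.
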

\begin{proof}
Let $\left\lbrace \left(x_j\right)_{j \in \mathbb{Z}}\right\rbrace $ be closed in $\overline{s}\left(\overline{\Omega}_{\infty} \right)$, then $\sigma^{-1}\left(\left\lbrace\left(x_j\right)_{j \in \mathbb{Z}}\right\rbrace\right)$ is closed in $ \overline{s}\left(\overline{\Omega}_{\infty}) \right)$, since $\sigma$ is continuous. Note that $$\overline{s}^{-1} \left(\sigma^{-1}\left(  \left\lbrace \left(x_j\right)_{j \in \mathbb{Z}}\right\rbrace\right)\right)   = \overline{s}^{-1}\left(\{ \gamma\}\right).$$ Therefore, $\overline{s}$ is continuous. Similarly, it is shown that its inverse $\overline{s}^{-1}$ is continuous.
\end{proof}	
\begin{proposition}
\label{prop}
The function $\overline{{s}}: \overline{\Omega}_{\infty} \to \overline{{s}}\left( \overline{\Omega}_{\infty}\right) $ is a conjugacy between $\overline{T}_1$  and $\sigma$, i.e., $\overline{{s}} \circ \overline{T}_1 = \sigma \circ \overline{{s}}.$
\end{proposition}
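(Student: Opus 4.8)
The plan is to unwind the definitions and compute directly: for an arbitrary class $\overline{\gamma} \in \overline{\Omega}_{\infty}$ (represented, by Proposition~\ref{obs representante}, by $\gamma^{\ast}$ with $\gamma^{\ast}(0) \in \{(p_j,0)\}_{j\in\N}$) I want to show $\overline{s}(\overline{T}_1(\overline{\gamma})) = \sigma(\overline{s}(\overline{\gamma}))$ as elements of $\overline{s}(\overline{\Omega}_{\infty}) \subseteq V^{\Z}$. Write $\overline{s}(\overline{\gamma}) = (s_j(\gamma))_{j}$. On the left, $\overline{T}_1(\overline{\gamma})$ is the class of $T_1(\gamma)(\cdot) = \gamma(\cdot+1)$; applying $\overline{s}$ to it and reading off the $j$-th coordinate gives, by Definition~\ref{def7}, the symbol determined by $(T_1\gamma)(j) = \gamma(j+1)$, namely $s_j(T_1\gamma) = s_{j+1}(\gamma)$. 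On the right, $\sigma$ shifts the sequence, so the $j$-th coordinate of $\sigma(\overline{s}(\overline{\gamma}))$ is $s_{j+1}(\gamma)$. Hence the two sides agree coordinate-wise, which is exactly the claimed identity $\overline{s}\circ\overline{T}_1 = \sigma\circ\overline{s}$.

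The one point that needs a little care — and which I expect to be the only real obstacle — is that Definition~\ref{def7} assigns the symbol $s_j(\gamma)$ by two cases, according to whether $\gamma(j)$ lies in the interior of an arc $I_n$ or at a two-fold $(p_l,0)$; in the latter case one uses $\gamma(j+\tfrac12) \in I_m$. So I must check that the shift relation $s_j(T_1\gamma) = s_{j+1}(\gamma)$ holds in \emph{both} cases and, crucially, that the case distinction itself is shift-compatible: $\gamma(j+1)$ is at a two-fold if and only if $(T_1\gamma)(j)$ is, and when it is, $(T_1\gamma)(j+\tfrac12) = \gamma(j+1+\tfrac12)$, so the auxiliary half-step evaluation also transports correctly. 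This is immediate from $T_1\gamma(t) = \gamma(t+1)$, but it is the step worth writing out explicitly. I would also recall that $\overline{T}_1$ and $\overline{s}$ are well-defined on classes (already noted in Section~\ref{conjugacy}: $\overline{T}_1(\overline{\gamma}) = \overline{T_1(\gamma)}$ and $\overline{s}$ is injective and representative-independent), so the computation on the chosen representative $\gamma^{\ast}$ legitimately computes the value on the class.

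Finally, I would remark that no topology is used in this proposition — it is a purely set-theoretic (combinatorial) identity of maps — so it holds verbatim whether $\overline{\Omega}_{\infty}$ carries the metric of Definition~\ref{def10} or the cofinite topology; together with the two preceding propositions (that $\overline{T}_1$ and $\overline{s}$ are homeomorphisms for the cofinite topology) this shows $\overline{s}$ is a topological conjugacy between $(\overline{\Omega}_{\infty}, \overline{T}_1)$ and $(\overline{s}(\overline{\Omega}_{\infty}), \sigma)$ in the cofinite setting, which is what Section~\ref{cofinity-topology} is after. The proof is thus one short display plus the case-check; there is no delicate estimate involved.
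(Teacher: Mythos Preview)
Your proposal is correct and follows essentially the same approach as the paper: both compute the $j$-th coordinate of $\overline{s}(\overline{T}_1(\gamma))$ via $(T_1\gamma)(j)=\gamma(j+1)$ and identify it with $s_{j+1}(\gamma)$, the $j$-th coordinate of $\sigma(\overline{s}(\gamma))$. You are in fact more careful than the paper in treating the two-fold case of Definition~\ref{def7}; the only thing the paper adds that you leave implicit is the remark that $\overline{s}(\overline{\Omega}_{\infty})$ is $\sigma$-invariant (hence a subshift), which as you note follows immediately from the identity $\sigma=\overline{s}\circ\overline{T}_1\circ\overline{s}^{-1}$.
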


\begin{proof}
Let $\gamma \in \overline{\Omega}_k, (a_j)_{j \in \mathbb{Z}} = \overline{{s}}(\gamma)$ and $(b_j)_{j \in \mathbb{Z}} = \overline{{s}}(\overline{T}_1(\gamma))$, then : 	
\begin{center}
\label{diagra}
$\xymatrix{
\overline{\Omega}_{\infty} \ar[d]_{\overline{{s}}} \ar[r]^{\overline{T}_1} & \overline{\Omega}_{\infty} \ar[d]^{\overline{{s}}} \\
\overline{{s}} (\overline{\Omega}_{\infty})  \ar[r]^{\sigma}& \overline{{s}} (\overline{\Omega}_{\infty})}$
	\end{center} 
$$b_j = r\; \textrm{if}\; \overline{T}_1(\gamma)(j) \in I_r \Rightarrow b_j = r \; \textrm{if}\; \gamma(j+1) \in I_r \Rightarrow	b_j = a_{j+1} \Rightarrow (b_j)_{j \in \mathbb{Z}} = \sigma\left( (a_j)_{j \in \mathbb{Z}}\right).$$
	
It remains to show that $\overline{{s}}\left( \overline{\Omega}_{\infty}\right)$ is a subshift. Since cofinite topology transforms each set into a separable compact space, by the first part of this proof, we have $\sigma = \overline{{s}} \circ \overline{T}_1\circ \overline{{s}}^{-1}$, which proves the invariant part. So, $\overline{{s}}$ is a conjugacy.
\end{proof}		
Now, we consider countably infinite graphs which we assume, without loss of generality, that their set of vertices is $\mathbb{N}$.

In the infinite-dimensional setting the choice of topology becomes important.
Let $l^2$ denote the Hilbert space of sequences $\{x_i\}_{i=1}^{\infty}$ such that $\sum_{i =1} \mid x_i \mid^2 < \infty$ equipped with the norm $\parallel x \parallel_{2} = \sqrt{\sum_{i =1} \mid x_i \mid^2}$. Then the adjacency operator $F : l^2 \to l^2$ is defined by	$$\left( Fx\right)_i  = \overset{\infty}{\underset{j = 1}{\sum}} F_{ij}x_j.$$
We suppose that $F$ is uniformly locally finite, i.e., there is a common upper bound for the number of successors of every vertex; therefore, the adjacency operator $F : l^2 \to l^2$ is bounded (\cite{mo} Theorem $3.2$).

The spectrum of a bounded operator $B : l^2 \to l^2$ is
$$Spec(B) = \left\lbrace \lambda \in \mathbb{C}; B-\lambda I\;\; \mbox{is not invertible}\right\rbrace, $$

and its spectral radius is the number $$\rho(B) = \underset{\lambda}{\sup}\left\lbrace \mid \lambda\mid \; \lambda \in Spec(B) \right\rbrace $$	
\begin{example}
Consider the subshift $\widetilde{\Theta} \subseteq \overline{{s}}\left( \overline{\Omega}_{\infty}\right) \subseteq \Theta_{\infty}$ defined by $$\widetilde{\Theta}:= \left\lbrace (x_j)_{j \in \mathbb{Z}} \in \overline{{s}}\left( \overline{\Omega}_{\infty}\right); \mid x_{j+1} - x_j\mid = 1 \;\; \forall j \in \mathbb{Z}\right\rbrace.$$
This subshift is generated by walks along an infinite graph, the infinite two-way path, whose spectral radius is $2$, (see \cite{reza}). Thus, there exists $\widetilde{\Omega} \subseteq \mathcal{A}_{\mathcal{G}^{Z_{\infty}}}^{\mathbb{Z}}$ for which we have $$h_G( \widetilde{\Omega}, \overline{T}_1) = h_G(\widetilde{\Theta}, \sigma_{\widetilde{\Theta}}) = \log (2).$$
\end{example}	

\section{Appendix}
\label{apen}

\subsection{Stocastic Process and Markov Chains}
\label{part}


A set of random variables $(\mathcal{X}_m)_{m \in M}$ indexed by a set $M$, defined in a probability space $(\Xi, \mathcal{F}, Prob )$ and taking values in a set $E$, called the state space, is a \textbf{stochastic process}. \textbf{Markov chains} are sequences of random variables  that possess the so-called Markov property: given one term in the chain (the present), the subsequent terms (the future) are conditionally independent of the previous terms (the past). In this work, we consider $ E= M = \mathbb{Z}$. Let us start with the formal definition. 

\begin{remark}
We can always associate a Markov chain with the shift space, where the random variable $\mathcal{X}_{m+1}$ of the sequence is equal to $\sigma\left( \mathcal{X}_m\right).$
\end{remark}

\begin{definition}
A stochastic process $(\mathcal{X}_m)_{m \in \mathbb{Z}}$  have the \textit{Markovian property} if
\begin{equation*}
Prob\left(\mathcal{X}_{m+1} = j \mid \mathcal{X}_0= i_0, \cdots, \mathcal{X}_{m-1} = i_{m-1}, \mathcal{X}_m = i \right) = Prob\left( \mathcal{X}_{m+1} = j \mid \mathcal{X}_m = i \right),
 \end{equation*}
for $m \in \mathbb{Z}$ and  $i, j, i_0, \cdots i_{m-1} \in \mathbb{Z}$.
\end{definition}

A sequence of random variables $( \mathcal{X}_m)_{m \in \mathbb{Z}}$ is a \textbf{Markov chain} if it has the Markovian property.   A Markov chain is defined by its stochastic matrix $W=(w(i,j)),$ where $w(i,j) =  Prob\left( \mathcal{X}_{m+1} = j \mid \mathcal{X}_m = i \right) \quad \forall \, i,j \in \mathbb{Z} \quad \textrm{and} \quad m \in \mathbb{Z},$  we can simply use matrix multiplication to get the desired result and $ w_{ij}^{(n)}$ will denote the element of row $i$ and column $j$ of $W^n, \, n$ positive integer.

\begin{definition}
We say that the states $i$ and $j$ communicate if $Prob(\mathcal{X}_m = i \mid \mathcal{X}_0=j)\quad \textrm{and} \quad Prob(\mathcal{X}_{\widetilde {m}} = j \mid \mathcal{X}_0=i)$ for some $m$ and $\widetilde{m}$. That is, from one of them we can eventually visit the other.
A Markov chain is said to be \textbf{irreducible} if all its states communicate.
\end{definition}

\begin{definition}
A state $i \in \mathbb{Z}$ is said to be \textbf{aperiodic} if it exists $n_0 \geq 1$ such that $w^{(n)}_{ii} >0$ for all $n \geq n_0$. A matrix $W$ is \textbf{aperiodic} if this property holds for all $i \in \mathbb{Z}$. Finally, a Markov chain is \textbf{aperiodic} if its stochastic matrix is aperiodic.
\end{definition}

\begin{definition}
	\label{tra}
The probability of first visit at state $j$ after $m$ steps, starting from state $i$,is:
$r_{ij}^{(m)}=Prob(\mathcal{X}_n=j, \mathcal{X}_1 \neq j, \mathcal{X}_2 \neq j, \cdots , \mathcal{X}_{m-1} \neq j  \mid \mathcal{X}_0=i).$
\end{definition}

The expected number of steps to arrive for the first time at state $j$ starting from $i$ is $h_{ij} = \underset{m > 0}{\sum} m \cdot r_{ij}^{(m)}.$
The probability of a visit (not necessarily for the first time) at state $j$, starting from state $i$, is: $f_{ii} = \underset{m > 0}{\sum} r_{ij}^{(m)}.$
If $f_{ij} < 1$ then there is a positive probability that the Markov chain never arrives at state $j$, so in this case $h_{ij}= \infty.$ A state $i$ for which  $f_{ii} < 1$ (i.e. the chain has positive probability of never visiting state $i$ again) is a \textbf{transient state }. If $f_{ii}=1$ then the state is  called \textbf{recurrent}. More so, if state $i$ is recurrent, but $h_{ii}= \infty$ is \textbf{recurring null}. If is recurrent and $h_{ii} \neq \infty$ is \textbf{positive recurrent}. Every state is either recurrent or transient. Assim, cada estado $j$ em uma cadeia de Markov contável é recorrente ou transitório, e é recorrente se e somente se um eventual retorno a $j$ (condicional em $\mathcal{X}_0=j$) ocorre com probabilidade $1.$

Now we will present a simple way to calculate the unique stationary distribution for a Markov chain on $\mathbb{Z}$. The concept of stationary distribution is almost identical to that introduced in the case of a finite state space. 

Let $(\mathcal{X}_m)_{m \in \mathbb{Z}}$ be a  irreducible aperiodic Markov chain with state space $\mathbb{Z}$ and stochastic matrix $W,$ and let $\pi_m$ be the distribution of $\mathcal{X}_m$: $\pi_m(i) = Prob(\mathcal{X}_m = i) \quad \textrm{for all} \;\; i \in \mathbb{Z}.$ If, for a given double sequence of transition probabilities $w(i,j)$, there is an initial distribution $\pi_j$ such that the distribution of all the terms of the chain is equal to the initial distribution, then $\pi_j$ is called a \textbf{stationary distribution} of the chain. Furthermore, we have that 
$$\pi_j = \displaystyle \sum _{i =1}^{\infty }\pi_{i}w(i,j). $$
\begin{proposition}
If a Markov chain with a countable state space is irreducible and positive recurrent, then it has a unique stationary distribution.
\end{proposition}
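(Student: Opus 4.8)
The statement is the classical fact that an irreducible, positive recurrent countable Markov chain admits a unique stationary distribution; the plan is to follow the standard potential-theoretic / renewal argument adapted to the countable state space. First I would fix a reference state, say $i^{\ast} \in \mathbb{Z}$, and define the candidate stationary vector by the expected number of visits to each state during an excursion from $i^{\ast}$: for $j \in \mathbb{Z}$ set
\begin{equation*}
\nu_j \defeq \mathbb{E}_{i^{\ast}}\!\left[ \sum_{n=0}^{\tau_{i^{\ast}}-1} \mathds{1}_{\{\mathcal{X}_n = j\}} \right],
\end{equation*}
where $\tau_{i^{\ast}}$ is the first return time to $i^{\ast}$. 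By irreducibility every $\nu_j$ is finite and strictly positive, and $\nu_{i^{\ast}} = 1$. The core computation is to verify the invariance relation $\nu_j = \sum_{i} \nu_i\, w(i,j)$; this is done by splitting the excursion at the last visit to $i$ before each visit to $j$ (or, more cleanly, by writing $\nu_j$ as a sum over $n \geq 1$ of $Prob_{i^{\ast}}(\mathcal{X}_n = j,\ \tau_{i^{\ast}} \geq n)$ and applying the Markov property one step back), which is the main technical step.

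Second, I would use positive recurrence: by definition $h_{i^{\ast} i^{\ast}} = \mathbb{E}_{i^{\ast}}[\tau_{i^{\ast}}] < \infty$, and since $\sum_j \nu_j = \mathbb{E}_{i^{\ast}}[\tau_{i^{\ast}}] = h_{i^{\ast}i^{\ast}}$, the measure $\nu$ has finite total mass. Normalizing, $\pi_j \defeq \nu_j / h_{i^{\ast}i^{\ast}}$ is a genuine probability distribution, and by linearity it still satisfies $\pi_j = \sum_i \pi_i w(i,j)$, so it is a stationary distribution and existence follows.

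Third, for uniqueness I would argue that any stationary probability distribution $\mu$ must coincide with $\pi$. The key point is that an irreducible chain possesses, up to a positive multiplicative constant, only one stationary measure; concretely, if $\mu$ is stationary and $\mu_{i^{\ast}} > 0$ (which holds because irreducibility forces $\mu$ to charge every state — a state with $\mu_j = 0$ would force $\mu_i = 0$ for all $i$ leading into $j$, hence all $i$, contradicting $\sum \mu_i = 1$), then a standard minimality/comparison argument (iterate $\mu = \mu W$, bound $\mu_j \geq \mu_{i^{\ast}}\nu_j$ term by term, and observe the total masses then force equality) shows $\mu_j = \mu_{i^{\ast}}\nu_j$ for all $j$. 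Summing over $j$ gives $1 = \mu_{i^{\ast}} h_{i^{\ast}i^{\ast}}$, hence $\mu_{i^{\ast}} = 1/h_{i^{\ast}i^{\ast}}$ and $\mu = \pi$.

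The main obstacle is the invariance identity $\nu = \nu W$ for the excursion measure: it requires a careful interchange of the infinite sum over $n$ with the conditioning, justified by Tonelli since all terms are nonnegative, together with a correct bookkeeping of the excursion endpoints so that the contribution of the return to $i^{\ast}$ is accounted for exactly once. Once that identity is in hand, positive recurrence is exactly the hypothesis that converts the invariant measure into an invariant probability, and uniqueness is a soft consequence of irreducibility; I would not expect either of those last two steps to present real difficulty.
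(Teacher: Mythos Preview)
Your plan is correct and follows the standard excursion/renewal argument for this classical result. The paper does not actually prove the proposition; it simply cites Gallager \cite{gal}, so your write-up is strictly more detailed than what the paper provides, and the approach you outline is essentially the one found in that reference.
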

\begin{proof}
See \cite{gal}
\end{proof}

In the case of a finite state space, irreducibility is sufficient to obtain the uniqueness of the stationary distribution. In the countable case, we need to add the positive recurrence requirement.
\begin{proposition}
If a Markov chain with a countable state space is irreducible, positive recurrent and aperiodic, then, irrespective of the initial distribution $\pi_j.$ $$\underset{m \to \infty}{\lim} \pi_m(j) = \pi_j,$$
for any $j$, where $\pi_j$ is the unique stationary distribution of the chain.
\end{proposition}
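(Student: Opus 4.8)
The plan is to prove this by a coupling argument, using the unique stationary distribution $\pi=(\pi_j)$ furnished by the previous Proposition. First I would introduce two chains with the common transition matrix $W$: the chain $(\mathcal{X}_m)_{m\geq 0}$ started from the given initial distribution $\pi_0$, and an independent chain $(\mathcal{Y}_m)_{m\geq 0}$ started from $\pi$, so that the law of $\mathcal{Y}_m$ equals $\pi$ for every $m$. Then I would pass to the product chain $(\mathcal{X}_m,\mathcal{Y}_m)_{m\geq 0}$ on $\mathbb{Z}\times\mathbb{Z}$, whose one-step transition probabilities are $\widetilde{w}\bigl((i,k),(j,l)\bigr)=w(i,j)\,w(k,l)$.

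The first substantial step is to show that this product chain is irreducible, and it is exactly here that aperiodicity is used. Since $W$ is irreducible and aperiodic, for each pair $i,j$ there is $N(i,j)$ with $w_{ij}^{(n)}>0$ for all $n\geq N(i,j)$; consequently $\widetilde{w}_{(i,k),(j,l)}^{(n)}=w_{ij}^{(n)}\,w_{kl}^{(n)}>0$ for every $n$ exceeding $\max\{N(i,j),N(k,l)\}$, so any two states of the product chain communicate. Next I would observe that $\widetilde{\pi}(i,k):=\pi_i\,\pi_k$ is a stationary probability distribution for the product chain; since an irreducible Markov chain on a countable state space possessing a stationary probability distribution is positive recurrent, the product chain is positive recurrent, hence recurrent.

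Define the coupling time $\tau:=\inf\{m\geq 0:\mathcal{X}_m=\mathcal{Y}_m\}$. Because the product chain is irreducible and recurrent, it almost surely visits the diagonal $\{(j,j):j\in\mathbb{Z}\}$, so $\tau<\infty$ almost surely. Using the strong Markov property at $\tau$, the process $\mathcal{X}'_m$ defined by $\mathcal{X}'_m=\mathcal{X}_m$ for $m<\tau$ and $\mathcal{X}'_m=\mathcal{Y}_m$ for $m\geq\tau$ has the same distribution as $(\mathcal{X}_m)$. Therefore, for every $j$,
$$\bigl|\pi_m(j)-\pi_j\bigr|=\bigl|Prob(\mathcal{X}'_m=j)-Prob(\mathcal{Y}_m=j)\bigr|\leq Prob(\mathcal{X}'_m\neq\mathcal{Y}_m)\leq Prob(\tau>m),$$
and the right-hand side tends to $0$ as $m\to\infty$ because $\tau<\infty$ almost surely, which gives $\lim_{m\to\infty}\pi_m(j)=\pi_j$.

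I expect the main obstacle to be precisely the irreducibility of the product chain: this is the step at which aperiodicity is indispensable (for a periodic irreducible chain the product chain can fail to be irreducible, and the convergence statement itself can fail), and it relies on the uniform-in-$n$ positivity $w_{ij}^{(n)}>0$ that aperiodicity supplies. The remaining ingredients — that a stationary probability measure forces positive recurrence for an irreducible chain, and the strong Markov property used to splice $(\mathcal{X}'_m)$ — are standard. Alternatively, one could replace the coupling by an application of the renewal theorem to the successive return times to a fixed state, but the coupling route is the cleanest here.
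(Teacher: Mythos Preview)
Your coupling argument is correct and is the standard Doeblin proof of this convergence theorem; each step --- the irreducibility of the product chain via aperiodicity, positive recurrence from the product stationary law $\pi\otimes\pi$, almost-sure finiteness of the coupling time, and the splicing via the strong Markov property --- is sound.

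The paper, however, does not supply a proof at all: its entire argument is the citation ``See \cite{gal}'' to Gallager's textbook. So there is no approach in the paper to compare against; you have in fact written out a full proof where the author chose to defer to the literature. Your coupling route is one of the standard textbook arguments (and may well be the one Gallager gives), while the renewal-theoretic alternative you mention at the end is the other common choice. Either would be appropriate here, and both are more than the paper itself provides.
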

\begin{proof}
	See \cite{gal}
\end{proof}
%


\begin{definition}
	
The \textbf{total variation distance} between two probability distributions $\pi$ and $\widetilde{\pi}$ on $\mathbb{Z}$ is defined as $
\| \pi - \widetilde{\pi} \|_{TV} = \underset{\mathcal{E}\subseteq \mathbb{Z}}{\max} \mid \pi(\mathcal{E}) - \widetilde{\pi}(\mathcal{E}) \mid.$
This definition is explicitly probabilistic: the distance between $\pi$ and $\widetilde{\pi}$ is the maximum difference between the probabilities assigned to a single event by the two distributions.
\end{definition}
	
\begin{definition}
	\label{trans}
A probability distribution $\pi$ is said to be \textbf{reversible} if, for all $i,j \in \mathbb{Z}$ gives $\pi(i)w(i,j) = \pi(j)w(j,i).$ When a Markov chain has a reversible probability distribution it is considered a reversible Markov chain.
\end{definition}

\begin{definition}
Let $\mathcal{X}$ be a  random variable, with values possible $x_1, x_2, \cdots $ and $w(x_i) = Prob(\mathcal{X} = x_i), i = 1, 2, \cdots , n , \cdots$. So, the expected value of $\mathcal{X}$, denoted by $\mathbb{E}(\mathcal{X})$ is defined as $\displaystyle \mathbb{E}[\mathcal{X}]=\sum _{i =1}^{\infty }x_{i}w(x_{i}).$
\end{definition}

Covariance is a statistical measure where you can compare two random variables, allowing you to understand how they relate to each other. 
Therefore, given an invariant probability $\mu$ of a dynamical system $g : M \to M$ and given measurable functions $ \mathfrak{j}, \mathfrak{h} : M \to \R$, we will  analyze the evolution of the covariance $C_n(\mathfrak{j}, \mathfrak{h}) = C(\mathfrak{j} \circ g^n, \mathfrak{h})$ when time $n$ goes to infinity. If $\mathfrak{j} = \mathds{1}_A$ and $\mathfrak{h} = \mathds{1}_B $ are indicator functions, then $\mathfrak{j}(x)$ gives information about the position of the initial point $x $, while $\mathfrak{h}(g^n(x))$ tells us the position of the $n$-th iterate $g^n(x)$.

\begin{definition}
\label{weakly-strong-mix}
Let $g : M \to M$ be a measurable map and let $\mu$ be an invariant probability. The sequence of covariances of two measurable functions $\mathfrak{j}, \mathfrak{h} : M \to \R$ is defined by 
$C(\mathfrak{j}, \mathfrak{h}) = \int \left(\mathfrak{j} \circ g^n \right)\mathfrak{h} \, d\mu - \int \mathfrak{j} \, d\mu  \int \mathfrak{h} \, d\mu \quad ; \quad n \in \mathbb{N}.$
We say that the system $(g, \mu)$ is
\begin{enumerate}
\item [$(i)$]weakly-mixing if $\underset{n \to \infty}{\lim}C_n(\chi_A, \chi_B) = \underset{n \to \infty}{\lim} \frac{1}{n}\overset{n-1}{\underset{i = 0 }{\sum}} \left| \mu \left( g^{-i}(A) \cap B\right)-\mu(A)\mu(B)\right| = 0; $
		\\	
\item [$(ii)$] strong-mixing if $\underset{n \to \infty}{\lim}C_n(\chi_A, \chi_B) = \underset{n \to \infty}{\lim} \mu \left( g^{-n}(A) \cap B\right)-\mu(A)\mu(B) = 0. $
\end{enumerate} 
for any measurable sets $A,B \subset M$. 
\end{definition}
		
									


\begin{thebibliography}{C-S-W}
										
\bibitem{andre1} ANTUNES, A. A.; CARVALHO, T.; Symbolic dynamics of planar piecewise smooth vector fields. Arxiv 2112.02151, (2021).
										
\bibitem{andre2} ANTUNES, A. A.; CARVALHO, T.;  VAR\~{A}O, R.; On topological entropy of piecewise smooth vector fields. Journal of Differential Equations, v. 362, p. 52-73, (2023).

\bibitem{barreira} BARREIRA, L.; IOMMI, G.; Suspension flows over countable Markov shifts. Journal of statistical physics, v. 124, p. 207-230, (2006).
										
\bibitem{bernardo}  BERNARDO,  M.;  BUDD, C. J.;   CHAMPNEYS, A. R.; KOWALCZYK P.; Piecewise-smooth Dynamical Systems: Theory and Applications (Applied Mathematical Sciences, 163). Springer-Verlag London, first edition, (2008).									
										
\bibitem{bcd}  BUZZI, C. A.; CARVALHO, T.; DA SILVA, P.R.; Closed poli-trajectories and poincar\'{e} Index of Non-Smooth Vector Fields on the Plane. J. Dyn. Control Syst. 19, 173--193 (2013).

\bibitem{bmt} BUZZI, C. A.; MEDRADO, J. A. C. R; Teixeira, M. A.; Generic bifurcation of refracted systems, Adv. Math. 234, 653--666, (2013).
										
\bibitem{bce} BUZZI, C. A.; CARVALHO, T.; EUZ\'{E}BIO,  R. D; On Poincar\'{e} - Bendixson theorem and non-trivial minimal sets in planar nonsmooth vector fields. Publ. Mat. 62 (1), 113--131,  (2018). 
										
\bibitem{um}  CARVALHO, T.;  NOVAES, D. D.;  GON\c{C}ALVES, L. F.; Sliding shilnikov connection in filippov-type predatorprey model. Nonlinear Dynamics, 100,, 2973--2987, (2020). 
										
\bibitem{Fi}  FILIPPOV, A. F.; Differential Equations with Discontinuous Righthand Sides, mathematics and its applications. Springer Netherlands, first edition, (1988).

\bibitem{gal} GALLAGER, R. G. Discrete stochastic processes,  LibreTexts, (2024) .
										
\bibitem{gama} GAMARNIK, D.; Extension of the PAC framework to finite and countable Markov chains. In: Proceedings of the twelfth annual conference on Computational learning theory. p. 308-317. (1999). 							
										
\bibitem{dois} GON\c{C}ALVES, L. F.; RODRIGUES, D. S.; MANCERA, P. F. A.; CARVALHO, T.; Sliding mode control in a mathematical model to chemoimmunotherapy: the occurrence of typical singularities. Applied Mathematics and Computation, Volume 387, 124782, (2020).

\bibitem{gure}GUREVICH, B. M.; A variational characterization of one-dimensional countable state Gibbs random filelds. Z.
Wahrsch. Verw. Gebiete, (1984).

\bibitem{gurev} GUREVICH, B. M.; Topological entropy of a countable Markov chain, Dokl. Akad.
Nauk SSSR  (1969).
										
\bibitem{gurevi} GUREVICH, B. M.; Shift entropy and Markov measures in the space of paths of a
countable graph, Dokl. Akad. Nauk SSSR, (1970).

\bibitem{gurevizar} GUREVICH, GUREVICH, B. M.; ZARGARYAN, A. S.; Conditions for the existence of a maximal measure for a countable symbolic Markov chain. Vestnik Moskovskogo Universiteta. Seriya 1. Matematika. Mekhanika, n. 5, p. 14-18, (1988). 

\bibitem{iom} IOMMI, G.; Multifractal analysis for countable Markov shifts. Ergodic Theory and Dynamical Systems, v. 25, n. 6, p. 1881-1907, (2005).

\bibitem{iommi} IOMMI, G.; TODD, M.; VELOZO, A.; Escape of entropy for countable Markov shifts. Advances in Mathematics, v. 405, p. 108507, (2022).
	
\bibitem{peres} LEVIN, D. A.; PERES, Y.; WILMER E. L.; Markov chains and mixing times, Providence, R.I. : American Mathematical Society, (2009).

\bibitem{meyn} MEYN, S. P.; TWEEDIE R. L.; Markov Chains and Stochastic Stability.
New York: Springer-Verlag, (1993).

\bibitem{mt} MEYN, S. P.; TWEEDIE R. L.;Computable bounds for geometric convergence rates of Markov
chains, Ann. Appl. Probab., (1994).

\bibitem{mo} MOHAR, B.; The spectrum of an infinite graph. Linear Algebra Its Appl. Volume 48, 245–256 (1982).		

\bibitem{nova} NOVAES, D. D.; VAR\~{A}O, R.; A note on invariant measures for Filippov systems, Bulletin des Sciences Mathématiques, Volume 167, (2021).

\bibitem{papa} PAPADIMITRAKIS, M. Notes on measury theory.  Department of Mathematics, University of Crete, (2004).

\bibitem{reza} REZAGHOLI, S.; Subshifts on infinite alphabets and their entropy. Entropy, v. 22, n. 11, p. 1293, (2020).

\bibitem{ruet} RUETTE, S.; On the Vere–Jones classification and existence of maximal measures for countable topological Markov chains. Pacific journal of mathematics, v. 209, n. 2, p. 365-380, (2003).

\bibitem{rue} RUETTE, S.; Transitive topological Markov chains of given entropy and period with or without measure of maximal entropy. Pacific Journal of Mathematics, v. 303, n. 1, p. 317-323, (2019).
		
\bibitem{sarig} SARIG, O. M.; Thermodynamic formalism for countable Markov shifts. Ergodic Theory Dynam. Systems, (1999).										
										
\bibitem{soto} SOTOMAYOR J.; Li\c{c}\~{o}es de Equa\c{c}\~{o}es Diferenciais Ordin\'{a}rias. Instituto de Matem\'{a}tica Pura e Aplicada, Rio de Janeiro, (1979).

\bibitem{tobo} Taboga, M.; Markov chain, Lectures on probability theory and mathematical statistics. Kindle Direct Publishing. (2021).
										
\bibitem{viana}	VIANA, M., OLIVEIRA, K.; Fundamentos da Teoria Erg\'{o}dica. SBM, (2014).
										
\bibitem{vere}  VERE-JONES, D.; Geometric ergodicity in denumerable Markov chains, Quart. J.
Math. Oxford, (1962).										
										
\bibitem{walters} WALTERS P.; An introduction to ergodic theory. vol. 79. springer science business media, (2000).
										
\bibitem{willard} WILLARD, S.; General topology. Dover Publications Inc., Mineola, NY,
(2004).
\end{thebibliography}
\end{document}